\newtheorem{prop}{Proposition}
\newtheorem{cor}[prop]{Corollary}
\newtheorem{thm}[prop]{Theorem}
\renewcommand{\phi}{\varphi}
\newcommand{\D}{\mathcal{D}}
\newcommand{\N}{\mathbb{N}}
\newcommand{\cs}{2^\omega}
\newcommand{\fs}{2^{<\omega}}
\newcommand{\MLR}{\ensuremath{\mathsf{MLR}}}
\newcommand{\PA}{\ensuremath{\mathsf{PA}}}
\newcommand{\K}{\ensuremath{\mathrm{K}} }
\newcommand{\C}{\ensuremath{\mathrm{C}} }
\newcommand{\HH}{\ensuremath{\mathrm{H}} }
\newcommand{\Omeg}{\mathrm{\Omega}}
\newcommand{\uh}{\upharpoonright}
\newcommand{\aaa}{\mathbf{a}}
\newcommand{\pp}[2]{\; {| \hspace{-1.7mm} \sim^{#2}_{#1}\, }}
\begin{document}

\title{The axiomatic power of Kolmogorov complexity}
\author{Laurent Bienvenu \inst{1}
\and Andrei Romashchenko \inst{2}
\and Alexander Shen 
\inst{2}
\and Antoine Taveneaux \inst{1}
\and Stijn Vermeeren \inst{3}
}

\date{}
\institute{LIAFA, CNRS \& Universit\'e Paris 7
\and LIRMM, CNRS \& Universit\'e Montpellier 2 
\and University of Leeds
}

\maketitle
\let\eps=\varepsilon

\begin{abstract}
The famous G\"odel incompleteness theorem states that for every consistent sufficiently rich formal theory~$T$ there exist true statements that are unprovable in~$T$. Such statements would be natural candidates for being added as axioms, but how can we obtain them? One classical (and well studied) approach is to add to some theory $T$ an axiom that claims the consistency of~$T$. In this paper we discuss another approach motivated by Chaitin's version of G\"odel's theorem where axioms claiming the randomness (or incompressibility) of some strings are probabilistically added, and show that it is not really useful, in the sense that this does not help us to prove new interesting theorems. This result (cf.~\cite{euler2006}) answers a question recently asked by Lipton~\cite{Lipton-blog}. The situation changes if we take into account the size of the proofs: randomly chosen axioms may help making proofs much shorter (unless NP=PSPACE). This result partially answers the question asked in~\cite{euler2006}. 

We then study the axiomatic power of the statements of type ``the Kolmogorov complexity of $x$ exceeds $n$'' (where $x$ is some string, and $n$ is some integer) in general. They are $\mathrm{\Pi}_1$ (universally quantified) statements of Peano arithmetic. We show (Theorem~\ref{thm:all-complexities}) that by adding all true statements of this type, we obtain a theory that proves all true $\mathrm{\Pi}_1$-statements, and also provide a more detailed classification. In particular, as Theorem~\ref{thm:finite-power} shows, to derive all true $\mathrm{\Pi}_1$-statements it is enough to add one statement of this type for each~$n$ (or even for infinitely many~$n$) if strings are chosen in a special way. On the other hand, one may add statements of this type for most~$x$ of length $n$ (for every $n$) and still obtain a weak theory (Theorem~\ref{a_lot_is_not_enough}). We also study other logical questions related to ``random axioms'' (hierarchy with respect to $n$, Theorem~\ref{thm:generalized-chaitin} in Section~\ref{subsec:axpower}, independence in Section~\ref{subsec:independence}, etc.).

Finally, we consider a theory that claims Martin-L\"of randomness of a given \emph{infinite} binary sequence. This claim can be formalized in different ways. We show that different formalizations are closely related but not equivalent, and study their properties. 
\end{abstract}

\section{Introduction}

We assume that the reader is familiar with the notion of Kolmogorov complexity and Martin-L\"of randomness (See~\cite{LiV2008,Shen-uppsala,DowneyH2010} for background information about Kolmogorov complexity and related topics), but since for our purposes this notion needs to be expressed in formal arithmetic, we recall the basic definitions. The Kolmogorov complexity $\C(x)$ of a binary string $x$ is defined as the minimal length of a program (without input) that outputs $x$ and terminates. This definition depends on a programming language, and one should choose one that makes complexity minimal up to $O(1)$ additive term. Technically, there exist different versions of Kolmogorov complexity. Prefix complexity $\K(x)$ assumes that programs are self-delimiting. We consider plain complexity $\C(x)$ where no such assumptions are made; any partial function $D$ can be used as an ``interpreter'' of a programming language, so $D(p)$ is considered as an output of program $p$, and $\C_D(x)$ is defined as the minimal length of $p$ such that $D(p)=x$. Then some optimal $D$ is fixed (such that $\C_D$ is minimal up to $O(1)$ additive term), and $\C_D(x)$ is called (plain Kolmogorov) complexity of $x$ and denoted $\C(x)$. Most strings of length $n$ have complexity close to $n$. More precisely, the fraction of $n$-bit strings that have complexity less than $n-c$,  is at most $2^{-c}$. In particular, there exist strings of arbitrary high complexity. 

However, as G.~Chaitin pointed out in~\cite{Chaitin1971}, the situation changes if we look for strings of \emph{provably} high complexity. More precisely, we are looking for strings~$x$ and numbers~$n$ such that the statement ``$\C(x)>n$'' (properly formalized in arithmetic; note that this is a $\mathrm{\Pi}_1$ statement) is provable in formal (Peano) arithmetic~\PA. Chaitin noted that there is a constant~$c$ such that \emph{no} statement ``$\C(x)>n$'' is provable in $\PA$ for $n>c$. Chaitin's argument is a version of Berry's paradox: Assume that for every integer $k$ we can find some string $x$ such that ``$\C(x)>k$'' is provable; let $x_k$ be the first string with this property in the order of enumeration of all proofs; this definition provides a program of size $O(\log k)$ that generates $x_k$, which is impossible for large $k$ since ``$\C(x_k)>k$'' is provable in \PA\ and therefore true (in the standard model).\footnote{Another proof of the same result shows that Kolmogorov complexity is actually not very essential here. By a standard fixed-point argument one can construct a program $p$ (without input) such that for every program $q$ (without input) the assumption ``$q$ is equivalent to $p$'' (i.e., $q$ produces the same output as $p$ if $p$ terminates, and $q$ does not terminate if $p$ does not terminate) is consistent with \PA. If $p$ has length $k$, for every $x$ we may assume without contradiction that $p$ produces $x$, so one cannot prove that $\C(x)$ exceeds $k$.}  

This leads to a natural idea. Toss a coin $n$ times to obtain a string $x$ of length~$n$, and consider the statement ``$\C(x)\ge n-1000$''. This statement is true unless we are extremely unlucky. The probability of being unlucky is less than $2^{-1000}$. In  natural sciences we are accustomed to identify this with impossibility. So we can add this statement and be (almost) sure that it is true; if $n$ is large enough, we get a true non-provable statement and could use it as a new axiom. We can even repeat this procedure several times: if the number of iterations $m$ is not astronomically large, $2^{-1000}m$ is still astronomically small.

Now the question: \emph{Can we obtain a richer theory in this way and get some interesting consequences, still being practically sure that they are true}? The answers are given in Section~\ref{sec:pp}: 
\begin{itemize}
\item yes, this is a safe way of enriching \PA\ (Theorem~\ref{correctness});
\item yes, we can get a stronger theory this way (Chaitin's theorem), but
\item no, we cannot prove anything interesting this way (Theorem~\ref{conservation}).
\end{itemize}

So the answer to our question is negative; however, as we show in section~\ref{proof-size} (Theorem~\ref{complexity}), these ``random axioms'' do give some advantages: while they cannot help us to prove new interesting statements, they can significantly shorten some proofs (unless PSPACE=NP).

In Section~\ref{section_finite_string_sequences} we switch to a more general question: what is the axiomatic power of statements ``\,$\C(x)>n$'' for different $x$ and $n$ and how are they related to each other? They are $\mathrm{\Pi}_1$-statements; we show that adding all true statements of the form ``\,$\C(x)>n$'' as axioms, we can prove all true $\mathrm{\Pi}_1$-statements (Theorem~\ref{thm:all-complexities}).\footnote{By ``statements" we mean statements in the language of \PA, and by ``true statements" we mean statements that are true in the standard model of \PA.}

We show that the axiomatic power of these statements increases as $n$ increases, and relate this increase to a classification of $\mathrm{\Pi}_1$-statements by their ``complexity'' (cf.~\cite{CaludeC2009,CaludeC2010}). We show that for some $c$ and for all $n$ one (true) statement $\C(x)>n$ for some string $x$ of length $n$ is enough to prove all true statements $\C(y)>n-c$ (Theorem~\ref{thm:finite-power}), and that all true statements $\C(x)>n$ for given $n$ are not enough to prove any statement $\C(y)>n+c$ (Theorem~\ref{thm:generalized-chaitin}). In other words, the bigger the lower bound for complexity is, the more powerful the axioms we get.

This result is rather fragile. First, the choice of a string $x$ is important: for other strings of length $n$ this property is not true (even if we add many axioms at the same time, see Theorem~\ref{a_lot_is_not_enough}). Also the precision of the complexity information is crucial (Theorem~\ref{thm:half-complexity-not-enough}).

Then we show that a random choice of several random axioms (about complexities) with high probability leads to independent statements (any combination of these statements with or without negations is consistent with \PA, Theorem~\ref{thm:independence}).

Many of the results mentioned are closely related to corresponding statements in computation theory. Theorem~\ref{thm:strange-disjunction} shows, however, that such a correspondence does not necessarily works.

The results mentioned above deal with unconditional complexity. Some of them trivially generalize to conditional complexity, but sometimes the situation changes. In Theorem~\ref{thm:cc} we show that there exists some $c$ such that every true $\mathrm{\Pi}_1$-statement can be derived in \PA\ from true statements of the form $\C(x|y)>c$. (Here $\C(x|y)$ is the length of the shortest program that maps $y$ to $x$.) 


Finally, in Section~\ref{section_martin_lof_random} we switch from finite strings to infinite sequences and consider theories that say (in some exact way) that a given infinite binary sequence $X$ is Martin-L\"of random.

\section{Probabilistic proofs in Peano arithmetic}\label{sec:pp}
\subsection{Random axioms: soundness}\label{sec:prob-proof-strategy}

Let us describe more precisely how we generate and use random axioms. Assume that some initial ``capital'' $\eps$ is fixed. Intuitively, $\eps$ measures the maximal probability that we agree to consider as ``negligible''. 

\medskip

\noindent
\textbf{The basic version:} Let $c$ be an integer such that $2^{-c}<\eps$ and $n$ an integer. We choose at random (uniformly) a string $x$ of length $n$, and add the statement ``\,$\C(x)\ge n-c$'' to $\PA$ (so it can be used together with usual axioms of \PA).\footnote{As usual, we should agree on the representation of Kolmogorov complexity function~$\C$ in \PA. We assume that this representation is chosen in some natural way, so all the standard properties of Kolmogorov complexity are provable in \PA. For example, one can prove in \PA\ that the programming language used in the definition of $\C$ is universal. The correct choice is especially important when we speak about proof lengths (Section~\ref{proof-size}). }

\medskip
\noindent
\textbf{A slightly extended version:} We fix several numbers $n_1,\ldots,n_k$ and $c_1,\ldots,c_k$ such that
$2^{-c_1}+\ldots+2^{-c_k}<\eps$. Then we choose at random strings $x_1,\ldots,x_k$ of length $n_1,\ldots,n_k$,
and add all the statements ``\,$\C(x_i)\ge n-c_i$'' for $i=1,\ldots,k$. 

\medskip
\noindent
\textbf{Final version:} In fact, we can allow an even more flexible procedure of adding random axioms that does not mention Kolmogorov complexity explicitly. Assume that we have already proved for some finite set $A$ of natural numbers, for some rational $\delta>0$ and for some property $R(x)$ (an arithmetical formula with one free variable $x$) that \emph{the proportion of members of $A$ such that $\lnot R(n)$ is at most $\delta$}. We denote the latter statement by $(\forall_\delta x\in A)R(x)$ in the sequel. This statement can be written in \PA\ in a natural way. We assume here that $A$ is represented by some formula $A(x)$; for each $n\in A$ the formula $A(\bar n)$ is provable in $\PA$; for each $n\notin A$ the formula $\lnot A(\bar n)$ is provable in $\PA$; finally, we assume that the cardinality of $A$ is known in $\PA$ (the corresponding formula is provable).

Then we are allowed to pick an integer $n$ at random inside $A$ (uniformly), and add the formula $R(\bar n)$ as a new axiom. This step can be repeated several times. We have to pay $\delta$ for each operation until the initial capital $\eps$ is exhausted; different operations may have different values of $\delta$. Note that the axiom added at some step can be used to prove the cardinality bound at the next steps.\footnote{Actually this is not important: we can replace previously added axioms by conditions. Assume for example that we first proved $(\forall_\delta x\in A) R(x)$, then for some randomly chosen $n$ we used $A(n)$ to prove $(\forall_\tau y \in B) S(y)$, and finally we added $S(m)$ for some random $m\in B$. Instead, we can prove without additional axioms the statement $(\forall_\tau y\in B)\, (A(n)\to S(y))$, and add $A(n)\to S(m)$, which gives the same result, since $A(n)$ is added earlier in the proof.}

Our previous examples are now special cases: $A$ is the set of all $n$-bit strings (formally speaking, the set of corresponding integers, but we identify them with strings in a natural way), the formula $R(x)$ says that $\C(x)\ge n-c$, and $\delta=2^{-c}$. 

\smallskip
In this setting we consider \emph{proof strategies} instead of proofs. Normally, a proof is a sequence of formulas where each next formula is either an axiom or is obtained from previous ones by some inference rule. Now we have also random steps where we go from the formula $(\forall_\tau x\in A) R(x)$ to some $R(n)$ for randomly chosen $n\in A$. Formally, a proof strategy is a finite tree whose nodes are labeled by pairs $(T,\delta)$, where $T$ is a set of formuli (obtained so far) and $\delta$ is a rational between $0$ and $1$ (the capital at that stage of the process). Nodes can be of two types:

\begin{itemize}
\item Deterministic nodes only have one child. If $(T,\delta)$ is the label of a deterministic node, the label $(T',\delta')$ of its child has $\delta'=\delta$ (no capital is spent), and $T'=T \cup \{\psi\}$ where $\psi$ is some statement which can be obtained from~$T$ using some inference rule.  
\item Probabilistic nodes have several children. If $(T,\delta)$ is a probabilistic node, the set $T$ contains a formula $(\forall_\tau n \in A)\, R(n)$ where $\tau\le\delta$, the node has as many children as elements of $A$, and the labels of the children are $(T \cup \{R(n)\},\delta-\tau)$ where $n$ ranges over~$A$.   
\end{itemize}

The root of the tree has label $(\PA,\varepsilon)$, where $\PA$ is the set of axioms for Peano arithmetic, and $\varepsilon$ is the initial capital.

We will often identify a proof strategy $\pi$ with the corresponding probabilistic process that starts from the root of the tree, and at every probabilistic node chooses uniformly a child of the current node, until a leaf is reached. The logical theory~$T$ built at the end of the process (when a leaf is reached) is, in this context, \emph{a random variable}. We call it \emph{the theory built by~$\pi$}. 

\begin{figure}[h]
\begin{center}
     \includegraphics{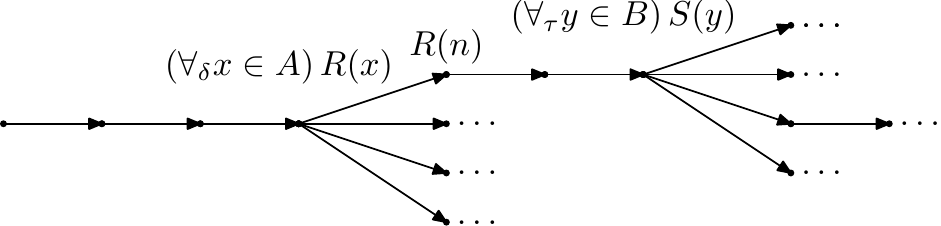}
\end{center}
\caption{A proof strategy represented as a tree}\label{proof-tree}
\end{figure}

Given a proof strategy $\pi$ and a formula $\phi$, we consider the probability that~$\phi$ is provable by $\pi$, i.e., the total probability of all leaves where $\phi$ appears. 

One can consider also a compressed version of the proof where we omit all non-branching steps. In this version each vertex is labeled with some theory; the root has label \PA; at a non-leaf vertex some formula $(\forall_\delta x\in A)R(x)$ is provable in the corresponding theory, and at its sons the formulas $R(n)$ for different $n\in A$ are added to that theory. The probability of $\varphi$ being provable by this strategy is the probability of leaves where $\varphi$ is provable.

We can also inductively define the relation $T  \pp{p}{\varepsilon} \phi$ which means that there exists a randomized proof starting from $T$ with capital $\varepsilon\ge 0$ that proves $\phi$ with probability at least $p$. The inductive steps are:

\[
\frac{T \vdash \phi}{T  \pp{p}{\varepsilon} \phi}\quad 
\text{for every $\varepsilon\ge 0$ and $p\in [0,1]$;}\quad
\frac{}{T  \pp{0}{\varepsilon} \phi}\quad
\text{for every $\varepsilon\ge 0$;}
\]

\[
\frac{%
T \vdash (\forall_\delta x\in A)\,R(x)
\qquad 
\left\{T,R(\bar n) \pp{p_n}{\varepsilon-\delta} \phi \right\}_{n \in A}}
{T  \pp{p}{\varepsilon} \phi}
\quad
\text{if $\delta\le\varepsilon$, $0\le p\le \sum_{n\in A}p_i /\#A$}
\]

It is easy to see that this inductive definition is equivalent to the original one: \emph{$T  \pp{p}{\varepsilon} \varphi$ if and only if there exists a proof strategy with initial capital $\varepsilon$ that proves $\varphi$ starting from $T$ with probability $p$ or more}. 
Indeed, if $T\vdash \varphi$, then every capital $\varepsilon\ge0$ is enough to prove $\varphi$, we do not need randomized steps; everything is provable with probability at least $0$; if $T \vdash (\forall_\delta x\in A)\,R(x)$, then we can start the proof from $T$ using a randomized step, and the probability to prove $\varphi$ is the average of the probabilities $p_n$ to prove it starting from the enlarged theories using the remaining capital $\varepsilon-\delta$. 

On the other hand, having a proof strategy, we may use the backward induction (from leaves to the root) to establish the $\pp{p}{\varepsilon} \varphi$-relation for all the nodes of the tree (for current capital $\varepsilon$ and the probability $p$ to prove $\varphi$ starting from the current vertex, or any smaller number).

Having all these equivalent definitions, we nevertheless consider the first definition of a proof strategy as the main one. This is important when we speak about the length of the proof. Note also that we do not assume here that a proof strategy is effective  in any sense. 

The following theorem says that this procedure can indeed be trusted:

\begin{thm}[soundness]\label{correctness}
Let~$\pi$ be a proof strategy with initial capital~$\delta$. The probability that the theory~$T$ built by~$\pi$ contains some false statement is at most~$\delta$. 
\end{thm}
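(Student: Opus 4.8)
The plan is to prove a slightly stronger statement by backward induction on the tree underlying the proof strategy. For a node $v$ with label $(T,\delta)$, let $q(v)$ denote the probability that the (random) theory built by the process started at $v$ contains a false statement. I would establish the invariant: \emph{if every formula of $T$ is true in the standard model, then $q(v)\le\delta$.} Applying this to the root, whose label is $(\PA,\varepsilon)$ and whose theory consists only of the (true) axioms of Peano arithmetic, yields exactly the assertion of the theorem. The induction runs from the leaves up to the root, on the height of the subtree rooted at $v$.

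For the base case, a leaf, the built theory is $T$ itself, which by the hypothesis of the invariant contains only true statements, so $q(v)=0\le\delta$. For a deterministic node with child $v'$ labelled $(T\cup\{\psi\},\delta)$, the formula $\psi$ is obtained from $T$ by an inference rule; since first-order logic is sound and all formulas of $T$ are true, $\psi$ is true as well, so the invariant hypothesis holds at $v'$. As the process passes deterministically from $v$ to $v'$, we have $q(v)=q(v')\le\delta$ by the induction hypothesis.

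The heart of the argument --- and the step I expect to be the only delicate one --- is the probabilistic node. Here $T$ contains a formula $(\forall_\tau n\in A)\,R(n)$ with $\tau\le\delta$, and the children $v_n$ (for $n\in A$) are labelled $(T\cup\{R(\bar n)\},\delta-\tau)$, each chosen with probability $1/\#A$. The key point is to split $A$ according to the truth of $R$. Because $(\forall_\tau n\in A)\,R(n)$ lies in $T$ and is therefore true, its meaning (the proportion of $n\in A$ with $\lnot R(n)$ is at most $\tau$) gives $\#\{n\in A: R(\bar n)\text{ false}\}\le\tau\cdot\#A$. For a ``bad'' $n$ (with $R(\bar n)$ false) the child theory already contains a false statement, so every theory built below it does too and $q(v_n)\le 1$. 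For a ``good'' $n$ (with $R(\bar n)$ true) the hypothesis of the invariant holds at $v_n$ with capital $\delta-\tau\ge 0$, so $q(v_n)\le\delta-\tau$ by induction. Averaging over the children and bounding the bad fraction by $\tau$,
\[
q(v)=\frac{1}{\#A}\sum_{n\in A}q(v_n)\le \frac{\#\{n:\,R(\bar n)\text{ false}\}}{\#A}+(\delta-\tau)\le \tau+(\delta-\tau)=\delta,
\]
which closes the induction. The only facts used beyond bookkeeping are the soundness of ordinary first-order inference and the fact that the arithmetical formalization of $(\forall_\tau n\in A)\,R(n)$, when true, really does bound the fraction of counterexamples by $\tau$; both are guaranteed by the conventions fixed in the setup.
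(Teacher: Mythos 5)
Your proposal is correct and follows essentially the same backward-induction argument as the paper: the paper's invariant ``either the node is bad, or it is good and the probability of reaching a bad node is at most $\delta$'' is just a restatement of your conditional invariant on $q(v)$, and the treatment of deterministic and probabilistic nodes is identical. The only cosmetic difference is in the final estimate at a probabilistic node, where the paper uses the slightly sharper bound $\tau+(1-\tau)(\delta-\tau)\le\delta$ while you use $\tau+(\delta-\tau)=\delta$; both suffice.
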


This theorem has the following immediate corollary. 

\begin{cor}\label{soundness-corollary}
Let $\phi$ be some arithmetical statement.
If the probability to prove~$\phi$ for a proof strategy $\pi$ with initial capital $\eps$ is greater than $\eps$, then $\phi$ is true. In other words, $\mathrm{PA}\pp{p}{\varepsilon}\varphi$ for $p>\varepsilon$ implies that $\varphi$ is true.
\end{cor}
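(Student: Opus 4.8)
The plan is to prove the statement by backward induction on the finite tree underlying $\pi$, establishing a slightly stronger localized invariant. For a node $v$ with label $(T,\delta)$, say that $v$ is \emph{reached soundly} if every formula in $T$ is true in the standard model. The invariant I would prove is: \emph{if $v$ is reached soundly, then the probability, conditioned on the process being at $v$, that it eventually reaches a leaf whose theory contains a false statement is at most $\delta$} (the remaining capital recorded at $v$). Since the root is labeled $(\PA,\varepsilon)$ and every axiom of $\PA$ is true, applying the invariant at the root yields exactly the conclusion of the theorem.

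For the base case, a leaf reached soundly carries an all-true theory, so the probability of a false statement there is $0$, at most its nonnegative capital. For a deterministic node $v$ with child $v'$, the added formula $\psi$ is obtained from $T$ by an inference rule; by soundness of first-order logic (and of the $\PA$ axioms), if $T$ is all-true then so is $T\cup\{\psi\}$, hence $v'$ is also reached soundly and carries the same capital, and the inductive hypothesis transfers verbatim.

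The crux is the probabilistic node $v$, labeled $(T,\delta)$, at which $T\vdash(\forall_\tau x\in A)R(x)$ with $\tau\le\delta$ and whose children $v_n$ ($n\in A$) carry $(T\cup\{R(\bar n)\},\delta-\tau)$. Assume $v$ is reached soundly. The key step is that the provable statement $(\forall_\tau x\in A)R(x)$ is then \emph{true}, so the genuine proportion of $n\in A$ for which $R(\bar n)$ is false is at most $\tau$. Call these the \emph{bad} children and the rest \emph{good}. A good child is reached soundly (we adjoin a true $R(\bar n)$ to an all-true theory), so by induction its conditional failure probability is at most $\delta-\tau$; a bad child already contains the false statement $R(\bar n)$, which persists to every descendant leaf since theories only grow, so its failure probability is at most $1$. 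Averaging uniformly over the $\#A$ children and writing the proportion of bad ones as $\beta\le\tau$, the failure probability at $v$ is at most $(1-\beta)(\delta-\tau)+\beta\le(\delta-\tau)+\tau=\delta$, closing the induction.

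The one point requiring genuine care — and the main obstacle — is the claim that a true statement $(\forall_\tau x\in A)R(x)$ really bounds the counting proportion of bad members by $\tau$. This is exactly where the formalization hypotheses on $A$ are used: that $A(\bar n)$ is provable for $n\in A$, that $\lnot A(\bar n)$ is provable for $n\notin A$, and that $\#A$ is provably correct, so that the arithmetical sentence $(\forall_\tau x\in A)R(x)$ faithfully expresses the finite counting assertion in the standard model. Once this translation is pinned down, soundness of $\PA$ (every provable sentence is true) converts provability of $(\forall_\tau x\in A)R(x)$ into the actual combinatorial bound, and the induction above goes through.
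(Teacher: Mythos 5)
Your proposal is correct and follows essentially the same route as the paper: the backward induction with the invariant ``if a node is reached soundly, the conditional probability of reaching a node containing a false statement is at most its remaining capital'' is exactly the paper's proof of the soundness theorem (Theorem~\ref{correctness}), from which the corollary is immediate. The only step you leave implicit is the final observation that if $\phi$ were false, every leaf at which $\phi$ is proved would be a ``bad'' leaf, so the hypothesis $p>\varepsilon$ would contradict the invariant at the root; this is trivial and matches the paper's own one-line derivation of the corollary from the theorem.
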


\begin{proof}[of Theorem~\ref{correctness}]
This theorem is intuitively obvious because a false statement appears only if one of the ``bad events'' has happened (a false axiom was selected at some step), and the sum of probabilities of all bad events that happen along some branch is bounded by $\varepsilon$. However, this argument cannot be understood literally since different branches have different bad events (after the first branching point). 

So let us be more formal and say that a node of label $(T,\delta)$ is \emph{good} if all statements in $T$ are true and \emph{bad} otherwise. We prove the following property by backward induction: $(\clubsuit)$ if a node has a label~$(T,\delta)$, either (a) it is bad, or (b) it is good and the probability that, starting from that node, one will reach a bad node is smaller or equal to~$\delta$. Backward induction means that we prove this property for the leaves (base case) and then prove that if it holds for all children of a node, it also holds for that node. It follows immediately that the property holds for all nodes of the tree, hence it holds at the root, which is what we want (since the root is good).

Here the base case is immediate: all leaves have the property~$(\clubsuit)$; there is nothing to prove. Now suppose that we have a node~$u$ of label $(T,\delta)$ such that all of its children have the property~$(\clubsuit)$. If $u$ is a deterministic node, there is again nothing to prove, as it is easy to see from the definition that a deterministic node has the~$(\clubsuit)$ property if and only if its child does. If~$u$ is probabilistic, let $(\forall_\tau x \in A) R(x)$ be the formula in~$T$ associated to the probabilistic choice at node~$u$. If $u$ is bad, we are done (the property~$(\clubsuit)$ automatically holds at $u$), so let us assume it is good. This means in particular that the formula $(\forall_\tau x \in A) R(x)$ is true, which means that~$u$ has a most a fraction $\tau$ of bad children. By the induction hypothesis, starting from any good child of $u$, the probability to reach a bad node is at most $\delta-\tau$. Thus the probability to reach a bad node starting from~$u$ is at most
\[
\tau + (1-\tau)(\delta-\tau) = \delta - \tau(\delta-\tau) \leq \delta
\]
(the last inequality uses the fact that $\delta	\geq \tau$).\qed
\end{proof} 

\subsection{Random axioms are not useful}

As Chaitin's theorem shows, there are proof strategies that with high probability lead to \emph{some} statements that are true but non-provable (in \PA). However, the situation changes if we want to get some \emph{fixed} statement, as the following theorem  shows:

\begin{thm}[conservation]\label{conservation}
Let $\phi$ be some arithmetical statement.
If the probability to prove $\phi$ for a proof strategy $\pi$ with initial capital $\eps$ is greater than $\eps$, then $\phi$ is provable \textup(in \PA\ without any additional axioms\textup).
\end{thm}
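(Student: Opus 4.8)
The plan is to prove the contrapositive by relativizing the soundness argument (Theorem~\ref{correctness}) from the standard model to a nonstandard one. Suppose $\phi$ is not provable in \PA. Since \PA\ is consistent, so is $\PA \cup \{\lnot\phi\}$, and I would fix a model $M \models \PA \cup \{\lnot\phi\}$. The goal is then to show that the probability that $\pi$ proves $\phi$ is at most $\eps$, which contradicts the hypothesis that this probability is greater than $\eps$.

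The key observation I would rely on is that the finite set $A$ used at each probabilistic step is genuinely finite with a cardinality provable in \PA, and that membership in $A$ is decided by \PA\ (the formula $A(\bar n)$ is provable for $n \in A$ and $\lnot A(\bar n)$ for $n \notin A$). Consequently $M$ agrees with the standard model on $\#A$ and on which numerals belong to $A$: no nonstandard element of $M$ satisfies $A(x)$. Hence at each probabilistic node the branching inside $M$ ranges over exactly the same standard finite set as in the original tree, and the notion of ``fraction of children'' is unchanged when we pass to $M$.

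Next I would rerun the backward induction of Theorem~\ref{correctness} almost verbatim, but calling a node of label $(T,\delta)$ \emph{$M$-good} if every formula in $T$ is true in $M$ (rather than in the standard model), and \emph{$M$-bad} otherwise. The root is $M$-good because $M \models \PA$. At an $M$-good deterministic node the added formula is obtained from $T$ by a sound inference rule, so it too is true in $M$ and the child stays $M$-good. At an $M$-good probabilistic node the formula $(\forall_\tau x \in A)\,R(x)$ lies in $T$, hence is true in $M$; this means at most a fraction $\tau$ of the elements $n \in A$ satisfy $M \models \lnot R(\bar n)$, so at most a fraction $\tau$ of the children are $M$-bad while the remaining children stay $M$-good. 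The same estimate $\tau + (1-\tau)(\delta-\tau) \le \delta$ then shows, by induction from the leaves, that starting from any $M$-good node the probability of reaching an $M$-bad leaf is bounded by its capital $\delta$; in particular this probability is at most $\eps$ at the root.

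It remains to connect $M$-bad leaves to the event of proving $\phi$, and this is the one genuinely new step. If a leaf with theory $T$ proves $\phi$, then $T \vdash \phi$; since first-order deduction is sound and $M \models \lnot\phi$, the theory $T$ cannot be entirely true in $M$, so that leaf is $M$-bad. Therefore the set of leaves at which $\phi$ is provable is contained in the set of $M$-bad leaves, whence the probability that $\pi$ proves $\phi$ is at most $\eps$ — the desired contradiction. I expect the only delicate point to be the bookkeeping at probabilistic nodes: one must remember that the premise $(\forall_\tau x \in A)\,R(x)$ is only guaranteed as a \emph{member of $T$} (it may have been derived using earlier random axioms) rather than as a theorem of \PA. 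This is precisely why relativizing to $M$ and carrying the $M$-goodness hypothesis through the induction is essential, instead of trying to argue about provability in \PA\ directly.
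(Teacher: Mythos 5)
Your argument is correct, but it takes a genuinely different route from the paper's. The paper stays entirely on the syntactic side: it calls a node \emph{strong} when its theory $T$ proves $\phi$, and shows that a weak probabilistic node has at most a $\tau$-fraction of strong children, because if children $x_1,\dots,x_t$ with $t>\tau\#A$ were all strong, then $T\vdash [R(x_1)\lor\dots\lor R(x_t)]\to\phi$, and the cardinality bound $(\forall_\tau x\in A)R(x)$ available in $T$ forces the disjunction, hence $T\vdash\phi$. You instead invoke the completeness theorem to get a model $M\models\PA\cup\{\lnot\phi\}$ and rerun the soundness induction with ``true'' replaced by ``true in $M$''; the absoluteness of $A$, of $\#A$, and of counting definable subsets of a standard finite set (which you correctly justify from the paper's assumptions that $A(\bar n)$, $\lnot A(\bar n)$ and the cardinality of $A$ are all settled by \PA) makes the fraction of $M$-bad children at most $\tau$, and a leaf proving $\phi$ is necessarily $M$-bad. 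Both proofs are sound; yours is arguably more unified, since Theorem~\ref{correctness} becomes the special case where $M$ is the standard model, and conservation is just soundness relativized to an arbitrary model. What the paper's syntactic version buys, and yours does not, is an explicit procedure that converts a successful probabilistic strategy into an actual \PA-derivation of $\phi$ by assembling the disjunctions; this derivation is exponentially long in general, which is exactly the observation that motivates Section~\ref{proof-size} and Theorem~\ref{complexity}. Your model-theoretic argument establishes provability non-constructively and yields no bound on proof length, so it could not serve as the starting point for that discussion.
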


Formally, Theorem~\ref{conservation} is a stronger version of Corollary~\ref{soundness-corollary}, but the message here is quite different: Corollary~\ref{soundness-corollary} is the good news (probabilistic proof strategies are safe) whereas Theorem~\ref{conservation} is the bad news (probabilistic proof strategies are useless). 

\begin{proof}
Let $\phi$ be a fixed statement that is not provable in~$\PA$. We say that a node $u$ is \emph{strong} if it has label $(T,\delta)$ with $T \vdash \phi$, and is \emph{weak} otherwise. We again use a proof by induction, and we prove that each node~$u$ has the property $(\diamondsuit)$: either (a) $u$ is strong or (b) $u$ is not strong and the probability that starting from~$u$ one hits a strong node~$v$ is bounded by $\delta$. 

Again, the fact that leaves all have the property ($\diamondsuit$) is immediate. Now suppose that we have a node~$u$ of label $(T,\delta)$ such that all of its children have the property~$(\diamondsuit)$. If $u$ is a deterministic node, there is again nothing to prove, as it is easy to see from the definition that a deterministic node has the~$(\diamondsuit)$ property if and only if its child does. If~$u$ is probabilistic, let $(\forall_\tau x \in A) R(x)$ be the formula in~$T$ associated to the probabilistic choice at node~$u$. If $u$ is strong, we are done (the property~$(\diamondsuit)$ automatically holds at $u$), so let us assume it is not. Let $p$ be the probability that one hits a strong node starting from~$u$. Let us show that the fraction of strong nodes among the children of $u$ is at most $\tau$. Let $x_1,\ldots,x_t$ be the elements of $A$ corresponding to the strong children of~$u$.  For each $i=1,\ldots, t$ we have $T \cup R(x_i)\vdash \phi$, or equivalently $T \vdash R(x_i)\rightarrow \phi$ by definition of strong vertices. Therefore,
\[
T \vdash  [R(x_1)\lor R(x_2)\lor\ldots\lor R(x_t)] \rightarrow \phi.
\]
If $t > \tau \#A$, this fact, together with the assumption that $(\forall_\tau x \in A) R(x)$, entails $T \vdash \phi$, so we get a contradiction. Thus the fraction of strong children of~$u$ is at most~$\tau$. Moreover, for any weak child~$v$ the induction hypothesis tells us that the probability to hit a strong node starting from~$v$ is at most $\delta-\tau$. Thus the total probability to hit a strong node starting from~$u$ is at most $$\tau + (1-\tau)(\delta-\tau)  \leq \delta.$$
\qed
\end{proof}

\subsection{Polynomial size proofs}\label{proof-size}

The situation changes drastically if we are interested in the length of proofs. The argument used in Theorem~\ref{conservation} gives an exponentially long ``conventional'' proof compared with the original ``probabilistic'' proof, since we need to combine the proofs for all terms in the disjunction. (Here the length of a probabilistic proof strategy is measured as the length of the longest branch; note that the total size of the proof strategy tree may be exponentially larger.) Can  we find another construction that transforms probabilistic proof strategies into standard proofs with only polynomial increase in length? Probably not; some reason for this is provided by the following Theorem~\ref{complexity}.

\begin{thm}\label{complexity}
If every probabilistic proof strategy $\pi$ can be transformed into a deterministic proof whose length is polynomial in the length of~$\pi$, then the complexity classes $\mathrm{PSPACE}$ and $\mathrm{NP}$ coincide.
\end{thm}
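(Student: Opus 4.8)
The plan is to establish the nontrivial inclusion $\mathrm{PSPACE}\subseteq\mathrm{NP}$ (the reverse inclusion being standard) by turning the hypothetical transformation into a source of short $\mathrm{NP}$-certificates. Fix a $\mathrm{PSPACE}$-complete language $L$, for instance the set of true quantified Boolean formulas, and let $\phi_w$ be the natural arithmetical statement ``$w\in L$''. Since membership in $L$ is decidable, $\phi_w$ is true exactly when $w\in L$, and in that case it is provable in $\PA$. The heart of the argument is a construction: for every yes-instance $w$ I want to build a probabilistic proof strategy $\pi_w$ whose length (the depth of its longest branch) is polynomial in $|w|$ and which proves $\phi_w$ with probability close to $1$. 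Granting this, the assumed transformation produces a deterministic $\PA$-proof of $\phi_w$ of length polynomial in the length of $\pi_w$, hence polynomial in $|w|$. A deterministic proof exists in any case (the construction behind Theorem~\ref{conservation} already extracts one), but a priori only of exponential length; the hypothesis is precisely what makes it short. Such a short proof is exactly an $\mathrm{NP}$-witness for $w\in L$: a verifier guesses it and checks in polynomial time that it is a valid $\PA$-derivation of $\phi_w$. Soundness of $\PA$ rules out false positives, so no-instances (for which $\phi_w$ is false, hence unprovable) are correctly rejected. This gives $L\in\mathrm{NP}$ and therefore $\mathrm{PSPACE}=\mathrm{NP}$.

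It remains to produce the strategies $\pi_w$, and here I would try to encode the recursive structure of a $\mathrm{PSPACE}$ computation through the two kinds of nodes. The deterministic nodes are meant to carry the ``existential'' information that a prover can supply and that $\PA$ can verify locally with a short derivation: for example the low-degree polynomials produced by an arithmetisation of the quantified formula in the style of the sum-check protocol, together with the associated consistency equations, each of which is a finite computation. The probabilistic nodes are meant to play the role of the verifier's random challenges: at each such node one picks a random field element and adjoins the corresponding reduced claim as a new axiom, paying a small fraction of the capital $\eps$ for the Schwartz--Zippel error. If this can be arranged so that along almost every branch the adjoined claims, combined with the polynomially checkable consistency conditions, force $\phi_w$ in $\PA$, then $\pi_w$ proves $\phi_w$ with probability at least $1-\eps$, as required, and Theorem~\ref{correctness} keeps the whole scheme sound.

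The step I expect to be the main obstacle is precisely this last requirement, where the logical and the probabilistic notions of soundness collide. The proof-strategy formalism is unforgiving in two ways: the theory sitting at a leaf must \emph{logically entail} $\phi_w$, and each probabilistic node must be licensed by a premise $(\forall_\tau x\in A)R(x)$ that has already been \emph{proved}, in polynomial length, from the axioms accumulated so far. A naive transcription of an interactive protocol fails on both counts: a single accepting transcript is only probabilistically convincing, not a logical proof of the statement, and the polynomial identity asserting that a committed polynomial equals the true partial sum is as hard to prove as the subproblem it summarises, so it cannot serve as a short premise. The real work is therefore to choose the statement family and the challenged predicates $R$ so that, on one hand, each ``for most'' premise is a genuine Schwartz--Zippel or counting fact admitting a short $\PA$-proof relative to the earlier axioms, and, on the other hand, the challenges collected along a single branch already pin down the low-degree objects at enough points to entail $\phi_w$ outright. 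Charging the unavoidable error of the second point to the capital, rather than to logical validity, is the delicate design problem, and it is what must be engineered carefully for the reduction to go through.
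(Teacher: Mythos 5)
Your outer reduction is exactly the paper's: fix TQBF, build for each true formula a polynomial-length probabilistic proof strategy proving it with high probability, invoke the hypothesized compression to get a polynomial-size ordinary \PA-proof, and observe that such a proof is an $\mathrm{NP}$-witness (soundness of \PA\ handling the no-instances). That part is correct and needs no change. The problem is that you explicitly leave the central construction --- the strategies $\pi_w$ themselves --- as an acknowledged obstacle (``it is what must be engineered carefully for the reduction to go through''), and the two difficulties you name there are real, so as written the argument has a genuine gap: you have not exhibited the polynomial-length strategies whose existence the whole proof rests on.

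The missing idea, which is the one substantive point of the paper's proof, is that the randomly adjoined axiom should be an \emph{implication}, not a value claim. After arithmetization one wants to establish $\forall x\,(P(x)=\bar P(x))$, where $P$ is the polynomial implicitly defined by the protocol and $\bar P$ is the explicit low-degree polynomial supplied by the prover. The premise licensing the probabilistic step is: for most field elements $r$, the implication $[P(r)=\bar P(r)]\rightarrow \forall x\,(P(x)=\bar P(x))$ holds. This is a pure Schwartz--Zippel counting fact about low-degree polynomials over a large finite field, and --- crucially --- it is provable in \PA\ by a short proof \emph{whether or not} $P$ and $\bar P$ actually coincide; it does not require proving the partial-sum identity it summarizes, which is exactly the trap you were worried about. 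The random step then adds the implication for a random $r$, reducing the task to certifying $P(r)=\bar P(r)$, which is handled by the next round; at the bottom of the recursion the remaining claim is a concrete finite-field evaluation, provable in \PA\ by direct computation. At every good leaf the chain of adjoined implications together with that terminal identity \emph{logically entails} $\phi_w$, so the leaf theory proves $\phi_w$ outright; the bad $r$'s at each round are precisely what is charged to the capital $\eps$. This simultaneously answers both of your objections: the ``for most'' premises are short counting lemmas, and a single branch does yield a logical (not merely statistically convincing) derivation. With this construction supplied, your proof coincides with the paper's.
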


\begin{proof}
It is enough to consider a standard $\mathrm{PSPACE}$-complete language, the language TQBF of true quantified Boolean formulas. A standard interactive proof for this language (see, e.g., \cite{Sipser1996}; we assume that reader is familiar with that proof) uses an Arthur--Merlin protocol where  the verifier (Arthur) is able to perform polynomial-size computations and generate random bits that are visible to the prover (Merlin) but cannot be corrupted by him. The correctness of this scheme is based on simple properties of finite fields. This kind of proof can be easily transformed into a successful probabilistic proof strategy in our sense. To explain this transformation, let us recall some details. 

Assume that a quantified Boolean formula $\phi$ starting with a universal quantifier is given. This formula is transformed into a statement that says that for some polynomial $P(x)$ two values $P(0)$ and $P(1)$ are equal to $1$. This polynomial is implicitly defined by a sequence of operations. To convince Arthur that it is indeed the case (i.e., that $P(0)=P(1)=1$), Merlin shows $P$ to Arthur (listing explicitly its coefficients; there are polynomially many of them). In other terms, Merlin notes that the formula $$[\forall x (P(x)=\bar{P}(x))]\rightarrow [P(0)=1 \land P(1)=1]$$ and therefore $$[\forall x(P(x)=\bar{P}(x))]\rightarrow \phi$$ are true (and provable in \PA). Here $P(x)$ is the polynomial $P$ defined as the result of the sequence of operations, while $\bar{P}(x)$ is the explicitly given expression for~$P$. Then Merlin notes that the implication $$[P(r)=\bar{P}(r)]\rightarrow \forall x(P(x)=\bar{P}(x))$$ is true for most elements $r$ of the finite field and this fact is provable in \PA\ (using basic results about finite fields), so such an implication can be added as a new axiom, and it remains to convinve Arthur that $P(r)=\bar P(r)$. Continuing in this way, we get a probabilistic proof strategy that mimics the interactive proof protocol for TQBF.\footnote{This argument assumes that our version of \PA\ allows us to convert the argument above into a polynomial-size randomized proof. We do not go into these technical details here.}

This construction gives for each TQBF a probabilistic proof strategy (in the sense of Section~\ref{sec:prob-proof-strategy}) of polynomial length that uses some small initial capital $\varepsilon$. Assume that any probabilistic proof strategy can be transformed into a conventional proof in \PA\ of polynomial size. Then this conventional proof is a $\mathrm{NP}$-witness for TQBF, so $\mathrm{PSPACE}=\mathrm{NP}$. \qed
\end{proof}

\section{Non-randomly chosen axioms}\label{section_finite_string_sequences}

\subsection{Full information about complexities and its axiomatic power}

In this section we study in general the axiomatic power of the axioms ``$\C(x)>n$". Let us start with a simple question: assume that we add to \PA\ all true statements of this form as axioms. What theory do we get? Note that these statements are $\mathrm{\Pi}_1$-formulas, so their negations are existential formulas and are provable when true. So with these axioms we have the full information about Kolmogorov complexity of every binary string. The axiomatic power of this information has a simple description:

\begin{thm}\label{thm:all-complexities}
If one adds to $\PA$ all true statements of the form ``\,$\C(x)>n$", the resulting theory proves all true $\mathrm{\Pi}_1$-statements.
\end{thm}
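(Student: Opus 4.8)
The plan is to show that every true $\mathrm{\Pi}_1$-statement $\forall n\, \psi(n)$ (with $\psi$ a decidable predicate) becomes provable once we have all true complexity lower bounds as axioms. The key idea is that the complexity axioms give us, for each length, the value of the (plain) Kolmogorov complexity function with full precision, and this information is exactly as strong as knowing the halting behaviour of programs up to a given length — which is what is needed to certify a $\mathrm{\Pi}_1$-statement.

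First I would reduce to the canonical form of a $\mathrm{\Pi}_1$-statement. A true $\mathrm{\Pi}_1$-statement is equivalent (provably in \PA) to a statement asserting that some fixed program $M$ never halts, i.e. that a certain recursively enumerable set $W$ is empty, where emptiness is the true situation. So it suffices to prove in the augmented theory that such an $M$ does not halt. The second step is the crucial computational observation: among the axioms we have added is a statement ``$\C(x)>n$'' for some string $x$ of each length $n$ (indeed for most strings), and collectively these axioms let the theory compute $\mathbf{BB}$-type bounds. More precisely, the plan is to use that the function $n\mapsto \max\{\,\C(x) : |x|\le n\,\}$, together with the promise that these are exact, encodes enough information to bound the running time of any terminating computation: if a program of length $\le n$ halts, its output has complexity $\le n+O(1)$, and conversely the largest complexity attained by strings producible by programs of length $\le n$ is a computable-from-the-axioms quantity that dominates the halting time. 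The point is that knowing all true statements ``$\C(x)>n$'' is equivalent to knowing the complexity function $\C$ exactly, and $\C$ is Turing-equivalent to the halting problem $\emptyset'$.

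Concretely, I would argue as follows inside the augmented theory. Fix a length $n$ and consider the finitely many strings of length $\le n$. For each such string $x$, either the axiom ``$\C(x)>m$'' is present for the correct threshold $m$, or the negation is an existential (true, hence \PA-provable) statement exhibiting a short program for $x$. Hence for every relevant $x$ the exact value $\C(x)$ is pinned down by a provable formula. Now, to decide whether $M$ halts I run $M$ and simultaneously watch the enumeration of short descriptions; the standard fact that $\C$ computes $\emptyset'$ means there is a threshold, determined by the complexity values we now possess, past which no new string of length $\le n$ receives a shorter description, and at that stage every computation of length $\le n$ that is going to halt has already halted. Thus from the complexity axioms the theory can prove a true upper bound on the halting time of $M$ (or prove that $M$ runs forever), and in particular can prove our target $\mathrm{\Pi}_1$-statement.

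The main obstacle, and the step requiring the most care, is making the informal equivalence ``$\C \equiv_T \emptyset'$'' into an honest \PA-proof rather than a metatheoretic reduction. I must verify that the reduction from the halting problem to the complexity function is not merely computable but provably correct in \PA once the finitely many exact complexity values are supplied as axioms: that is, \PA\ must prove ``if these are the true complexities of all strings up to length $n$, then no program of length $\le n$ halts after time $t(n)$'' for the appropriate \PA-definable bound $t(n)$. This hinges on the earlier stipulated convention that the representation of $\C$ is chosen so that all standard properties of Kolmogorov complexity are provable in \PA, which is exactly what licenses internalizing the counting argument (at most $2^{k}$ strings have complexity $<k$, and the optimal machine's behaviour is \PA-provable). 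Granting that convention, the counting and domination arguments go through formally, and the theorem follows.
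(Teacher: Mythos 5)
Your proposal is correct and follows essentially the same route as the paper's proof: reduce every true $\mathrm{\Pi}_1$-statement to non-termination of a program, use the exact complexity values (lower bounds from the axioms, upper bounds being provable existential facts) to pin down inside the theory the stage $B(n)$ at which time-bounded complexity $\C^t$ stabilizes on strings of length $\le n$, invoke the Berry-style argument behind $\mathbf{0}'\le_T \C$ to show every halting program of length $n-O(\log n)$ halts within $B(n)$ steps, and formalize this in \PA. The only blemishes are cosmetic: the relevant quantity is the stabilization time $B(n)$ rather than $\max\{\C(x):|x|\le n\}$, and the length bound on the programs whose halting is decided should be $n-O(\log n)$ rather than $n$, neither of which affects the argument.
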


\begin{proof}
The proof is an adaptation of the proof that $\mathbf{0}'$ is Turing-reducible to the function $\C$ (see Proposition~2.1.28 in~\cite{Nies2009}). 

Consider the upper bound $\C^t(x)$ for complexity that appears if we restrict the computation time for decoding by $t$. The value of $\C^t(x)$ is computable given $x$ and $t$. As $t$ increases, $\C^t(x)$ decreases (or remains the same), and the limit value is $\C(x)$.
   
For some number $N$, we consider the minimal value of $t$ such that $\C^t(x)$ reaches $\C(x)$  for all strings $x$ of length at most $N$. Let us denote this value by $B(N)$. Let us show that every terminating program (without input) of size $N-O(\log N)$ terminates in at most $B(N)$ steps. Indeed, a terminating program $p$ can be considered as a description of a natural number, namely, the number of steps needed for its termination. Assume that this number is greater than $B(N)$. Then, knowing $p$ and $N$, we can compute the table of true complexities of all strings of length $N$ (since for these strings $\C$ coincides with $\C^t$, where $t$ is the number of steps needed for $p$ to terminate). Therefore we can  effectively find a string of length $N$ that has complexity at least $N$ (such a string always exists).  This leads to a contradiction if the total information in $p$ and $N$ is less than $N-O(1)$; and this is guaranteed if the size of $p$ is less than $N-O(\log N)$. (Note that we need $O(\log N)$ additional bits to specify $N$ in addition to $p$.)

This shows that the information about the complexities of strings of length $N$ is enough to solve the halting problem for all programs of size $N-O(\log N)$. So the halting problem is decidable with $\C$ as oracle (the result mentioned above).

It remains to note that this argument can be formalized in \PA. Using axioms that guarantee the complexity of all strings up to length $N$, we can prove the value of $B(N)$. Also we can prove that each program $p$ of size at most $N-O(\log N)$ terminates in $B(N)$ steps or does not terminate at all. Therefore, we can prove that $p$ does not terminate if it is the case. It remains to note that every $\mathrm{\Pi}_1$-statement is provably equivalent to non-termination of some program (namely, the program looking for a counterexample for this statement).\qed
\end{proof}

\begin{rem}

  \textbf{1}. To be precise, in the last theorem we need to specify how the function $\C$ is represented by an arithmetic formula. It is enough to assume that $\C$ is defined as the minimal length $\C_D(x)$ of the program that produces $x$ with respect to an interpreter $D$, where the interpreter $D$ is provably optimal, i.e., for all $D'$, there exists a constant~$c$ such that 
   $$\PA\vdash \forall x\, \bigl[\C_D(x)\le \C_{D'}(x)+c\bigr].$$ \label{provable-optimality}
We always assume that $\C$ is represented in this way.

\textbf{2}. A sceptic could (rightfully) claim that the exact value of Kolmogorov complexity can encode some additional irrelevant information (in particular, about $\mathrm{\Pi}_1$-statements' truth values). For example, it is not obvious \emph{a priori} that the theory obtained by adding the full information about the values of $\C$ does not depend on the choice of optimal programming language fixed in the definition of Kolmogorov complexity. Also one may ask whether a similar result is true for prefix complexity. 

To address all these questions, we may consider weaker axioms. Assume, for example, that for every $x$ the complexity of $x$ is guaranteed up to a factor~$2$, i.e., some axiom $\C(x)>c_x$ is added where $c_x$ is at least half of the true complexity of~$x$. Is it enough to add these axioms for all $x$ to prove all true $\mathrm{\Pi}_1$-statements? The answer is positive, and a similar argument can be used. Let $B(N)$ to be the minimal $t$ such that $\C^{t}(x)\le 2c_x$ for all strings $x$ of length~$N$. Then, knowing $N$ and any number~$t$ greater than $B(N)$, we can compute $\C^t(x)$ for all strings~$x$ of length $N$, and get a lower bound $c_x \geq \C^t(x)/2$ for $\C(x)$. Taking $x$ such that $\C^t(x)\ge N$ for this $t$ (such an~$x$ exists since $\C^t$ is an upper bound for $\C$), we find a string of length $N$ that has complexity at least $N/2$ (such a string always exists). 

Therefore, every program of length less than $N/2-O(\log N)$ terminates before $B(N)$ steps, otherwise we could use the termination time to get a contradiction.  Knowing (from the added axioms) the value of $\C(x)$ up to factor $2$, we can prove for some $t$ that this is the case, and therefore prove non-termination for non-terminating programs (as before). 
\end{rem}

\subsection{Complexity of $\mathrm{\Pi}_1$-statements}

Let us introduce the notion of complexity of a (closed) $\mathrm{\Pi}_1$-statement that can be considered as a formal version of the ideas of~\cite{CaludeC2009,CaludeC2010}.

Let $U(x)$ be a $\mathrm{\Pi}_1$-statement in the language of \PA\ with one free variable $x$ (for simplicity we identify strings and natural numbers, so we consider $x$ as a string variable).
We say that $U$ is \emph{universal} if for every closed $\mathrm{\Pi}_1$-statement $T$ there exists some string $t$ such that $$\PA\vdash [T\Leftrightarrow U(t)].\eqno(*)$$
For a universal $U$ we can define the $U$-complexity of a closed $\mathrm{\Pi}_1$-statement $T$ as the minimal length of $t$ that satisfies $(*)$. As usual, the following statement is true:

\begin{thm}\label{thm:calude}
   There exists an optimal $\mathrm{\Pi}_1$-statement $U(x)$ such that the corresponding complexity function is minimal up to $O(1)$ additive term.
\end{thm}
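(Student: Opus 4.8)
The plan is to mimic the standard construction of an optimal machine in Kolmogorov complexity theory, adapting it to the setting of $\mathrm{\Pi}_1$-statements. The key observation is that being ``universal'' in the sense defined here is a property that can itself be exploited by a diagonalization/combination trick: given any two universal $\mathrm{\Pi}_1$-formulas, one dominates the other up to an additive constant, and we want to build a single $U$ that dominates \emph{all} of them up to $O(1)$.

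First I would fix a computable enumeration $U_0, U_1, U_2, \ldots$ of all $\mathrm{\Pi}_1$-formulas with one free variable (this is possible since the formulas form a computable set, and whether a formula is syntactically $\mathrm{\Pi}_1$ is decidable). I do not need to decide which $U_i$ are universal; I will simply fold all of them in. Then I would define the optimal statement $U(x)$ by a padding/prefix construction analogous to setting $U(\langle i, t\rangle) \Leftrightarrow U_i(t)$, where $\langle \cdot, \cdot\rangle$ is a pairing that prepends a self-delimiting encoding $\hat i$ of the index $i$ to the string $t$. Concretely, $U$ should be the $\mathrm{\Pi}_1$-statement asserting: ``$x$ decodes as a prefix $\hat{\imath}$ followed by a suffix $t$, and $U_i(t)$ holds.'' The crucial point is that this $U$ is itself a genuine $\mathrm{\Pi}_1$-formula: decoding the prefix is a $\Sigma_0$ (decidable) operation on $x$, and $U_i(t)$ is $\mathrm{\Pi}_1$ uniformly in $i$ and $t$, so a bounded existential over the parse followed by the $\mathrm{\Pi}_1$ body stays $\mathrm{\Pi}_1$.

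Next I would verify the two required properties. Universality of $U$: since at least one $U_i$ in the enumeration is universal (for instance any concrete universal formula, whose existence is routine from a universal Turing machine searching for counterexamples), every closed $\mathrm{\Pi}_1$-statement $T$ satisfies $\PA \vdash [T \Leftrightarrow U_i(t)]$ for some $t$; hence $\PA \vdash [T \Leftrightarrow U(\langle i, t\rangle)]$, so $U$ is universal. Optimality: let $V$ be any universal $\mathrm{\Pi}_1$-statement. Then $V$ appears as $U_j$ for some fixed index $j$. For any closed $T$, if $t$ witnesses the $V$-complexity of $T$, i.e.\ $\PA \vdash [T \Leftrightarrow U_j(t)]$ with $|t|$ minimal, then $\langle j, t\rangle$ witnesses $U$-complexity with $\PA \vdash [T \Leftrightarrow U(\langle j, t\rangle)]$ and length $|t| + |\hat{\jmath}| = |t| + O(1)$. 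Thus the $U$-complexity of $T$ is at most its $V$-complexity plus a constant depending only on $j$ (hence only on $V$), which is exactly the desired $O(1)$ domination.

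The main obstacle I expect is not the logical combination but the bookkeeping needed to ensure the combined formula is provably correct in $\PA$, not merely true: I must check that the parsing of $x$ into $\hat{\imath}$ and $t$, and the instantiation $U_i(t)$, can be carried out so that the biconditionals above are $\PA$-provable, uniformly. This requires that the enumeration of the $U_i$ and the substitution operation be represented by $\PA$-provably correct arithmetic formulas, and that a self-delimiting prefix code admits a $\PA$-provable unique-decoding property. These are the same ``representability'' hygiene conditions already invoked implicitly in Theorem~\ref{thm:all-complexities} and the surrounding remarks, so I would handle them by the same convention rather than reproving them from scratch.
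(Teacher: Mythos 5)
Your proposal is correct and is essentially the paper's own argument: the paper takes a two-parameter universal $\mathrm{\Pi}_1$-statement $V(p,x)$ and sets $U(\bar p x)=V(p,x)$ with $\bar p$ a self-delimiting encoding of the index, which is exactly your prefix-padding construction (your enumeration $U_0,U_1,\ldots$ together with the uniform $\mathrm{\Pi}_1$ evaluation ``$U_i(t)$ holds'' is precisely such a $V$). The only difference is that the paper asserts the existence of $V$ outright while you sketch its construction and the $\PA$-provability bookkeeping; the optimality argument via the $O(1)$ cost of the self-delimiting prefix is identical.
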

\begin{proof}
Let $V(p,x)$ be a $\mathrm{\Pi}_1$-statement with two parameters such that for every $\mathrm{\Pi}_1$-statement $W(x)$ with one parameter there exists some string $p$ such that 
$$\PA\vdash \forall x\, [W(x)\Leftrightarrow V(p,x)].$$ 
Then we let $U(\bar px)=V(p,x)$ where $\bar p$ is some self-delimiting encoding of $p$, e.g., $p$ with doubled bits and $01$ added at the end.\qed
\end{proof}

We fix some optimal universal statement $U(x)$ and measure the complexity of $\mathrm{\Pi}_1$-statements with respect to $U(x)$; the complexity function is well defined up to $O(1)$ additive term.

Evidently, the complexity of all false $\mathrm{\Pi}_1$-statements is the same constant; the same is true for all provable $\mathrm{\Pi}_1$-statements. Since one can construct a computable sequence of non-equivalent (in \PA) closed $\mathrm{\Pi}_1$-statements, it is easy to see that the number of non-equivalent statements of complexity at most $n$ is $\Theta(2^n)$. 

Informally speaking, $\mathrm{\Pi}_1$-statement of complexity at most $n$ are statements about non-termination of programs of complexity at most $n$, and their provable equivalents. (The exact formulation allows $O(1)$-change in $n$.)

\subsection{The axiomatic power of the complexity table up to length $n$}
\label{subsec:axpower}

Now we can describe the axiomatic power of the complexity table up to some $n$: it is roughly equivalent to all true $\mathrm{\Pi}_1$-statements of complexity at most $n$. This informal description requires several clarifications. 

First, the complexity (both for strings and for $\mathrm{\Pi}_1$-statements) is defined only up to $O(1)$ additive term. To take this into account, we say that two sequences $T_1,T_2,\ldots$ and $S_1,S_2.\ldots$ of theories are $O(1)$- equivalent if $T_n\subset S_{n+c}$ and $S_n\subset T_{n+c}$ for some $c$ and all~$n$. (The inclusion $U\subset V$ means that every theorem of~$U$ is a theorem of $V$; we could also write $V\vdash U$.)

Second, we need to specify what we mean by a ``complexity table up to $n$''. There are several possibilities. We may consider axioms that give full information about complexities of all strings of \emph{length} at most $n$. Or we may consider axioms that specify the list of all strings of \emph{complexity} at most $n$ and complexities of all these strings. All these variants work; one can even consider one specific string of length $n$, the lexicographically first string $r_n$ of complexity at least $n$, and consider a theory with only one axiom $\C(r_n)\ge n$. In the following theorem we consider these ``minimal'' and ``maximal'' versions and show that they are essentially equivalent.

\begin{thm}\label{thm:finite-power}
The following sequences of theories are $O(1)$-equivalent:
\begin{description}
\item[$A_n$]\textup: $\C(r_n)\ge n$\textup;
\item[$B_n$]\textup: the list of all strings that have complexity at most $n$, and full information about their complexities \textup(for each string of complexity at most $n$ we add an axiom specifying its complexity, and also add an axiom that says that all other strings have complexity greater than $n$\textup{);}
\item[$C_n$]\textup: all true $\mathrm{\Pi}_1$-statements of complexity at most $n$.
\end{description}
\end{thm}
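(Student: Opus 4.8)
The plan is to establish a cycle of three $O(1)$-inclusions, namely $A_n\subseteq C_{n+O(1)}$, $C_n\subseteq B_{n+O(1)}$ and $B_n\subseteq A_{n+O(1)}$, from which the pairwise $O(1)$-equivalence of the three sequences follows immediately by composing the shifts. Two preliminary observations drive the bookkeeping. First, since every string of length $<n-O(1)$ has complexity $<n$, the string $r_n$ has length $n+O(1)$, so $n$ is recovered from $r_n$ up to an additive constant and $\C(r_n)=n+O(1)$. Second, the $U$-complexity of a true $\mathrm{\Pi}_1$-statement coincides, up to $O(1)$, with the plain complexity of the program whose non-termination it asserts (as noted after Theorem~\ref{thm:calude}).

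For $A_n\subseteq C_{n+O(1)}$ it suffices to check that the single axiom $\C(r_n)\ge n$ is itself a true $\mathrm{\Pi}_1$-statement of $U$-complexity at most $n+O(1)$, since then it belongs to the axioms of $C_{n+O(1)}$. Indeed the statement is determined by the pair $(r_n,n)$, and since $n$ is read off from $|r_n|$ up to $O(1)$ it is determined by $r_n$ together with $O(1)$ extra bits; hence its complexity is at most $\C(r_n)+O(1)=n+O(1)$. For $C_n\subseteq B_{n+O(1)}$ I would adapt the halting argument of Theorem~\ref{thm:all-complexities} to the finite setting. A true $\mathrm{\Pi}_1$-statement of complexity $\le n$ is $\PA$-equivalent to the non-termination of a program $p$ with $\C(p)\le n+O(1)$; such a $p$, being a string of complexity $\le n+O(1)$, is listed in $B_{n+O(1)}$, so that theory proves $\C(p)\le n+O(1)$. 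If $p$ halted, its halting time $\tau$ would satisfy $\C(\tau)\le\C(p)+O(1)\le n+O(1)$, hence $\tau$ would occur among the numbers of the explicit finite table, and in particular $\tau\le M$, where $M$ is the largest number of complexity $\le n+O(1)$, a quantity computable from the table. Since $B_{n+O(1)}$ proves that no string outside the list has complexity $\le n+O(1)$, it proves $\tau\le M$; combined with the finite verification that $p$ does not halt within $M$ steps, this yields a proof of the non-termination of $p$, hence of the original statement.

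The main work is the remaining inclusion $B_n\subseteq A_{n+O(1)}$: recovering the whole complexity table from the single axiom. Writing $m=n+O(1)$ and $\C^t$ for the time-bounded complexity, let $r_m^{(t)}$ be the lexicographically least $x$ with $\C^t(x)\ge m$; since $\C\le\C^t$ we have $r_m^{(t)}\le_{\mathrm{lex}} r_m$, and because every $x<_{\mathrm{lex}}r_m$ eventually satisfies $\C^t(x)<m$ while $\C^t(r_m)\ge m$ for all $t$, the value $r_m^{(t)}$ increases to $r_m$ and stabilizes at the least time $t^*$ with $r_m^{(t^*)}=r_m$, a time computable from $r_m$. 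The key estimate is that $t^*$ bounds the halting time of every program of size $<m-O(1)$: if such a program $p$ halted at time $\tau\ge t^*$ then, from $\tau$, one computes $r_m=r_m^{(\tau)}$, giving $\C(r_m)\le\C(\tau)+O(1)\le|p|+O(1)<m$ and contradicting the axiom $\C(r_m)\ge m$. Thus, knowing $r_m$, one computes $t^*$, runs every program of size $<m-O(1)$ for $t^*$ steps, and reconstructs the complexities of all strings of complexity $<m-O(1)$, that is, the axioms of $B_{m-O(1)}$; this gives $B_n\subseteq A_{n+O(1)}$.

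The obstacle is purely one of formalization: $A_m$ must carry out this incompressibility argument internally, which requires that $\PA$ prove the stabilization of $r_m^{(t)}$ and the complexity inequality $\C(r_m)\le\C(\tau)+O(1)$, with the definition of $t^*$ internalized from the hard-coded string $r_m$. These are exactly the standard provable properties of $\C$ assumed throughout (cf.\ the representation conventions and Remark~1), so the argument parallels the formalization already carried out in Theorem~\ref{thm:all-complexities}; the only genuinely new ingredient is the arithmetization of $t^*$.
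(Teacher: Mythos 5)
Your overall strategy matches the paper's: the engine in both cases is the observation that from the single axiom $\C(r_m)\ge m$ one can, provably in \PA, compute the stabilization time $t^*$ (the paper's $B(n)$) and show it bounds the halting time of every halting program of length $\le m-O(1)$, which then yields both the complexity table and all true $\mathrm{\Pi}_1$-statements below that level. The organization differs mildly: the paper uses $A_n$ as a hub ($B_n\vdash A_n$ trivially, $C_{n+c}\vdash A_n$ via a universal formula $\mathrm{Rand}(r)$, and $A_n\vdash B_{n-c},C_{n-c}$ via the key lemma), whereas you close a cycle $A\subseteq C\subseteq B\subseteq A$ and prove the leg $C_n\subseteq B_{n+O(1)}$ directly, using the maximum $M$ of the $B$-table as a busy-beaver bound on the halting time $\tau$ of the counterexample-search program. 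That leg is correct and does not appear explicitly in the paper (there it follows by composing $B_n\vdash A_n$ with $A_n\vdash C_{n-c}$); it buys nothing essential but is a clean shortcut.

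One step needs repair. In your key estimate for $B_n\subseteq A_{n+O(1)}$ you assert $\C(r_m)\le\C(\tau)+O(1)$, but computing $r_m^{(\tau)}$ from $\tau$ also requires knowing $m$, so the honest bound is $\C(r_m)\le\C(\tau,m)+O(1)\le|p|+O(\log(m-|p|))+O(1)$. As written the inequality is unjustified; the fix is the one the paper makes explicit, namely encoding the difference $m-|p|$ self-delimitingly as a prefix of the description of $p$, so that the logarithmic overhead is absorbed once $m-|p|$ exceeds a suitable constant. Your conclusion (``programs of size $<m-O(1)$'') survives this correction. A second, harmless slip: $|r_n|=n$ exactly by definition, not $n+O(1)$.
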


In particular, adding axioms $\C(r_n)\ge n$ for infinitely many $n$, we get $\PA$ plus all true $\mathrm{\Pi}_1$-statements.

\begin{proof}
It is easy to see that $B_n$ implies $A_n$. 

To prove that $C_{n+c}$ implies $A_n$, consider a $\mathrm{\Pi}_1$-statement $\text{Rand}(r)$ which says that $\C(r)\ge |r|$ (where $|r|$ is the length of $r$). Then $A_{n}$ is $\text{Rand}(r_n)$ and therefore $A_{n}$ consists of a $\mathrm{\Pi}_1$-statement of complexity at most $n+O(1)$. 

It remains to show that $A_n$ implies $B_{n-c}$ and $C_{n-c}$. This is similar to the proof of Theorem~\ref{thm:all-complexities}. 

\begin{lemma}
   \label{finite-power_lemma}
   Let $A(p)$ be some algorithm with input $p$. There exists some $c$ such that for every $n$ and for every input string $p$ of length at most $n-c$ such that $A(p)$ does not terminate, the theory $A_n$ proves non-termination of $A(p)$.
\end{lemma}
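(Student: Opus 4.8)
The plan is to formalize inside $A_n$ the same Berry-style argument used for Theorem~\ref{thm:all-complexities}, the only new point being that the single axiom $\C(r_n)\ge n$ must play the role that the full complexity table played there. I would first recall the time-bounded complexity $\C^t(x)$, which is computable from $x$ and $t$, non-increasing in $t$, satisfies $\C^t(x)\ge\C(x)$, and has limit $\C(x)$; all of these elementary facts are provable in $\PA$. For each $t$ let $r_n^t$ be the first string $x$ with $\C^t(x)\ge n$, in the same order used to define $r_n$. This is well defined because $\PA$ proves, by counting, that some string of length $n$ has complexity $\ge n$, so the set $\{x:\C^t(x)\ge n\}$ is provably nonempty.

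The key step is to show that $A_n$ proves an \emph{explicit} stabilization time $t_0$ for $r_n^t$, exactly as the full table yields the value $B(N)$ in Theorem~\ref{thm:all-complexities}. The axiom gives $\C^t(r_n)\ge\C(r_n)\ge n$ for every $t$, so $r_n$ always lies in the set above. On the other hand, for each string $x$ preceding $r_n$ the statement $\C(x)<n$ is a true $\mathrm{\Sigma}_1$-statement, since $r_n$ is the \emph{first} string of complexity $\ge n$, and is therefore provable in $\PA$; each such proof exhibits a halting time $t_x$ witnessing $\C^{t_x}(x)<n$. Taking $t_0=\max t_x$ over the finitely many $x$ below $r_n$, $A_n$ proves $\C^{t_0}(x)<n$ for all such $x$ together with $\C^{t_0}(r_n)\ge n$, whence by monotonicity $r_n^t=r_n$ for every $t\ge t_0$. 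The resulting $\PA$-proof is enormous, but the lemma makes no claim about proof length.

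With this explicit $t_0$ in hand I would run the Berry argument. Reasoning in $A_n$, assume $A(p)$ halts, at some time $T$, and suppose $T>t_0$; then $r_n^T=r_n$, so the program $\Pi$ that runs $A(p)$, reads off its halting time $T$, and outputs $r_n^T$ actually computes $r_n$. Describing $\Pi$ needs $p$ together with $n$, and this is where the constant (rather than logarithmic) loss must be extracted: instead of hard-coding $n$, I would prepend a self-delimiting encoding (doubled bits plus a terminator, as in the proof of Theorem~\ref{thm:calude}) of the difference $j:=n-|p|\ge c$, from which $\Pi$ recovers $n=|p|+j$. Then $|\Pi|\le |p|+2\log j+O(1)=n-j+2\log j+O(1)$, and since $2\log j+O(1)<j$ for all $j$ above a fixed constant, choosing $c$ to be that constant gives $|\Pi|<n$, hence $\C(r_n)<n$, contradicting the axiom. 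Thus $A_n$ proves that $A(p)$, if it halts at all, halts within $t_0$ steps.

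Finally, since $A(p)$ does not halt it in particular does not halt within $t_0$ steps, and because $t_0$ is a concrete numeral this is a bounded, decidable statement that $\PA$ establishes by direct computation. Combining the two gives $A_n\vdash\,$``$A(p)$ does not halt.'' I expect the middle paragraph to be the main obstacle: one must verify that the lone axiom, supplemented only by the $\PA$-provable $\mathrm{\Sigma}_1$-facts $\C(x)<n$ for the strings below $r_n$, genuinely pins down a concrete stabilization time $t_0$, and simultaneously engineer the Berry program so that supplying $n$ costs only $O(1)$ bits — which is precisely what encoding $n-|p|$, rather than $n$ itself, achieves.
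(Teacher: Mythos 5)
Your proposal is correct and follows essentially the same route as the paper's own proof: establish inside $A_n$ a concrete stabilization time for the time-bounded approximation (the paper's $B(n)$, your $t_0$), run the Berry argument with a self-delimiting encoding of $n-|p|$ to keep the loss constant, and finish by noting that non-termination within $t_0$ steps is a $\PA$-decidable fact. Your middle paragraph just spells out more explicitly (via $\mathrm{\Sigma}_1$-completeness of $\PA$ for the statements $\C(x)<n$) what the paper summarizes as ``we can wait long enough to confirm that it is indeed the case.''
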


This lemma shows that $A_n$ implies $B_{n-c}$ since $A_n$ allows to prove non-termination of all non-terminating programs of size at most $n-c$, so the true values of complexities can be guaranteed if they do not exceed $n-c$. Similarly, $A_n$ implies $C_{n-c}$, since $\mathrm{\Pi}_1$-statement $U(x)$ claims that the program with input~$x$ that searches for the counterexample to this statement, never terminates.

It remains to prove the lemma.

We know that $\C(r_n)\ge n$ and for all preceding strings $y$ of length $n$ we have $\C(y)<n$. Consider the minimal $t$ such that $\C^t(y)<n$ for all these $y$. We denote this value by $B(n)$. Let us prove that for a suitable $c$ (that does not depend on $n$) and for every string $p$ of length at most $n-c$ the computation $A(p)$ either does not terminate or terminates in at most $B(n)$ steps. 

Every string $p$ determines the number of steps needed for the termination of $A(p)$. Knowing $p$ and $n$, we find this number $t(p)$ and then take the first~$y$ of length $n$ such that $\C^{t(p)}(y)\ge n$. If $t(p)$ exceeds $B(n)$, then we get $r_n$. On the other hand, for every $p$ such that $A(p)$ terminates and for every $n$ we get some string of length $n$ whose complexity does not exceed $\C(p,n)+O(1)$, where $\C(p,n)$ stands for the Kolmogorov complexity of the pair $(p,n)$. Note that $\C(p,n)$ is bounded by $|p|+O(\log (n-p))+O(1)$ (we add to $p$ a prefix that is a self-delimiting description of $n-p$). If $c$ is large enough, for every string $p$ of length $n-c$ or less we get a contradiction (assuming that $t(p)$ steps are not enough for the termination of $A(p)$). So all computations $A(p)$ for $|p|\le n-c$ terminate in $B(n)$ steps.

This reasoning can be formalized in \PA. Having $\C(r_n)\ge n$ as an axiom, we can prove that $r_n$ is the first string of complexity at least $n$: all the preceding strings have a short description, and we can wait long enough to confirm that it is indeed the case. Then we can prove the value of $B(n)$ and prove that every computation $A(p)$ for short $p$ either terminates in $B(n)$ steps or does not terminate at all.\qed
\end{proof}

\begin{rem}
To be closer to the initial framework, we may fix some constant $c$ and for every $n$ consider the lexicographically first string $y$ of length $n$ such that $\C(y)\ge n-c$. Then we add to \PA\ the statement $\C(y)\ge n-c$ for this $y$ and get a theory $A_n^c$. For this theory the statement of Theorem~\ref{thm:finite-power} is also true (and can be proved in the same way); of course, the constant in $O(1)$-equivalence depends on $c$.\\
\end{rem}

This theorem leaves the following question open. 

\begin{question}
Characterize precisely the true $\mathrm{\Pi}_1$ statements of complexity~$n$ which prove all true $\mathrm{\Pi}_1$ statements of complexity~$n-O(1)$.
\end{question}

\medskip
It is natural to ask whether the power of theories of Theorem~\ref{thm:finite-power} strictly increases as $n$ increases. It is indeed the case, as the following ``generalized Chaitin's theorem'' shows:

\begin{thm}\label{thm:generalized-chaitin}
   There exists some $c$ such that no statement of the form $\C(x)>n+c$ can be proved in $A_n$ for any $n$.
\end{thm}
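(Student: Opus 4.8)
The plan is to prove this by a uniform version of Chaitin's incompleteness argument (the Berry-paradox idea recalled in the introduction), exploiting two features of the theory $A_n$. First, $A_n$ is \emph{sound}: it is obtained from $\PA$ by adding the single true $\mathrm{\Pi}_1$-axiom $\C(r_n)\ge n$, so any inequality it proves is true in the standard model. Second, the extra axiom is \emph{cheap to describe}: since $r_n$ has length $n$, we have $\C(r_n)\le n+O(1)$, so specifying the whole theory $A_n$ costs only about $n$ bits. The intuition to be made precise is that a theory describable in $n+O(1)$ bits cannot certify incompressibility much above level $n$.

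First I would fix a candidate constant $c$ and assume, towards a contradiction, that $A_n\vdash \C(x)>n+c$ holds for some $n$ and some string $x$. I would then exhibit a single algorithm that takes $r_n$ as input, recovers $n=|r_n|$, reconstructs the (computable) axiom set $A_n=\PA\cup\{\C(r_n)\ge n\}$, enumerates all $A_n$-proofs, waits for the first proof whose conclusion is literally of the form $\C(x)>n+c$, and outputs that $x$ (the threshold $n+c$ is known from $n$ and $c$, so this syntactic form is decidable). By the contradiction hypothesis the search halts, so the algorithm outputs a well-defined string $x$; and by soundness of $A_n$ the proved inequality $\C(x)>n+c$ is genuinely true.

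The heart of the argument is then the counting estimate. Since $x$ is computed from $r_n$ and $c$ by one fixed program, $\C(x)\le \C(r_n)+O(\log c)+O(1)\le n+O(\log c)+O(1)$, where the $O(\log c)$ accounts for a self-delimiting encoding of the offset $c$ and every hidden constant comes only from the fixed universal machine. Combining this upper bound with the true lower bound $\C(x)>n+c$ yields $c<O(\log c)+O(1)$, which is false once $c$ exceeds some absolute constant $c_0$. Hence, taking $c=c_0$, no statement $\C(x)>n+c_0$ is provable in $A_n$, for any $n$, which is exactly the claim.

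The step requiring the most care is the \emph{uniformity in $n$}: the constant $c$ must be chosen once and for all, independently of $n$. This is precisely what the construction delivers, because the only $n$-dependent input handed to the describing machine is $r_n$ itself, whose length is exactly $n$, giving $\C(r_n)\le n+O(1)$ with an absolute constant; the remaining description (the fixed proof-search program plus the $O(\log c)$ bits naming the offset) is independent of $n$. The other point to keep in mind throughout is soundness: adjoining the true $\mathrm{\Pi}_1$-statement $\C(r_n)\ge n$ to $\PA$ keeps the theory sound, which is what licenses using the provable inequality as a genuine complexity lower bound in the final comparison.
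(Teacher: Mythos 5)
Your proof is correct and follows essentially the same route as the paper's: a program that takes $r_n$ (hence at most $n+O(1)$ bits) plus an $O(\log c)$-bit encoding of the offset, searches $A_n$-proofs for a statement $\C(x)>n+c$, and outputs the witness, whose soundness-guaranteed lower bound $n+c$ then contradicts the upper bound $n+O(\log c)+O(1)$ unless $c=O(1)$. The only cosmetic difference is that the paper parametrizes by the offset $d$ and concludes $d\le O(1)$, whereas you fix $c$ and derive a contradiction for $c$ large; these are the same argument.
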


(Theorem~\ref{thm:finite-power} allows us to replace $A_n$ in the statement by $B_n$ or $C_n$.) 

\begin{proof}
Consider the following program: given the string $r_n$ and some $d$, it starts to look for $\PA$-consequences of $A_n$ saying that $\C(y)>n+d$ for some $y$ and $d$. (Note that $n$ can be reconstructed as the length of $r_n$). When (and if) such a~$y$ is found, it is the output of the program. 

If the program terminates, the complexity of the output is at most $n+O(\log d)$; on the other hand, it is at least $n+d$, so for all such cases we have $n+O(\log d) \ge n+d$, and therefore  $d\le O(1)$. \qed
\end{proof}

This theorem has an interesting consequence that can be formulated without any reference to complexities. Note that for every number $n$ one can write an arithmetic formula with one parameter $x$ that in provably equivalent to $x=n$ and has length $O(\log n)$. (The standard formula with successor function has length $O(n)$, but we can use binary representation.) Using this fact, it is easy to show that complexity of a $\mathrm{\Pi}_1$-formula can be defined up to $O(1)$-factor as the minimal length of provably equivalent formula. We know that one axiom of $A_n$ is enough to prove all axioms of $C_n$. In terms of lengths we get the following statement:

\begin{thm}
    For every $n$ there exists a true $\mathrm{\Pi}_1$-formula of size $O(n)$ that implies in \PA\ every true $\mathrm{\Pi}_1$-formula of size at most $n$.
\end{thm}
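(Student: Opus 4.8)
The plan is to obtain the statement as a translation of Theorem~\ref{thm:finite-power} from the language of complexity into the language of syntactic size. The only new ingredient is the correspondence, recorded in the preceding remark, between the $U$-complexity of a closed $\mathrm{\Pi}_1$-statement and its \emph{size}, i.e. the minimal length of a provably equivalent $\mathrm{\Pi}_1$-formula. I would first isolate the single direction I actually need: a $\mathrm{\Pi}_1$-formula of size $s$ has $U$-complexity at most $Ks+c_1$ for absolute constants $K,c_1$. This holds because such a formula can be encoded into a binary descriptor of length $O(s)$ and handed to a universal $\mathrm{\Pi}_1$-statement that reads its argument as a coded formula; by optimality of $U$ (Theorem~\ref{thm:calude}) this costs only an extra additive $O(1)$.

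Fixing $n$, I would then reason as follows. Let $\phi$ be any true $\mathrm{\Pi}_1$-formula of size at most $n$. By the previous paragraph its $U$-complexity is at most $Kn+c_1$, so $\phi$ is a true $\mathrm{\Pi}_1$-statement of complexity at most $Kn+c_1$; in particular $C_{Kn+c_1}\vdash\phi$. Now I invoke Theorem~\ref{thm:finite-power} in the form $C_m\subset A_{m+c_2}$ (which is the direction ``$A_m$ implies $C_{m-c_2}$'' rearranged, with $c_2$ absolute). Setting $m=Kn+c_1+c_2$, the \emph{single} axiom of $A_m$, namely $\C(r_m)\ge m$, proves every such $\phi$. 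Writing $\psi_n := [\,\C(r_m)\ge m\,]$, the deduction theorem gives $\PA\vdash\psi_n\to\phi$ for each true $\mathrm{\Pi}_1$-formula $\phi$ of size at most $n$, which is exactly the required implication ``in \PA''.

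It remains to check that $\psi_n$ itself has size $O(n)$, and this is the step I would treat most carefully. The formula $\psi_n$ displays the concrete string $r_m$ as a literal constant of length $m=O(n)$ (costing $O(n)$ symbols), the number $m$ in binary (costing $O(\log n)$ symbols, via the binary-representation device recalled in the remark), and the fixed subformula arithmetizing ``$\C(\cdot)\ge\cdot$''. Hence $\psi_n$ has size $O(n)$. The conceptual point worth emphasizing is that although $r_m$ is incompressible --- its Kolmogorov complexity is about $m$ --- this is no obstacle to transcribing it verbatim in $O(m)$ symbols: incompressibility rules out short \emph{programs}, not short literal transcriptions.

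The main obstacle is entirely bookkeeping: I must ensure that the multiplicative and additive constants coming from (i) the size-to-complexity encoding and (ii) the $O(1)$-equivalence of Theorem~\ref{thm:finite-power} combine into a single index $m=Kn+c_1+c_2=O(n)$, and that the final size estimate for $\psi_n$ stays linear. Both are guaranteed because every quantity involved --- the length of $r_m$, the numeral $m$, and the encoding of a size-$n$ formula --- is $O(n)$, with no hidden super-linear blow-up.
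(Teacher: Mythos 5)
Your proposal is correct and follows essentially the same route as the paper: the paper's (very terse) argument likewise observes that $U$-complexity of a $\mathrm{\Pi}_1$-formula agrees with minimal provably-equivalent-formula length up to the stated constants, and then takes the single axiom $\C(r_m)\ge m$ of $A_m$ for $m=O(n)$, which has syntactic size $O(n)$ because $r_m$ is transcribed literally. Your write-up just makes explicit the bookkeeping that the paper leaves implicit.
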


\subsection{Not all random strings are equally useful}

Theorem~\ref{thm:finite-power} shows that it is enough to claim the incompressibility of one properly chosen string $r_n$ to derive all true $\mathrm{\Pi}_1$-statements of complexity $n-O(1)$. However, this is a very special property of this incompressible string, as we see in this section.

Recall that there is $\Theta(2^n)$ incompressible strings of length $n$. Indeed, there are at most $2^n-1$ programs of length less than $n$, but some of them are needed to produce longer strings of complexity less than $n$. There is $\Theta(2^n)$ such strings; for example, one can consider strings $xx$ where $x$ is a string of length $n-O(1)$. So we have $\Theta(2^n)$ incompressible strings of length $n$.

The following result shows that we can add many of them and still have a theory that is weaker that theories of Theorem~\ref{thm:finite-power}. We formulate this result for the more general case of $c$-incompressible strings.  (A string $x$ is called \emph{$c$-incompressible} if $\C(x)\ge|x|-c$.)

\begin{thm}\label{a_lot_is_not_enough}
Fix a constant $c$. Let $m(n)$ be a computable provable lower bound for the number of $c$-incompressible strings of length~$n$. For example, we can let $m(n)=2^n-2^{n-c}$, or $m(n)=\varepsilon2^n$ for $c=0$.

 Let $\phi$ be a formula  not provable in $\PA$. Then it is possible to choose $m(n)$ strings of length $n$ \textup(for each $n$\textup) in such a way that with \PA\ with axioms $\C(x)\ge n-c$ for all these $x$ does not prove $\phi$.
\end{thm}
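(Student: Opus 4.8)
The plan is to exploit the gap between \emph{true} incompressibility and incompressibility \emph{as seen by a nonstandard model}. Since $\phi$ is not provable in $\PA$, the theory $\PA+\lnot\phi$ is consistent; fix once and for all a model $M\models\PA+\lnot\phi$. I will choose all the strings (the $m(n)$ strings of length $n$, for each $n$) \emph{inside this single model} $M$, and then exhibit $M$ itself as a model of $\PA+\lnot\phi$ together with all the added complexity axioms. This immediately shows that those axioms do not prove $\phi$.

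The key observation is a one-directional implication between the two notions of incompressibility. For a fixed standard length $n$ the $2^n$ strings of length $n$ are all standard, since they correspond to numbers below the standard number $2^n$; so there are no nonstandard strings of standard length to worry about. Now if a standard string $x$ of length $n$ is \emph{truly} compressible, i.e.\ $\C(x)<n-c$ holds in the standard model, then this is a true $\mathrm{\Sigma}_1$ fact (there exist a short program and a halting time producing $x$), and is therefore provable in $\PA$ by $\mathrm{\Sigma}_1$-completeness, hence holds in $M$. Taking the contrapositive: whenever $M\models \C(x)\ge n-c$ for a standard string $x$ of length $n$, the string $x$ is genuinely $c$-incompressible and the axiom $\C(x)\ge n-c$ is \emph{true}. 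This is exactly what guarantees that the axioms we add are true (which is the intended setting: a \emph{false} axiom $\C(x)\ge n-c$ would be refuted by $\PA$ via its true $\mathrm{\Sigma}_1$ negation, making the theory inconsistent and trivially proving $\phi$).

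It remains to see that $M$ considers enough strings incompressible. Here I use the hypothesis that $m(n)$ is a \emph{provable} lower bound, i.e.\ $\PA$ proves ``at least $m(n)$ strings of length $n$ satisfy $\C(x)\ge n-c$''. (For instance, $\PA$ proves that there are at most $2^{n-c}-1$ programs of length $<n-c$, each producing at most one string, whence at least $2^n-2^{n-c}$ strings of length $n$ are $c$-incompressible.) Since the bound is provable, $M$ satisfies this statement; and because all length-$n$ strings are standard, $M$'s internal count coincides with the actual number of standard strings $x$ of length $n$ with $M\models\C(x)\ge n-c$. Hence there are at least $m(n)$ of them. Choose $S_n$ to be any $m(n)$ of these strings. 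By the previous paragraph every member of $S_n$ is genuinely $c$-incompressible, so all the axioms $\C(x)\ge n-c$ for $x\in S_n$ are true; and by construction $M$ satisfies each of them together with $\lnot\phi$. Thus $M$ is a model of $\PA+\lnot\phi+\{\,\C(x)\ge n-c : x\in S_n,\ n\in\N\,\}$, so this theory is consistent and does not prove $\phi$.

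The delicate point to get right is the interplay between the two models: the added axioms must be true in the \emph{standard} model (so that the construction stays within the intended framework of genuine random/incompressible strings), while they must \emph{fail to force} $\phi$, which is witnessed in the \emph{nonstandard} model $M$. What makes both requirements compatible is precisely the incompleteness phenomenon for $\mathrm{\Pi}_1$-statements: $M$ may regard many truly incompressible strings as compressible (believing that some short program halts at a nonstandard time and outputs $x$), so the set of strings $M$ deems incompressible is a proper—yet, by the provable counting bound, still sufficiently large—subset of the genuinely incompressible strings. Selecting $S_n$ from this subset is exactly what keeps the axioms true while keeping $\lnot\phi$ consistent, in sharp contrast with the single carefully chosen string $r_n$ of Theorem~\ref{thm:finite-power}.
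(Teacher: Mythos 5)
Your proof is correct, but it takes a genuinely different route from the paper's. The paper argues syntactically and greedily: it chooses the $m(n)$ strings length by length, and shows that if \emph{every} choice of $t=m(n)$ strings of length $n$ made $\phi$ provable, then one could form the disjunction over all $t$-tuples of the corresponding conjunctions of axioms; that disjunction is itself provable from the provable counting bound, so $\phi$ would already have been provable at the previous stage --- a contradiction (and the final theory cannot prove $\phi$ because any proof uses only finitely many axioms, hence lives in some finite stage). Your argument is instead semantic: you fix one model $M\models\PA+\lnot\phi$, use $\mathrm{\Sigma}_1$-completeness to guarantee that any standard string $M$ deems $c$-incompressible really is $c$-incompressible (so the axioms are true), and use the \emph{provable} lower bound, together with the fact that the internal count of a definable subset of a standard finite set is the external count, to guarantee there are at least $m(n)$ such strings to pick; $M$ then witnesses consistency of all the axioms with $\lnot\phi$ at once. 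Each step of this checks out, and both arguments lean on provability (not mere truth) of the bound $m(n)$ in the same essential way. What your approach buys is a one-shot, induction-free construction with an explicit witnessing model; it is in fact the same model-theoretic device the paper itself deploys later for Theorem~\ref{thm:strange-disjunction}. What the paper's approach buys is a purely finitary, proof-theoretic argument in the spirit of Theorem~\ref{conservation}, whose disjunction trick it reuses, and a sequential construction that is easier to interleave with other requirements. One small presentational point: you could note explicitly (as the paper does in its parenthetical remark) that the truth of the chosen axioms is forced in any correct construction, since a false axiom $\C(x)\ge n-c$ has a provable $\mathrm{\Sigma}_1$ negation and would make the theory inconsistent --- you do observe this, and it is worth keeping.
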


(The strings added are necessarily $c$-incompressible, otherwith the theory is inconsistent and therefore proves~$\phi$.)

\begin{proof}
We choose $m(n)$ strings of length $n$ sequentially for $n=1,2,\ldots$ in such a way that $\phi$ remains unprovable after each step. Assume that we did this for all lengths smaller than $n$ and $\phi$ is unprovable in the resulting theory. Let $t=m(n)$. We want to add $t$ axioms of the form $\C(x)\ge n-c$ so that $\phi$ remains unprovable. Imagine that this is impossible. Then for every $t$ strings $x_1,\ldots,x_t$ one can prove (in the current extension of \PA) that
$$ (\C(x_1)\ge n-c) \land (\C(x_2)\ge n-c) \land\ldots\land (\C(x_t)\ge n-c) \Rightarrow \phi.$$
Note that this is the case for every $x_1,\ldots,x_t$, even if one of the statements $\C(x_i)\ge n-c$ is false, since then the left hand side is provably false. Therefore, we can also prove
$$ \bigvee_{x_1,\ldots,x_t} (\C(x_1)\ge n-c) \land (\C(x_2)\ge n-c) \land\ldots\land (\C(x_t)\ge n-c) \Rightarrow \phi,$$
and the left hand side is provable in \PA\ due to the lower bound $m(n)$. So we conclude that $\phi$  was already provable before the induction step, contrary to the induction assumption.\qed 
\end{proof} 

\begin{rem}
In this proof it is important that $m(n)$ is not just a computable lower bound for the number of $c$-incompressible strings, but a \emph{provable} lower bound. Without this condition ($m$ is only assumed to be a computable lower bound), one can prove a weaker result: it is possible to add $m(n)$ many axioms of type ``$\C(x)>n$" (for all $n$) in such a way that the resulting theory does not prove \emph{all} true $\mathrm{\Pi}_1$-statements. The reason is that the set $\mathcal{C}$ consisting of sequences of statements of type ``$\C(x)>n$" which are consistent with \PA\ and contain at least $m(n)$ elements for all~$n$ is (modulo proper encoding) a $\mathrm{\Pi}^0_1$ subset of $\cs$ (the set of infinite binary sequences). Hence $\mathcal{C}$ must contain an element $S$ (sequence of statements) which does not compute the halting set $\mathbf{0}'$ (this follows from the low basis theorem of Jockusch and Soare~\cite{JockuschS1972}). However, any theory proving all true $\mathrm{\Pi}_1$-statements can Turing-compute $\C$ and therefore can compute~$\mathbf{0}'$. So adding the sequence~$S$  does not allow us to prove all true $\mathrm{\Pi}_1$-statements. This type of argument combining computability theory and logic will be an important tool in Section~\ref{section_martin_lof_random}. 
\end{rem}

\subsection{Usefulness of random axioms is fragile}

Theorem~\ref{thm:finite-power} says that for carefully chosen $r_n$ the incompressibility axioms are rather strong (imply all true $\mathrm{\Pi}_1$-statements)  while for many other incompressible strings this is not the case (Theorem~\ref{a_lot_is_not_enough}).

In this subsection, we show that the usefulness of the well-chosen incompressibility axioms is not solely due to the strings $r_n$ themselves, but also to the accuracy of the axiom. Namely, we prove the following:

\begin{thm}\label{thm:half-complexity-not-enough}
Let $(r_n)$ be a sequence of strings such that $|r_n|=n$ and $\C(r_n) \geq n-O(1)$. There exists a constant~$c$ such that the axioms ``\,$\C(r_n) \geq n-c\log n$'' \textup(for all~$n$\textup) do not prove all true $\mathrm{\Pi}_1$-statements. 
\end{thm}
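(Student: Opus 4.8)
The plan is to reduce the statement to a purely proof-theoretic independence claim and then establish that claim by building a nonstandard model. Write $\mathcal{T}$ for $\PA$ together with the axioms $L_n\colon\ \C(r_n)\ge n-c\log n$, and for each $n$ let $e_n$ be the program that dovetails all programs of length $<n-c\log n$ and halts as soon as one of them outputs $r_n$; thus $L_n$ is exactly the statement ``$e_n$ does not halt''. Since each derivation uses only finitely many axioms, moving them across the turnstile yields the characterization
\[
\mathcal{T}\vdash (\text{``}e\text{ does not halt''})\iff \exists\,\text{finite }F\colon\ \PA\vdash\Bigl[(e\text{ halts})\to\bigvee_{n\in F}(e_n\text{ halts})\Bigr].
\]
Hence $\mathcal{T}$ proves every true $\mathrm{\Pi}_1$-statement if and only if the non-halting of every non-halting program is $\PA$-provably reducible to the non-halting of finitely many searchers $e_n$. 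To refute this it suffices to exhibit one genuinely non-halting $e_0$ for which no such finite $F$ exists; equivalently, to build $M\models\PA$ in which all $L_n$ still hold but $e_0$ halts.

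First I would isolate the only nontrivial content of $\mathcal{T}$. The bits of each $r_n$ are available to $\PA$ as a numeral and carry nothing beyond what $\PA$ already proves; the sole axiomatic strength comes from the lower bounds $L_n$. This is exactly why the naive counting/Chaitin argument fails here: reconstructing $\mathcal{T}$'s consequences appears to require the (possibly $0'$-computing) sequence $(r_n)$ as an oracle, so one cannot simply argue that $\mathcal{T}$ is c.e.\ in a low set and invoke the low basis theorem as in the remark after Theorem~\ref{a_lot_is_not_enough}. The role of the $c\log n$ slack is combinatorial room: falsifying $L_n$ means that one of the fewer than $2^{\,n-c\log n}$ programs of length $<n-c\log n$ outputs the \emph{single} target $r_n$, and since $\C(r_n)\ge n-O(1)$ no such program exists even in reality. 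I would therefore construct $M$ by a Henkin/forcing completion of $\PA+\{L_n\}+(e_0\text{ halts})$, letting genuinely non-halting short programs ``halt'' only at nonstandard stages and only with nonstandard (long) outputs, so that none of them ever acquires the standard output $r_n$; the abundance of admissible outputs against the few constrained programs at each length is what keeps every finite stage consistent, and compactness then produces $M$.

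The main obstacle is precisely this model construction: one must choose $e_0$ and run the completion so that deciding the infinitely many Henkin sentences never forces any short program to halt with output $r_n$ (which would destroy some $L_n$), while still forcing $e_0$ to halt. Concretely, I expect to verify that for every finite $F$ the theory $\PA+\{L_n:n\in F\}+(e_0\text{ halts})$ is consistent, diagonalizing the choice of $e_0$ against all finite $F$ so that $e_0$'s halting cannot be $\PA$-provably parlayed into a description of any $r_n$ ($n\in F$) of length below $n-c\log n$; the incompressibility $\C(r_n)\ge n-O(1)$ together with the $c\log n$ gap is what guarantees the required slack at each step. Packaging these finite-stage consistencies by compactness yields the desired $M$, and the displayed characterization then gives $\mathcal{T}\not\vdash(\text{``}e_0\text{ does not halt''})$ for a true $\mathrm{\Pi}_1$-statement, which is the conclusion.
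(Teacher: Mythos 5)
There is a genuine gap. Your reduction is fine as far as it goes: since any derivation uses finitely many axioms, it suffices to exhibit a single true $\mathrm{\Pi}_1$-statement (``$e_0$ does not halt'') such that $\PA+\{L_n : n\in F\}+(e_0\text{ halts})$ is consistent for \emph{every} finite $F$, and you rightly note that the non-computability of $(r_n)$ blocks the na\"ive ``c.e.\ theory, apply G\"odel/low basis'' route. But this reduction is essentially a restatement of the theorem, and the construction you propose to discharge it does not work as described. The claim that ``diagonalizing the choice of $e_0$ against all finite $F$'' yields a single $e_0$ is circular: for each $F$ the set of programs whose non-halting is provable from $\PA+\{L_n:n\in F\}$ is a set of genuinely non-halting programs, and asserting that their union over all $F$ omits some non-halting program is precisely the statement to be proved. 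Likewise the counting argument (``few programs of length $<n-c\log n$ versus the single target $r_n$'') only explains why each $L_n$ is \emph{true}; it gives no control over what happens when you complete the theory. The real obstruction is quantitative and uniform in $n$: you must show that the act of choosing a complete consistent extension of $\PA+\{\neg\phi\}$ (equivalently, a $\PA$-degree $Z$) need not lower the complexity of $r_n$ by more than $O(\log n)$, simultaneously for all $n$. A Henkin/forcing completion that merely avoids giving short programs the output $r_n$ at each finite stage does not rule out that the infinitely many commitments made along the way collectively encode a short $Z$-description of some $r_n$.

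The paper supplies exactly this missing quantitative control via Lemma~\ref{lem:higuchi}: if $Y$ computes $(r_n)$ and $Y$ is Martin-L\"of random relative to $Z$, then $\C^Z(r_n)\ge n-O(\log n)$ (proved through a randomness-deficiency inequality $\C(r)\le \C^Z(r)+\C^Y(r)+\mathbf{d}^Z(Y)+O(\log \C(r))$). Combining Ku\v{c}era--G\'acs (some random $Y$ computes $(r_n)$) with the basis-for-randomness theorem applied to the $\mathrm{\Pi}^0_1$ class of complete consistent extensions of $\PA\cup\{\neg\phi\}$ gives a completion $T$, coded by $Z$, with $\C^Z(r_n)\ge n-O(\log n)$; since the complexity function $\C_T$ declared by the complete theory $T$ is $Z$-computable and obeys the counting constraints, $\C_T\ge \C^Z-O(1)$, so $T$ contains all the axioms $L_n$ together with $\neg\phi$. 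Note that the $c\log n$ slack in the statement is precisely the loss incurred in Lemma~\ref{lem:higuchi}, not a combinatorial margin available to a model construction. Your proposal would need a substitute for this lemma; without one, the finite-stage consistency claims remain unsubstantiated.
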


The first step of the proof of this theorem is the following lemma.\footnote{The idea of proving this lemma came to us by reading an early draft of Higuchi et al.'s paper~\cite{HiguchiHSY} where it was stated as an open problem; by the time we wrote up our proof and informed them of the solution, Higuchi et al.\ had independently solved it. }

\begin{lemma}\label{lem:higuchi}
Let $(r_n)$ be a sequence of strings such that $\C(r_n) \geq n$. Suppose $Y \in \cs$ computes the sequence $(r_n)$, and $Y$ is uniformly Martin-L\"of random with respect to some oracle $Z\in \cs$. Then $\C^Z(r_n)\ge n-c\log n$ for some $c$ and for all $n$. 
\end{lemma}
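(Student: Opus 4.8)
The plan is to argue by contradiction, building a Martin-L\"of test relative to $Z$ that captures $Y$. Fix the Turing functional $\Phi$ with $\Phi^Y=(r_n)$, and for each $n$ introduce the ``output distribution'' $w_n(v)=\mu\{W\in\cs : \Phi^W(n)\downarrow = v\}$, where $\mu$ is the uniform measure. Since the events $\{\Phi^W(n)=v\}$ are pairwise disjoint over $v$, this is, for fixed $n$, a lower-semicomputable discrete semimeasure (uniformly in $n$): $\sum_v w_n(v)\le 1$. The first key step is to observe that the hypothesis $\C(r_n)\ge n$ forces $w_n(r_n)$ to be exponentially small. By the (conditional) coding theorem $w_n(v)\le 2^{-\K(v\mid n)+O(1)}$, and from $\C(r_n)\ge n$ one gets $\K(r_n\mid n)\ge \C(r_n\mid n)-O(1)\ge n-O(\log n)$, hence $w_n(r_n)\le 2^{-n+O(\log n)}$. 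In words: the set of oracles producing $r_n$ at position $n$ has exponentially small measure. This is precisely where incompressibility of $r_n$ is used, and it is the quantitative engine of the argument.

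Next I would design, for each level $k$, a $Z$-c.e. open set $U_k$ as follows. For every sufficiently large $n$ and every string $v$ with $\C^Z(v)\le n-k\log n$ (this list is $Z$-enumerable because $\C^Z$ is $Z$-upper-semicomputable), I start enumerating cylinders $[\sigma]$ with $\Phi^\sigma(n)\downarrow=v$ and put them into $U_k$, but I cap the total measure contributed by each individual $v$ at $2^{-n+O(\log n)}$, the bound obtained above. The measure estimate is then immediate: there are at most $2^{\,n-k\log n+1}$ admissible values $v$ at level $n$, each contributing at most $2^{-n+O(\log n)}$, so level $n$ contributes at most $n^{O(1)-k}$, and $\mu(U_k)\le\sum_{n}2\,n^{O(1)-k}\to 0$ as $k\to\infty$. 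After passing to a subsequence $k_j$ with $\mu(U_{k_j})\le 2^{-j}$, the sets $V_j=U_{k_j}$ form a genuine $Z$-Martin-L\"of test.

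Finally I would check that $Y$ is captured. Suppose the conclusion fails, i.e.\ for every $c$ there is an $n$ with $\C^Z(r_n)<n-c\log n$; since $\C^Z(r_n)\ge 0$, such $n$ must be large, so for each $k$ we can find $n$ above the threshold with $\C^Z(r_n)\le n-k\log n$. For this $n$ the value $v=r_n$ is admissible at level $k$, and because $w_n(r_n)\le 2^{-n+O(\log n)}$ equals its own cap, the enumeration for $v=r_n$ never stops, so the entire set $\{W:\Phi^W(n)=r_n\}$, which contains $Y$, lands in $U_k$. Hence $Y\in\bigcap_k U_k$, contradicting that $Y$ is Martin-L\"of random relative to $Z$. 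This produces a single constant $c$ (coming from the $O(1)$ in the coding theorem together with the threshold exponent) with $\C^Z(r_n)\ge n-c\log n$ for all $n$, as required.

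I expect the main obstacle to be the measure bookkeeping for $U_k$: the per-value cap is essential and must be calibrated to $w_n(r_n)$ so that \emph{simultaneously} (i) the total measure of $U_k$ converges and tends to $0$ (which requires the cap to neutralize the roughly $2^n$ admissible values at each level), and (ii) the true value $r_n$ still fits entirely under its cap so that $Y$ is genuinely captured rather than cut off. Both requirements rest on the estimate $w_n(r_n)\le 2^{-n+O(\log n)}$, so carrying out the semimeasure/coding-theorem computation carefully and uniformly in $n$ is where the real work lies.
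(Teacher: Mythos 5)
Your proof is correct, but it takes a genuinely different route from the paper's. The paper proves a general inequality $\C(r)\le \C^Z(r)+\C^Y(r)+\mathbf{d}^Z(Y)+O(\log \C(r))$, where $\mathbf{d}^Z(Y)$ is the expectation-bounded randomness deficiency of $Y$ relative to $Z$, and then specializes it to $r=r_n$. The two arguments share the same quantitative engine: the measure of the oracles that ``know'' $r_n$ is roughly $2^{-n}$ because $\C(r_n)\ge n$ --- this is your coding-theorem bound $w_n(r_n)\le 2^{-n+O(\log n)}$, and it is the paper's claim that a random oracle decreases $\K(x)$ by $s$ with probability $O(2^{-s})$; both are proved by averaging the a priori probability over oracles. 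Where you diverge is in how $\C^Z$ enters: the paper describes the small effectively open set of compressing oracles using $r$ itself (at cost $\K^Z(r)+O(\log\K(r))$ bits relative to $Z$) and then invokes a lemma saying that elements of a small, simply describable open set have large deficiency; you instead take the union over all (at most $2^{n-k\log n+1}$) candidate values $v$ with $\C^Z(v)\le n-k\log n$ and control the measure with a per-value cap, obtaining an ordinary $Z$-Martin-L\"of test. Your route is more elementary --- it needs only the coding theorem and the definition of relative Martin-L\"of randomness, not the deficiency machinery --- at the price of the capping bookkeeping; the paper's route yields a reusable general inequality. One small polish: fix the constant $C$ in the cap $2^{-n+C\log n}$ (coming from the coding-theorem constant together with $\C(r_n\mid n)\ge n-O(\log n)$) \emph{before} defining $U_k$, so that the test is well defined and the enumeration for $v=r_n$ is provably never truncated.
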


\begin{proof}
To prove the lemma, it is enough to proof the following inequality:
    $$
\C(r)\le \C^Z(r) + \C^Y(r) + \mathbf{d}^Z(Y) + O(\log \C(r)),    
    $$
where $\mathbf{d}^Z(Y)$ stands for the expectation-bounded randomness deficiency of $Y$ with oracle $Z$.  (This deficiency was introduced by Levin and G\'acs, see~\cite{BienvenuGHRS2011} for details; $\mathbf{d}^Z(Y)$ is finite if and only if $Y$ is Martin-L\"of random with oracle $Z$. In this subsection we assume that the reader is familiar with the definition and properties of randomness deficiency.)

Indeed, for $r=r_n$ the value of $\C(r)$ is at least $n$; the value of $\C^Y(r)$ is $O(\log n)$, since $r_n$ is computable with oracle $Y$ from $n$; the deficiency is finite; finally, the last term is $O(\log n)$.  So we get $\C^Z(r_n)\ge n-O(\log n)$.

It remains to prove the inequality above. We may use prefix complexity $\K$ instead of plain complexity $\C$, since our inequality has logarithmic precision anyway (all complexities are bounded by $\C(r)$ and we have $O(\log\C(r))$ term). So we need to prove that 
 $$
\K(r)\le \K^Z(r) + \K^Y(r) + \mathbf{d}^Z(Y) + O(\log \K(r)),    
    $$
For given $r$ we consider all infinite sequences $\tilde{Y}$ that (being used as oracle) decrease the (prefix) complexity of $r$ from $\K(r)$ to $\K^Y(r)$. The set $W$ of all such sequences is effectively open (since only finite information about an oracle can be used). It contains $Y$ (by construction) and has small measure: we will show that its measure is $O(2^{-s})$ where $s=\K(r)-\K^Y(r)$ is the decrease in complexity. To describe $W$, it is sufficient to specify $r$ and $\K^Y(r)$, so the complexity of $W$ given $Z$ is bounded by $\K^Z(r)+O(\log\K(r))$. The last step: if an effectively open set $W$ of measure $2^{-p}$ has description of complexity at most $q$, all its elements have deficiency at least $p-O(\log p)-q$. (We apply this observation with $p=\K(r)-\K^Y(r)$ and $q=K^Z(r)+O(\log\K(r))$, using $Z$ as an oracle.)

Let us prove two statements used in this argument.

(1)~\emph{Let $x$ be a string. The probability that \textup(uniformly\textup) random oracle $Y$ decreases the prefix complexity of $x$ at least by some $s$, does not exceed $O(2^{-s})$.}

Assume that $\K(x)=t$. Let us first (uniformly) generate a random oracle $U$ and then generate a string according to a priori distribution $\mathbf{m}(x|U)$ using this oracle. Then we get a lower semicomputable discrete semimeasure on strings, and it is bounded by (oracle-free) a priori probability $\mathbf{m}(x)$. The probability to get $x$ in such a process is at least $\Omeg(p2^{-(t-s)})$, where $p$ is the probability to get an oracle $U$ that decreases complexity of $x$ from $t$ to $t-s$ (or more), since the probability to get $x$ using such an oracle is $\Omeg(\mathbf{m}(x|U))=\Omeg(2^{-(t-s)})$. Since $\mathbf{m}$ is maximal, we get $\Omeg(p2^{-(t-s)})= O(\mathbf{m}(x))=O(2^{-t})$, so $p=O(2^{-s})$. [Recall that discrete a priori probabilities $\mathbf{m}(x)$ and $\mathbf{m}(x|U)$ are equal to $2^{-\K(x)}$ and $2^{-\K(x|U)}$ respectively up to a constant factor.]

(2)~\emph{Let $W$ be an effectively open set of measure $2^{-p}$ whose description has prefix complexity at most $q$. Then all elements of $W$ have \textup(expectation-bounded\textup) randomness deficiency at least $p-O(\log p)-q$.}

To construct an expectation-bounded test, let us generate a program $v$ for effectively open set $V$ with probability $\mathbf{m}(v)$, and independently an integer $k$ with probability $\mathbf{m}(k)$. Then let us consider the indicator function $I_V$ that is equal to $1$ inside $V$ and to $0$ outside $V$, and multiply it by $2^k$. We trim the resulting function in such a way that its integral (w.r.t. uniform measure in the Cantor space) is bounded and the function remains unchanged if the integral does not exceed $1$. Then we add all these functions (with weights $\mathbf{m}(v)\cdot \mathbf{m}(k)$); the result is a test (has finite integral). On the other hand, one of the terms corresponds to $V=W$ and $k=p$, and this term remains untrimmed due to our assumptions, so the test is at least $2^p\mathbf{m}(v)\mathbf{m}(p)$, and that is what we need, since $\mathbf{m}(v)\ge \Omeg(2^{-q})$ and $\mathbf{m}(p)=2^{-O(\log p)}$.
 \qed
\end{proof}

\begin{cor}
Let $(r_n)$ be a sequence of strings such that $\C(r_n) \geq n$, and let $\mathcal{C}$ be a non-empty $\mathrm{\Pi}^0_1$-class. Then there exists $Z \in \mathcal{C}$ such that $\C^Z(r_n) \geq n - O(\log n)$. 
\end{cor}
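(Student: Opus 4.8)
The plan is to feed Lemma~\ref{lem:higuchi} a suitable pair $(Y,Z)$: a real $Y$ that Turing-computes the sequence $(r_n)$ together with an oracle $Z\in\mathcal{C}$ for which $Y$ is (uniformly) Martin-L\"of random relative to $Z$. Once such a pair is found the lemma immediately gives $\C^Z(r_n)\ge n-c\log n = n-O(\log n)$, which is the conclusion. To obtain the computing real, let $D=\bigoplus_n r_n$ be the real coding the whole sequence; by the Ku\v{c}era--G\'acs theorem (see~\cite{Nies2009}) there is a Martin-L\"of random $Y$ with $D\le_T Y$. Then $Y$ computes $(r_n)$, and since $r_n$ is recovered from $Y$ and $n$, we have $\C^Y(r_n)=O(\log n)$, exactly the bound used inside the proof of the lemma. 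So everything reduces to finding $Z\in\mathcal{C}$ that keeps $Y$ random relative to $Z$.

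The heart of the matter is thus a basis-type statement that I would isolate as a separate lemma: \emph{for every Martin-L\"of random $Y$ and every nonempty $\mathrm{\Pi}^0_1$ class $\mathcal{C}$ there is $Z\in\mathcal{C}$ with $Y$ Martin-L\"of random relative to $Z$.} The first observation is that the set $\{Z:\ Y\text{ is random relative to }Z\}$ has measure one: by van Lambalgen's theorem, for Lebesgue-almost every $Z$ the real $Z$ is random relative to $Y$, whence $Y\oplus Z$ is random, whence $Y$ is random relative to $Z$. This already disposes of the positive-measure case (in particular of the trivial case $\mathcal{C}=\cs$, where one may just take $Z$ computable). The main obstacle is that $\mathcal{C}$ may be \emph{null}, so a measure-one set of good oracles need not meet it. Note that the two obvious fixes both fail: a null class contains no $Z$ random relative to $Y$, and taking $Z$ low over $Y$ via the low basis theorem~\cite{JockuschS1972} does not help either, since a $Z$-test capturing $Y$ would then only compress into a $Y'$-test, which is harmless.

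The way I would get around this is to use the \emph{effectivity} of the bad set, not its measure. Fix a computable tree $T$ with $[T]=\mathcal{C}$, and suppose for contradiction that $Y$ is non-random relative to every $Z\in[T]$; writing $U^Z_k$ for the $k$-th level of the universal oracle test, this says $Y\in\bigcap_k U^Z_k$ for all $Z\in[T]$. For $\sigma\in T$ let $U^{[\sigma]}_k$ be the part of the test enumerated using only the oracle answers committed by $\sigma$; since $U^{[\sigma]}_k\subseteq U^Z_k$ for every $Z\supseteq\sigma$, we get $\mu(U^{[\sigma]}_k)\le 2^{-k}$. Collecting the sets $U^{[\sigma]}_k$ over $\sigma\in T$ with the level offset $k>|\sigma|+2\log(|\sigma|+1)$ yields an unrelativized c.e. family of total measure at most $\sum_\ell (\ell+1)^{-2}<\infty$, i.e.\ a Solovay test. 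Because each $Z\in[T]$ forces $Y$ into every level $U^Z_k$ using only a finite prefix of $Z$, one expects $Y$ to lie in $U^{[\sigma]}_k$ for infinitely many admissible pairs $(\sigma,k)$, so $Y$ would be covered infinitely often and fail randomness, a contradiction. The hard part will be exactly the bookkeeping that matches oracle use to test level: one must guarantee that for infinitely many $k$ some node $\sigma\in T$ drives $Y$ into $U^Z_k$ at a length $|\sigma|$ small enough to respect the offset, and this is where compactness of $[T]$ and the freedom to range over all $Z\in\mathcal{C}$ must be combined. Granting this basis lemma, we recover $Z\in\mathcal{C}$ with $Y$ random relative to $Z$ (in the uniform sense the lemma needs), and Lemma~\ref{lem:higuchi} finishes the argument.
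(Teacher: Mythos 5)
Your overall route is the paper's: Ku\v{c}era--G\'acs gives a Martin-L\"of random $Y$ computing $(r_n)$, a basis-for-randomness statement supplies $Z\in\mathcal{C}$ relative to which $Y$ stays random, and Lemma~\ref{lem:higuchi} finishes. The paper simply invokes the second step as a known theorem (the randomness basis theorem of Downey, Hirschfeldt, Miller and Nies, also due to Reimann and Slaman; it is the same result cited later in the proof of Theorem~\ref{mlr-c} as the ``low basis theorem for randomness''), so up to that citation your decomposition is identical to the paper's.

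The gap is in your proof of that basis lemma, and it sits exactly where you flag it. Your Solovay-test bookkeeping needs, for infinitely many $k$, some $\sigma\in T$ with $Y\in U^{[\sigma]}_k$ and $k>|\sigma|+2\log(|\sigma|+1)$; but nothing bounds, in terms of $k$, the oracle use of the enumeration of the part of $U^Z_k$ that covers $Y$, so such admissible pairs $(\sigma,k)$ may fail to exist and your test need not cover $Y$ at all. The standard proof does not sum over $\sigma$; it exploits the universal quantifier over the class. Set $\hat U_k=\{X:\ X\in U^Z_k\ \text{for all}\ Z\in\mathcal{C}\}$. By compactness of $[T]$, membership of $X$ in $\hat U_k$ is witnessed at a finite level of $T$ (for some $l$, every $\sigma\in T$ of length $l$ already enumerates a neighborhood of $X$ into $U^{[\sigma]}_k$), so $\hat U_k$ is a $\Sigma^0_1$ class uniformly in $k$; and $\mu(\hat U_k)\le 2^{-k}$ because $\hat U_k\subseteq U^{Z_0}_k$ for any fixed $Z_0\in\mathcal{C}$ --- this is where nonemptiness of $\mathcal{C}$ is used. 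Hence $(\hat U_k)$ is a Martin-L\"of test; since $Y$ is random, $Y\notin\hat U_k$ for some $k$, which produces $Z\in\mathcal{C}$ with $Y\notin U^Z_k$ and therefore $Y$ random relative to $Z$. With that repair (or by simply citing the basis theorem, as the paper does) your argument is complete; the rest of your write-up, including the use of Ku\v{c}era--G\'acs and the bound $\C^Y(r_n)=O(\log n)$, matches the paper.
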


\begin{proof}
Let $(r_n)$ be such a sequence of strings. By the Kucera-G\'acs theorem, there exists some Martin-L\"of random real  $Y$ that computes this sequence. By the basis for randomness theorem, there exist some $Z\in \mathcal{C}$ be such that $Y$ is random relative to $Z$. It remains to apply Lemma~\ref{lem:higuchi}.
\end{proof}

The proof of Theorem~\ref{thm:half-complexity-not-enough} now goes as follows. Let $\phi$ be a sentence not provable in $\PA$. Consider the $\mathrm{\Pi}^0_1$-class of (codes of) complete consistent extensions of $\PA \cup \{ \neg \phi\}$. It is a non-empty class since $\PA \cup \{ \neg \phi\}$ is a consistent computable set of axioms. By the above corollary, let $Z$ be a member of this class such that $\K^Z(r_n) \geq n-O(\log n)$. Let $T$ be the theory coded by $Z$. Since $T$ is complete, it declares the value of Kolmogorov complexity for every string. Let $\C_T$ be this version of Kolmogorov complexity. It is clear that $\C^Z \leq^+ \C_T$ since $\C_T$ is computable with oracle $Z$ and satisfies the quantitative restrictions (no more than $O(2^k)$ strings $u$ have $\C_T(u)<k$). Thus $\C_T(r_n) \geq n-O(\log n)$, so for some $c$ the theory $T$ contains the sentences $``\C(r_n) \geq n-c\log n"$ for all $n$, and also contains $\neg \phi$. Hence the axioms $``\C(r_n) \geq n -c\log n"$ do not prove $\phi$. \qed

\subsection{Independence of random axioms}\label{subsec:independence}

In section~\ref{sec:pp} we added (to \PA) random axioms of the form $\C(x)\ge n-c$ for several randomly chosen strings $x_1,\ldots,x_m$; we noted that if $m2^{-c}\ll 1$, all the added axioms are true with probability close to~$1$. A natural question arises: will these axioms be independent?

Evidently, with positive probability they can be dependent. Imagine that we add axioms $\C(x_1)\ge n-c$ and $\C(x_2)\ge n-c$ for two random strings $x_1$ and $x_2$ obtained by $2n$ coin tosses. It may happen (with positive probability) that $x_1=x_2$ or $x_1$ is so close to $x_2$ that they provably have the same complexity. (For example, the decompressor used in the definition of $\C$ could give the same complexity to strings that differ only in the last bit.) Or it may happen that the first axiom is false, then it implies everything (its negation is provable).

However, the probability of dependence between these two axioms is small. For example, consider the probability $\eps$ that 
$$\PA\vdash (\C(x_1)\ge n -c)  \Rightarrow (\C(x_2)\ge n-c)\eqno(*)$$
for a randomly chosen pair $(x_1,x_2)$. We want to show that this probability is small. Indeed, we can fix $x_2$ in such a way that the probability of $(*)$ for this $x_2$ and random $x_1$ is at least $\eps$. And Theorem~\ref{conservation} says that this is possible only if $\C(x_2)\ge n-c$ is provable (which implies $n=O(1)$ due to Chaitin's theorem) or $\eps \le 2^{-c}$. So for large enough $n$ the probability of $(*)$ does not exceed $2^{-c}$.

Similar results are true for other types of dependence. For example, we may consider three random strings $x_1,x_2,x_3$ and the event 
$$\PA\vdash (\C(x_1)\ge n -c)  \Rightarrow (\C(x_2)\ge n-c)\lor (\C(x_3)\ge n-c)$$
This event also has probability at most $2^{-c}$ for large enough $n$. Indeed, the right hand side implies that $\C(x_2,x_3)\ge n-c-O(1)$ (the pair has large complexity if one of its components has large complexity), and we can use the same argument.

One more type of dependence: consider the event
$$\PA\vdash (\C(x_1)\ge n -c)  \land (\C(x_2)\ge n-c)\Rightarrow (\C(x_3)\ge n-c);\eqno(**)$$
let its probability (for independent random $x_1,x_2,x_3$) be $\varepsilon$. Then for some $x_3$ the probability of this event (for random $x_1$ and $x_2$) is at least $\eps$. Then we can use the same argument as in the proof of Theorem~\ref{conservation}. We can prove in \PA\ that the fraction of pairs $(x_1,x_2)$ such that left hand side of the implication is false, does not exceed $2\cdot 2^{-c}$ (each of the two conjuncts is false with probability at most $2^{-c}$). Therefore, if the fraction of pairs $(x_1,x_2)$ such that $(**)$ happens is greater that $2\cdot 2^{-c}$, we can form a disjunction and then prove $\C(x_3)\ge n-c$ without additional axioms.

Similar reasoning can be applied to other kind of dependencies, so three random axioms
$$\C(x_1)\ge n-c, \quad \C(x_2)\ge n-c,\quad \C(x_3)\ge n-c$$
are independent with probability $1-O(2^{-c})$ for sufficiently large $n$. Here by the independence of the statements $T_1,T_2,T_3$ we mean that each of $8$ possible combinations of the form 
$$(\lnot) T_1,\quad (\lnot) T_2, \quad (\lnot) T_3$$
(with or without negations) is consistent with \PA.

A similar result (with the same proof) is true for any constant number of randomly chosen axioms: 

\begin{thm}\label{thm:independence}
Fix some constant $m$. Let $x_1,\ldots,x_m$ be $m$ independent uniformly randomly chosen strings of length $n$, and consider $m$ statements
    $$
\C(x_1)\ge n-c, \quad \C(x_2)\ge n-c,\ \ldots\ , \C(x_m)\ge n-c.
    $$
For large enough $n$ they are \PA-independent \textup(all $2^m$ combinations of these statements, with negations or not, are consistent with \PA\textup) with probability $1-O(2^{-c})$, where the constant in $O$-notation depends on $m$ but not on~$n$ and~$c$.

\end{thm}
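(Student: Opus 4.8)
For fixed $m$, if $x_1,\ldots,x_m$ are independent uniform random strings of length $n$, then the $m$ statements $\C(x_i)\ge n-c$ are $\PA$-independent (all $2^m$ sign combinations consistent with $\PA$) with probability $1-O(2^{-c})$, where the constant depends on $m$ but not on $n,c$.

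**Plan.** The plan is to reduce $\PA$-independence to a finite list of "dependence" events, each of which I bound by $O(2^{-c})$ using the conservation theorem (Theorem~\ref{conservation}), exactly as in the worked examples for $m=2,3$ immediately preceding the statement. I would first observe that a sign combination $(\lnot)T_1,\ldots,(\lnot)T_m$ is \emph{inconsistent} with $\PA$ iff the conjunction of the chosen literals proves $\bot$, i.e. iff $\PA$ proves the implication from the positive conjuncts to the disjunction of the negated ones:
$$\PA\vdash \Bigl(\bigwedge_{i\in I}\C(x_i)\ge n-c\Bigr)\Rightarrow\Bigl(\bigvee_{j\notin I}\C(x_j)\ge n-c\Bigr),$$
where $I$ is the set of indices taken positively. (When $I=\{1,\dots,m\}$ the right side is empty, i.e. $\bot$; when $I=\emptyset$ the left side is empty, i.e. $\top$, and the corresponding all-negative combination can only be inconsistent if some $\C(x_j)\ge n-c$ is provable, which forces $n=O(1)$ by Chaitin's theorem.) Since there are only $2^m$ sign patterns, by a union bound it suffices to show that each such implication holds with probability $O(2^{-c})$.

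**Bounding one implication.** Fix a nonempty $I$ and a nonempty complement (the degenerate cases are handled separately as above). I would condition on the positively-indexed strings $\{x_i\}_{i\in I}$ being the "worst case": by averaging, there is a fixed choice of $\{x_i\}_{i\in I}$ for which the conditional probability over the remaining $\{x_j\}_{j\notin I}$ of the implication holding is at least its overall probability $\eps$. Now the key algebraic fact is that the disjunction on the right implies a \emph{lower bound on the joint complexity}: if some $\C(x_j)\ge n-c$ then the tuple $(x_j)_{j\notin I}$ has complexity $\ge n-c-O(1)$ (a tuple is at least as complex as any of its components, up to an additive constant absorbed by the fixed $I$-part). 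Thus, after fixing $\{x_i\}_{i\in I}$, the event forces
$$\PA\vdash \C\bigl((x_j)_{j\notin I}\bigr)\ge n-c-O(1),$$
for a \emph{single} complexity statement about the random tuple $(x_j)_{j\notin I}$, which is uniform on strings of length $(m-|I|)\cdot n$. Applying Theorem~\ref{conservation} (or rather the conservation argument directly, with the set $A$ being all strings of the appropriate length and $\delta=2^{-c}\cdot 2^{O(1)}$), this probability is at most $2^{-c+O(1)}=O(2^{-c})$, unless the statement is already provable—but provability again forces $n=O(1)$ by Chaitin's theorem, so for large $n$ this does not occur.

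**Main obstacle and bookkeeping.** The conceptual core—bounding a single dependence by $O(2^{-c})$ via conservation—is already fully illustrated in the text for $m\le 3$, so the real work is organizational: (i) showing that logical inconsistency of a sign pattern is \emph{exactly} a provable implication of the stated shape, so that no other form of dependence can slip through; and (ii) verifying that the "disjunction $\Rightarrow$ large joint complexity" step costs only an additive $O(1)$ independent of $n$ and $c$ (the constant depends on $m$ through the fixed $I$-part, which is exactly the claimed $m$-dependence of the $O$-constant). The only mildly delicate point is ensuring the fixing of $\{x_i\}_{i\in I}$ and the reduction to one complexity statement are compatible with a \emph{single} application of the conservation bound; since there are only finitely many patterns and the reduction is uniform, the union bound over the $2^m$ patterns yields the final $O(2^{-c})$ with an $m$-dependent constant, completing the proof.
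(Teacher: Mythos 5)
Your global structure is the right one and matches the paper's intent: reduce $\PA$-independence to the $2^m$ sign patterns, observe that inconsistency of the pattern with positive set $I$ is exactly the provability of $\bigwedge_{i\in I}\C(x_i)\ge n-c\Rightarrow\bigvee_{j\notin I}\C(x_j)\ge n-c$, collapse the disjunction into the single statement $\C\bigl((x_j)_{j\notin I}\bigr)\ge n-c-O(1)$, handle the two degenerate patterns separately, and finish with a union bound. (The paper's proof is literally its worked examples for $m\le 3$ plus the remark that the same reasoning generalizes.) The gap is in the one step that carries all the weight: you have swapped which group of strings is fixed and which is randomized. You fix the \emph{hypothesis} tuple $(x_i)_{i\in I}$ to a worst case and then try to bound, over the random \emph{conclusion} tuple $(x_j)_{j\notin I}$, the probability that $\PA$ proves the implication. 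Theorem~\ref{conservation} cannot be invoked there: it bounds the probability that a proof strategy with \emph{randomized axioms} derives a \emph{fixed} formula, whereas in your setup the added axioms (the conjunction over $I$) are fixed and the formula being derived varies with the random choice. Moreover, the bound you want for the worst-case hypothesis is far from automatic: the strength of $\PA$ plus a conjunction of statements $\C(x_i)\ge n-c$ depends drastically on which strings $x_i$ occur (compare Theorem~\ref{thm:finite-power}, where one well-chosen axiom of this type proves essentially all true $\mathrm{\Pi}_1$-statements of complexity up to $n$, with Theorem~\ref{a_lot_is_not_enough}); the averaging step hands you precisely such an adversarial hypothesis, and nothing you have said controls the measure of conclusion tuples that such a theory can handle.

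The orientation in the paper (see the discussion of the event $(**)$ preceding the theorem) is the reverse, and it makes the conservation theorem apply verbatim: fix the \emph{conclusion} tuple $(x_j)_{j\notin I}$ to the worst case, so that the target $\phi$, namely $\C\bigl((x_j)_{j\notin I}\bigr)\ge n-c-O(1)$, is a single fixed formula; since $\PA$ proves that at least a $1-|I|\cdot 2^{-c}$ fraction of hypothesis tuples satisfies $\bigwedge_{i\in I}\C(x_i)\ge n-c$, a proof strategy with capital $|I|\cdot 2^{-c}$ can add these as random axioms and then derive $\phi$ with probability at least $\varepsilon$; Theorem~\ref{conservation} then forces $\varepsilon\le|I|\cdot 2^{-c}$ unless $\PA\vdash\phi$, and the latter is excluded for large $n$ by Chaitin's theorem. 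Your treatment of the all-negative pattern contains the same kind of slip in miniature ($\PA$ can prove a disjunction without proving any disjunct), but there the joint-complexity trick you already use repairs it at once. With the roles of the two groups of variables interchanged, the rest of your bookkeeping (the $O(1)$ loss for the tuple, the $m$-dependent constant, the union bound over the $2^m$ patterns) goes through.
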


\subsection{The strange case of disjunction}

So far, the results we have established about the axiomatic power of Kolmogorov complexity were related to its computability-theoretic properties. In this section, we present an interesting example of a setting where the axiomatic power of a family of axioms is (in some sense) weaker than its computational power. This family consists of axioms of type ``$\C(x)=n_1 \vee \C(x)=n_2$" where one axiom of this type is given for each~$x$. The following result of Beigel et al.\ tells us that having access, for each~$x$, to a pair of possible values of~$\C$ is enough to reconstruct~$\C$:

\begin{prop}[\cite{BeigelBFFGLMST2006}]
Let $f:\fs \rightarrow \N^2$ be a function such that for each~$x$, if $f(x)=(n_1,n_2)$ then $\C(x) \in \{n_1,n_2\}$. Then the function~$f$ Turing-computes the function~$\C$. 
\end{prop}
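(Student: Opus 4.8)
The plan is to use $f$ as an oracle and to determine, for each $x$, which of the two values it returns is the true one. Write $f(x)=(a_x,b_x)$ with $a_x\le b_x$; if $a_x=b_x$ there is nothing to do, so the whole problem is the ambiguous case $a_x<b_x$. Note that $f$ supplies, simultaneously, an $f$-computable upper bound $b_x\ge\C(x)$ and an $f$-computable lower bound $a_x\le\C(x)$, and the task is exactly to decide whether $\C(x)=a_x$ or $\C(x)=b_x$.

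The easy half is to recognise the smaller value. I would run the time-bounded approximation $\C^t(x)$, which is computable and decreases to $\C(x)$ (this is the function used in the proof of Theorem~\ref{thm:all-complexities}). As soon as we observe $\C^t(x)<b_x$ we may conclude $\C(x)=a_x$, since then $\C(x)\le\C^t(x)<b_x$ rules out the larger candidate. Consequently the set $\{x: a_x<b_x,\ \C(x)=a_x\}$ is c.e.\ relative to $f$, and dually the ``hard'' set $S=\{x: a_x<b_x,\ \C(x)=b_x\}$ is co-c.e.\ relative to $f$. At this point $\C$ is already computable from $f$ in the limit; the genuine content is to turn this into an honest Turing reduction, i.e.\ to \emph{certify} membership in $S$ at a finite stage.

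The key step, and the step I expect to be the main obstacle, is therefore to show that $S$ is also c.e.\ relative to $f$, so that $S$ (hence $\C$) is computable. Here I would bring in a counting argument based on the standard bound that fewer than $2^{k+1}$ strings have complexity at most $k$. The idea is to compute, level by level in increasing $k$, the finite sets $U_k=\{x:\C(x)\le k\}$. Once $U_{k-1}$ is known, the complexity-$k$ strings become \emph{positively} recognisable: any $x$ with $b_x=k$ and $x\notin U_{k-1}$ must have $\C(x)=k$ (it is $\le k$ because $b_x=k$, and $\ge k$ because it lies outside $U_{k-1}$), while any $x$ with $a_x=k<b_x$ and $\C(x)=k$ is caught when the approximation reaches $\C^t(x)\le k$. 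Thus $\{x:\C(x)=k\}$ is c.e.\ given $U_{k-1}$; what is missing is a way to know when \emph{all} of it has appeared, i.e.\ the exact value of $\#U_k$. Matching the increasing lower bound produced by the approximation against the cardinality bound $\#U_k<2^{k+1}$ is precisely the delicate point: a string currently resting at the value $b_x$ in the approximation may still drop later, so detecting completion of a level is not automatic. Resolving this convergence-detection problem is the heart of Beigel et al.'s argument, and I would expect the combinatorial bound on the number of low-complexity strings to have to be used more cleverly than the naive count above.

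Once $S$ is shown computable relative to $f$, the reduction is immediate: on input $x$ with $a_x<b_x$, decide whether $x\in S$ and output $b_x$ or $a_x$ accordingly. As a consistency check, note that this computation amounts to deciding every predicate ``$\C(x)\le k$'', which lets $f$ solve the halting problem by the very mechanism of Theorem~\ref{thm:all-complexities} (exact complexities up to length $N$ bound the convergence time of the approximation and hence decide halting for short programs); since $\C\le_T\mathbf{0}'$ holds as well, the target $\C\le_T f$ is in fact equivalent to $\mathbf{0}'\le_T f$. Either way, all the difficulty is concentrated in the cardinality/convergence-detection step, with the surrounding reduction being routine.
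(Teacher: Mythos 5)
There is a genuine gap here, and you have in fact located it yourself. Everything up to and including the observation that $S=\{x: a_x<b_x,\ \C(x)=b_x\}$ is co-c.e.\ relative to $f$ is correct but essentially free: it uses only the upper semicomputability of $\C$ together with the $f$-computable upper bound $b_x$, and it delivers only limit-computability of $\C$ from $f$ (i.e.\ $\C\le_T f'$), not the claimed Turing reduction. The entire content of the proposition is the step you defer, namely certifying $\C(x)=b_x$ at a finite stage. The level-by-level scheme does not close it: granting that $U_{k-1}=\{y:\C(y)\le k-1\}$ is known exactly, the set $\{y:\C(y)=k\}$ is indeed c.e.\ relative to $f$, but promoting this to ``computable'' requires knowing when the enumeration of $U_k$ has terminated, i.e.\ knowing $\#U_k$, and the bound $\#U_k<2^{k+1}$ gives no handle on that number. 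The exact cardinalities $\#U_k$ carry essentially the same information as $\mathbf{0}'$ (as your own ``consistency check'' shows, $\C\equiv_T\mathbf{0}'$), so computing them from $f$ is not a preparatory step but the theorem itself restated; saying the counting bound ``must be used more cleverly'' concedes that the proof is absent rather than supplying it. A further (smaller) issue is that even the enumeration of level $k$ as you describe it requires querying $f$ on all strings, including arbitrarily long ones of small complexity, so the finiteness of $U_k$ cannot be exploited by brute-force search over short strings.

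For the comparison you were asked to make: the paper itself contains no proof of this proposition --- it is imported verbatim from Beigel et al.\ --- so there is no in-paper argument against which to measure your route. The proof in the cited source is a genuine combinatorial argument specific to $2$-enumerators, not a repair of the convergence-detection step above; your proposal should therefore be read as a correct reduction of the statement to its hardest sub-claim, with that sub-claim left unproved.
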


Based on that this results and the results presented so far in the paper, one could conjecture that if for each~$x$ we are given a (true) axiom of type ``\,$\C(x)=n_1 \vee \C(x)=n_2$\,'', then we are able to prove all true statements of type ``\,$\C(x)=n$\,''. Surprisingly, this turns out to be false. 

 \begin{thm}\label{thm:strange-disjunction}
Let $\phi$ be a formula which is not provable in~$\PA$. There exists a family~$F$ of true axioms of type ``\,$\C(x)=n_1 \vee \C(x)=n_2$'', where one such axiom is given for any~$x$, such that $\PA \cup F$ does not prove $\phi$.
\end{thm}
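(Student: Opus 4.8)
The plan is to exploit the gap, foreshadowed in the surrounding discussion, between the \emph{information} carried by the pairs (which by the cited result of Beigel et al.\ Turing-computes $\C$) and the far weaker \emph{deductive} content of the corresponding disjunctive axioms. I will read the entire family $F$ off from a single complete consistent extension $T$ of $\PA \cup \{\neg\phi\}$, which exists by Lindenbaum's lemma because $\phi$ is unprovable and hence $\PA \cup \{\neg\phi\}$ is consistent. For each $x$ the axiom will be a disjunction one of whose disjuncts is the \emph{true} complexity of $x$ (making the axiom true) and the other of which is the complexity of $x$ \emph{according to} $T$ (making $T$ prove the disjunction, so that $F \subseteq T$).

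The one point needing care is that $T$ assigns a well-defined standard complexity to each string. Fix $x$ and write $c_x = \C(x)$ for its true complexity. The sentence ``$\C(x) \le c_x$'' is a true $\mathrm{\Sigma}_1$-statement, hence provable in $\PA$ and so in $T$, giving $T \vdash \bigvee_{k \le c_x}(\C(x) = k)$. Since $T$ is complete and consistent, exactly one $k \in \{0,\dots,c_x\}$ satisfies $T \vdash \C(x) = k$; denote it $\C_T(x)$. Note $\C_T(x) \le c_x$, possibly strictly, since the $\mathrm{\Pi}_1$-statement ``$\C(x) \ge c_x$'' need not lie in $T$ --- this is exactly where $\PA$'s incompleteness enters.

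Now for each $x$ let $F_x$ be ``$\C(x) = c_x \vee \C(x) = \C_T(x)$'' and put $F = \{F_x : x \in \fs\}$. Each $F_x$ is true (its first disjunct is), so $F$ is a legitimate family of true axioms of the prescribed form, one for each $x$ (note $F$ need not be computable, which the statement does not demand). Since $T \vdash \C(x) = \C_T(x)$ we get $T \vdash F_x$, hence $F \subseteq T$; as also $\PA \subseteq T$, the consistent theory $T$ contains $\PA \cup F \cup \{\neg\phi\}$, so $\PA \cup F$ is consistent with $\neg\phi$ and therefore does not prove $\phi$.

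The whole content lies in the contrast with Beigel et al.: the selector $x \mapsto (c_x, \C_T(x))$ does Turing-compute $\C$, yet the model of $T$ satisfies every disjunction in $F$ by uniformly picking the ``decoy'' value $\C_T(x)$, so $\PA \cup F$ cannot perform that reduction internally. The only genuine obstacle is the middle step --- verifying that a complete consistent extension decides a single standard value $\C_T(x)$ for each $x$, and that this value is a legitimate second disjunct; everything else is bookkeeping.
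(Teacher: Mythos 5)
Your proof is correct and is essentially the paper's own argument: the paper works with a model $\mathfrak{M}$ of $\PA+\lnot\phi$ and pairs the true value $\C(x)$ with the value $\mathfrak{C}(x)$ that the model assigns, which is exactly your $\C_T(x)$ for $T$ a complete consistent extension of $\PA\cup\{\lnot\phi\}$. Your justification that $\C_T(x)$ is a well-defined standard number (via provability of the true $\mathrm{\Sigma}_1$-sentence ``$\C(x)\le c_x$'') matches the paper's observation that $\mathfrak{C}(x)$ is standard and at most $\C(x)$.
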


\begin{proof}
Since $\phi$ is not provable in $\PA$, the theory $\PA+\lnot\varphi$ is consistent and has some model $\mathfrak{M}$. In this model a formula that defines Kolmogorov complexity function, determines some function $\mathfrak{C}\colon \mathfrak{M}\to\mathfrak{M}$. Note that for standard natural numbers $n\in\mathfrak{M}$ the values $\mathfrak{C}(n)$ are standard (since $\C(x)\le \log x +O(1)$ is provable in $\PA$). The value $\mathfrak{C}(x)$ may coincide with $\C(x)$ or they may differ (in this case $\mathfrak{C}(x)$ is smaller, since a standard description for $x$ remains valid in all models of \PA). Then we add the axioms 
     $$
 \C(x)=\overline {\C(x)}\ \lor \ \C(x)=\overline{\mathfrak{C}(x)}
     $$ 
(containing numerals both for true value $\C(x)$ and $\mathfrak{M}$-value $\mathfrak{C}(x)$, for all strings $x$) to $\PA$. Both the standard model and $\mathfrak{M}$ are models of this theory. Therefore, these axioms are true in the standard model but do not imply $\phi$.
\qed
\end{proof}


\subsection{Axioms on conditional complexity}

What happens if we switch to conditional complexity and add true statements of the form $\C(x|y)\ge n$? The unconditional complexity is a special case of conditional one, so if we add all true statements of this form, we can prove all true $\mathrm{\Pi}_1$ statements.

However, there is an important difference between conditional and unconditional case. The following theorem shows that now we do not need unbounded values of $n$ to get all true $\mathrm{\Pi}_1$-statements.

\begin{thm}\label{thm:cc}
There exists some constant $c$ such that $\PA$ together with all true statements of the type $\C(x|y)\ge c$ proves all true $\mathrm{\Pi}_1$ statements.
\end{thm}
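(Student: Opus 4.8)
The plan is to exploit the fact that conditional complexity $\C(x|y)$ can be made to depend, via a cleverly chosen conditioning string $y$, on whether an arbitrary program halts. The key intuition is that for an appropriate constant $c$, a statement of the form $\C(x|y)\ge c$ is false only when $x$ has a description of length $<c$ relative to $y$, i.e. when $x$ is one of the $O(2^c)$ strings that are very simple given $y$. We want to arrange a single conditioning string $y$ (for each program $p$) so that the pattern of which short strings are simple relative to $y$ encodes the halting behaviour of $p$.

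First I would make precise the encoding. Recall the argument of Theorem~\ref{thm:all-complexities}: knowing the complexities of enough strings lets us bound the halting time $B(N)$ and hence decide halting for short programs. I would try to compress this so that a \emph{fixed} amount of conditional complexity information suffices. The idea is to build, for each program $p$, a conditioning string $y_p$ (computable from $p$) and to look at the finite table of values $\C(x|y_p)$ for all strings $x$ of length below some fixed bound $\ell$ (where $\ell$ depends only on $c$, not on $p$). Since there are only finitely many such $x$, the true statements ``$\C(x|y_p)\ge c$'' carry a bounded but \emph{nontrivial} amount of information — and the plan is to design the reduction $D$ defining $\C(\cdot|\cdot)$ so that this bounded table already determines whether $p$ halts. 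Concretely, I would arrange $D$ so that on input conditioning string $y_p$, the short descriptions decode to outputs whose presence or absence in the complexity table flips exactly when $p$ halts; because $D$ can be any partial computable interpreter, we have the freedom to hard-wire such behaviour while keeping the optimality of $\C$ (absorbing the construction into the universal $O(1)$ overhead).

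The next step is to verify that the whole argument formalizes in \PA, exactly as in Theorems~\ref{thm:all-complexities} and~\ref{thm:finite-power}. Given the finitely many axioms ``$\C(x|y_p)\ge c$'' (true ones) together with the provable upper bounds on conditional complexity, \PA\ can prove the value of the relevant finite complexity table relative to $y_p$, prove the corresponding bound on the halting time, and hence prove non-termination of $p$ whenever $p$ does not halt. As in the earlier proofs, every true $\mathrm{\Pi}_1$-statement is provably equivalent to the non-termination of some program $p$ (the one searching for a counterexample), so deriving all such non-terminations gives all true $\mathrm{\Pi}_1$-statements.

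The main obstacle I expect is the combinatorial/coding step: building a single interpreter $D$ (provably optimal, so that $\C_D(\cdot|\cdot)$ really is the conditional complexity up to $O(1)$) whose bounded low-complexity table relative to $y_p$ rigidly encodes the halting of $p$. One must ensure that the constant $c$ — the threshold in ``$\C(x|y)\ge c$'' — is chosen \emph{after} fixing this interpreter, uniformly over all $p$, and that the construction does not accidentally give some string a short description that destroys the encoding (a collision with generic short programs). Managing these collisions while keeping $c$ a true constant independent of $p$, and checking that the self-referential halting-time reasoning still goes through in \PA\ with only a bounded complexity table available, is where the real care is needed; the rest follows the template already established in this section.
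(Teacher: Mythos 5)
Your plan founders on the collision problem you flag at the very end, and that problem is not a technical nuisance to be ``managed'' --- it is a fundamental obstruction. For your encoding to work you need, for each program $p$, a designated string $x_0$ (or a finite designated set $S_p$) such that (a) \PA\ proves ``$p$ halts $\Rightarrow \C(x_0|y_p)<c$'' and (b) ``$\C(x_0|y_p)\ge c$'' is actually \emph{true} when $p$ does not halt, so that it is available as an axiom. For (a) to be provable, and for the axiom in (b) to name a concrete string in both cases, $x_0$ must be specified in advance, computably from $p$; and since $y_p$ must essentially contain $p$ (otherwise no program of length $<c$ can simulate $p$ given only $y_p$), the string $x_0$ is computable from $y_p$ alone, whence $\C(x_0|y_p)=O(1)<c$ unconditionally and (b) fails. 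The same happens for any finite set $S_p$ computably attached to $p$: its first element is always simple given $y_p$. No choice of (provably optimal) interpreter $D$ evades this, because optimality forces $\C_D(x|y)\le \C(x|y)+O(1)$, so every string you can point to from $y_p$ sits in the low-complexity table regardless of whether $p$ halts. A bounded table of conditional complexities relative to a condition that determines $p$ simply cannot carry the bit ``does $p$ halt'' in the direct way you propose.

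The paper's proof is indirect precisely for this reason. It uses the axioms as an oracle letting the theory provably identify, for any condition $u$, the \emph{first} string $y$ with $\C(y|u)\ge c$: all earlier strings satisfy $\C(\cdot|u)<c$, a true $\mathrm{\Sigma}_1$ fact provable in \PA, and the first good one is an added axiom. This non-computable selection (absent from your scheme) yields a string provably different from the output of any fixed terminating computation, and a diagonalization over ``sequences with finite support'' --- the standard argument that a DNC function computes strings of high complexity --- amplifies the constant $c$ bits of conditional information into strings of arbitrarily high \emph{unconditional} complexity. Only then does the halting-time argument of Theorems~\ref{thm:all-complexities} and~\ref{thm:finite-power} apply to give all true $\mathrm{\Pi}_1$-statements. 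That amplification step is the key idea missing from your proposal; your final formalization paragraph is fine in outline, but as it stands there is nothing strong enough for it to formalize.
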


\begin{proof}
   Strangely, the proof is quite indirect here. It use the results from recursion theory about DNC (=diagonally non-computable) functions saying that (1)~the mass problem of constructing a DNC function is equvalent to the mass problem ``given $n$, construct some string of complexity at least $n$'', and that (2)~every enumerable oracle that computes DNC function, is Turing complete. Since we need to translate these results from computation language to proof language, we need to reproduce them first for our special case.
   
\begin{lemma}
There exist a constant $c$ with the following property: having access to an oracle that for every string $x$ gives us some string $y$ such that $\C(y|x)>c$, we can for every $n$ compute a string of complexity at least~$n$.
\end{lemma}   
  
To prove the lemma, fix some programming language. Given some program $p$ and input $u$, we cannot say whether the computation of $p$ on input $u$ terminates. However, we know that if it terminates, the output will have $O(1)$-complexity conditional to $p$ and $u$. The constant on $O(1)$ depends on the programming language (and on the choice of specific complexity function), but not on~$p$ and~$x$. So, having an oracle that for given $x$ produces $y$ such that $\C(y|x)>c$ where $c$ is a bigger constant, we are able for every $p$ and $u$ specify some $y$ that is guaranteed to be different from the output of $p$ on $u$ (if this output exists).  

Our task is, however, more difficult: we want to construct a string of complexity greater than $n$, so we need this string to be different from the outputs of many computations (for all programs of length $n$ or less). This can be done as follows: we may assume that the outputs of computations are not strings but infinite sequences of strings that have only finitely many non-empty terms. Such ``sequences with finite support'' form a countable set and one can establish a computable one-to-one correspondence between such sequences and strings. Now, having finitely many computations whose outputs are such sequences, we construct a sequence that differs from the first computation in the first term, from the second computation in the second term, etc. This can be done using the oracle, as we have seen above. Lemma is proven.\qed

Returning to Theorem~\ref{thm:cc}, consider the following process that uses an oracle providing exact values for $\C(\cdot|\cdot)$. Given some $n$, we construct the string of complexity greater than $n$ as described above. When a string different from the output of some program $p$ on some input $u$ is needed, we take the \emph{first} string $y$ such that $\C(y|u,p)$ exceeds some constant $c$ (large enough and fixed in advance). Finally we produce some string $r_n$ that has complexity greater than $n$. 

While looking for the first strings (denoted by $y$ in the previous paragraph) we observe that all the previous strings have complexity (with required conditions) less than $c$. Consider the time $t$ needed to establish this fact, i.e., some $t$ such that not only true conditional complexity $\C$ but also its upper bound $\C^t$ (complexity with time bound $t$) becomes less than~$c$. Let us show that every number greater than $t$ has complexity at least $n-O(\log n)$. Indeed, knowing some number $t'>t$ and $n$, we do not need the oracle anymore, since we can use $\C^{t'}$ instead of $\C$ and get the same strings $y$. This procedure is computable, so it cannot increase complexity more than by $O(1)$, and we get a string of complexity greater than $n$. Since we need only $O(\log n)$ bits to specify $n$, the number $t'$ should have complexity at least $n-O(\log n)$.  As before, this implies that $t$ steps are enough for every terminating program (without input) of size $n-O(\log n)$ to terminate, otherwise this program would describe the number of steps needed for termination, and this number $t'$ would be greater than $t$ and still have small complexity.

Now we need to formalize the reasoning above in \PA. Having all true statements of the form $\C(u|v)>c$ as axioms, we provably know the first strings $y$ that are found during the described process. (Indeed, we can also prove that previous strings have small complexity, since this is an existential statement.)  We can also prove that the process intended to generate a string of complexity greater than $n$, achieves its goal. Then we can provably establish the value of $t$, and also prove that every number greater than $t$ has complexity $n-O(\log n)$ (with some specific constant in the $O$-notation). Finally, we prove that every program of size $n-O(\log n)$ that does not terminate in $t$ steps, never terminates, therefore proving all true $\mathrm{\Pi}_1$ statements of complexity at most $n-O(\log n)$. Since $n$ is arbitrary, we can prove all true $\mathrm{\Pi}_1$-statements.\qed
\end{proof}

\begin{rem}
  It would be nice to find a more direct way to construct string of arbitrary high complexity if we know all the pairs $(u,v)$ such that $\C(u|v)>c$. One can try to start with some $x_0$ that has complexity at least $c$, then find  some~$x_1$ such that $\C(x_1|x_0)>c$, then $x_2$ such that $\C(x_2|x_0,x_1)>c$, but this does not work because in the formula for the complexity of pair for plain complexity we have logarithmic error terms that can compensate $c$, and in the prefix version we also have the prefix complexity in the condition.
\end{rem}

\section{Adding information about Martin-L\"of random sequences}\label{section_martin_lof_random}

Up to now we considered additional axioms saying that some \emph{strings} have high complexity. In this section we want to extend \PA\ in a different way and claim for some \emph{infinite sequence} $X$ that $X$ is Martin-L\"of random. Some refinements are needed since an infinite sequence (unlike a string) cannot be made a part of one axiom. There are several ways to do this. Let us consider different possibilities.

\subsection{The theory $\MLR_c(X)$ and its properties}

One natural way to express that $X=x_0x_1\ldots$ is Martin-L\"of random is to make use of the Levin-Schnorr theorem, by fixing some constant~$c$ and to add axioms ``$\K(X \uh n)\geq n-c$'' for all $n$. Here $\K$ stands for prefix complexity (note that in the previous section we considered plain complexity which was more natural then, but all the results of that section also hold with $\K$ in place of $\C$) where $X\uh n$ stands for the $n$-bit prefix $x_0x_1\ldots x_{n-1}$ of $X$. We denote by $\MLR_c(X)$ the theory \PA\ enriched by these additional axioms. 

Let us fix $X$ and consider $\MLR_c(X)$ for different $c$. For small $c$ some of the additional axioms can be false; then their negation is provable and $\MLR_c(X)$ is inconsistent. As $c$ increases, the theory $\MLR_c(X)$ becomes weaker. If $X$ is Martin-L\"of random, then for large enough $c$ all statements in $\MLR_c(X)$ are true and $\MLR_c(X)$ is consistent. Note also that all axioms of $\MLR_c(X)$ are $\mathrm{\Pi}_1$-statements, so in the latter case $\MLR_c(X)$ is a part of the theory considered above ($\PA$ plus all true $\mathrm{\Pi}_1$-statements).

Intuitively, if we believe that ``$X$ is Martin-L\"of random" implies some~$\phi$, then we would expect $\phi$ to be provable in \emph{all} theories $\MLR_c(X)$. The next theorem shows that any such formula~$\phi$ is in fact already provable in~$\PA$. 

\begin{thm}\label{mlr-asympt-c}
Let $X$ be a Martin-L\"of random sequence. If $\phi$ is provable in all theories $\MLR_c(X)$, then $\phi$ is provable in $\PA$. 
\end{thm}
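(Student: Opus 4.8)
The plan is to prove the contrapositive by a computability-theoretic argument in the spirit of Theorem~\ref{thm:half-complexity-not-enough}, rather than by trying to mimic the proof of the conservation theorem (Theorem~\ref{conservation}). A direct probabilistic-proof reduction is awkward here because the axioms of $\MLR_c(X)$ concern a single \emph{fixed} sequence $X$ and its nested (hence correlated) prefixes, not freshly tossed coins; so instead I would argue model-theoretically. Assume $\phi$ is not provable in $\PA$. Since $\PA+\neg\phi$ is then consistent and computably axiomatised, the set $\mathcal{C}$ of (codes of) complete consistent extensions of $\PA+\neg\phi$ is a nonempty $\mathrm{\Pi}^0_1$ class. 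The goal is to produce a \emph{single} constant $c$ and a completion $T\in\mathcal{C}$ whose internally decided complexity function assigns to every prefix $X\uh n$ a value at least $n-c$; then $T$ contains all axioms of $\MLR_c(X)$ together with $\neg\phi$ and is consistent, so $\MLR_c(X)\nvdash\phi$, which is exactly what we need.

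The crucial input is that $X$ is \emph{given} to be Martin-L\"of random. By the basis for randomness theorem (the same one invoked in the corollary following Lemma~\ref{lem:higuchi}), there is some $Z\in\mathcal{C}$ such that $X$ is Martin-L\"of random relative to $Z$. By the relativised Levin--Schnorr theorem this yields a constant $c_0$ with $\K^Z(X\uh n)\ge n-c_0$ for all $n$. I would stress that this is a \emph{constant}-accuracy bound, not the $n-O(\log n)$ bound produced by Lemma~\ref{lem:higuchi}: we can afford the sharper bound precisely because $X$ itself is random, so there is no logarithmic overhead coming from ``reconstructing $X\uh n$ from an oracle''. This constant accuracy is exactly what the definition of $\MLR_c(X)$ (a single additive constant $c$ for all $n$) demands, and it is the heart of why the theorem is true.

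Next let $T$ be the theory coded by this $Z$ and let $\K_T$ be the complexity function it decides. Since $T$ is complete, for each string $u$ it fixes a value $\K_T(u)$, which is standard because $\PA$ proves $\K(u)\le |u|+O(1)$, and $\K_T$ is computable from $Z$. Exactly as in the proof of Theorem~\ref{thm:half-complexity-not-enough}, $\K_T$ is $Z$-computable and provably satisfies the Kraft inequality, so by optimality of $\K^Z$ we get $\K^Z\leq^+\K_T$. Combining with the previous step gives $\K_T(X\uh n)\ge n-c_0-O(1)$, a bound of the form $n-c$. Hence for every $n$ the sentence ``$\K(X\uh n)\ge n-c$'' lies in the complete theory $T$, so $T\supseteq\MLR_c(X)$; since $T$ is consistent and contains $\neg\phi$, the theory $\MLR_c(X)$ does not prove $\phi$, completing the contrapositive.

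The main obstacle, and the step deserving the most care, is precisely the passage from ``high relativised complexity'' to ``relativised randomness with a constant''. One must resist settling for Lemma~\ref{lem:higuchi} alone, which would give only $\K^Z(X\uh n)\ge n-O(\log n)$ and hence consistency of the weaker axioms ``$\K(X\uh n)\ge n-c\log n$'' rather than of the fixed-constant axioms needed here. Applying the basis for randomness theorem to $X$ itself (instead of to a random real that merely \emph{computes} a complexity-high sequence, as in Theorem~\ref{thm:half-complexity-not-enough}) is what removes the logarithmic loss. The remaining points---that $\mathcal{C}$ is a nonempty $\mathrm{\Pi}^0_1$ class, and that $\K_T$ is well-defined, standard-valued, $Z$-computable and Kraft-bounded---are routine and are handled exactly as in the proof of Theorem~\ref{thm:half-complexity-not-enough}.
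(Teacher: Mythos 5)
Your proof is correct, but it takes a genuinely different route from the paper's. The paper argues measure-theoretically: assuming $\phi$ is unprovable, it considers the effectively open sets $A_\phi^c=\{Y : \MLR_c(Y)\vdash\phi\}$ and shows $\mu(A_\phi^c)\leq 2^{-c}$ --- if the measure were larger, one could design a probabilistic proof strategy with capital $2^{-c}$ (prove in \PA\ that most $N$-bit strings $u$ satisfy $\MLR_c(u)$, pick one at random, add that axiom) that proves $\phi$ with probability exceeding $2^{-c}$, so the conservation theorem (Theorem~\ref{conservation}) would make $\phi$ provable in \PA. Hence $(A_\phi^c)_c$ is a Martin-L\"of test, and any $X$ for which $\phi$ is provable in every $\MLR_c(X)$ fails it and cannot be random. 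Your argument instead transplants the machinery the paper uses for Theorems~\ref{thm:half-complexity-not-enough} and~\ref{mlr-c}: the randomness basis theorem applied to the $\mathrm{\Pi}^0_1$ class of completions of $\PA+\neg\phi$, the relativized Levin--Schnorr theorem to get $\K^Z(X\uh n)\geq n-c_0$, and the coding-theorem comparison $\K^Z\leq^+\K_T$ for the internally decided, $Z$-computable, Kraft-bounded complexity function of the completion $T$; each of these steps checks out, and you correctly work with $T$'s internal values $\K_T(X\uh n)$ rather than claiming $T$ proves the axioms because they are true (which would fail, since the axioms are $\mathrm{\Pi}_1$ and $T$ need not be $\mathrm{\Sigma}_1$-sound). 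What each approach buys: the paper's proof stays within the probabilistic-proof framework of Section~\ref{sec:pp} and yields an explicit quantitative test covering every non-random ``counterexample'' $Y$ uniformly; yours exhibits a single consistent completion of $\PA+\neg\phi$ extending $\MLR_c(X)$ and identifies the witnessing constant $c$ as (up to $O(1)$) the randomness deficiency of $X$ relative to that completion. Your observation that applying the basis theorem to $X$ itself --- rather than to a random real that merely computes a complexity-high sequence, as in the corollary to Lemma~\ref{lem:higuchi} --- is what avoids the $O(\log n)$ loss and delivers the constant-accuracy bound that $\MLR_c$ demands, is exactly the right point to stress.
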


\begin{proof}
Let $X$ be a Martin-L\"of random sequence. We need to show that a formula $\phi$ provable in $\MLR_c(X)$ for all $c$ is actually provable in $\PA$ alone. Assume that $\phi$ is not provable; we will show that every sequence $X$ such that $\phi$ is provable in all $\MLR_c(X)$ is not random. This is done by constructing a Martin-L\"of test that covers $X$. For every $c$ consider the set $A_\phi^c$ of infinite binary sequences: 
$$ A_\phi^c = \{ Y ~ |~  \MLR_c(Y) \vdash \phi  \} $$
This is an effectively open set in the Cantor space (recall that each derivation uses only a finite number of axioms).  We claim that the uniform measure of this set is small. More precisely, $\mu(A^c_\phi) \leq 2^{-c}$, so $A_\phi^c$ forms a Martin-L\"of test that covers $X$ (if $\phi$ is provable in $\MLR_c(X)$ for all $c$). 

To prove that claim, suppose the contrary, i.e., $\mu(A^c_\phi) > 2^{-c}$. Since an effectively open set is a union of intervals, this implies that there exists an integer~$N$ and a set~$S$ of more than  $2^{-c}\cdot 2^{N}$ strings $u$ of length~$N$ such that the formula $\MLR_c(u)\to \phi$ is provable for all $u \in S$, where $\MLR_c(u)$ says that $\K(v)\ge |v|-c$ for every prefix $v$ of $u$.

We can then design a proof strategy in the sense of Section~\ref{sec:prob-proof-strategy}. This strategy starts with capital $2^{-c}$, then proves in $\PA$ that there are at least $(1-2^{-c}) \cdot 2^{N}$ strings $u$ of length~$N$ which make $\MLR_c(u)$ true (this statement is used to prove Levin--Schnorr theorem relating Martin-L\"of randomness and prefix complexity, and is provable in $\PA$). Then the strategy picks a string $u$ of length~$N$ at random and adds the axiom $\MLR_c(u)$. By assumption on the cardinality of $S$, with probability greater than $2^{-c}$ we can prove $\varphi$. By Theorem~\ref{conservation}, this would mean that $\phi$ is already provable in $\PA$. 
    \qed
\end{proof}

However, this theorem does not exclude the possibility that for some $X$ and~$c$ the theory $\MLR_c(X)$ is powerful, even powerful enough to prove all true $\mathrm{\Pi}_1$-statements. The next theorem rules out this possibility. 

\begin{thm}\label{mlr-c}
If the theory $\MLR_c(X)$ is consistent, it does not prove all true $\mathrm{\Pi}_1$-statements. 
\end{thm}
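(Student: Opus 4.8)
The plan is to prove the statement in the following contrapositive form: for a suitable true $\mathrm{\Pi}_1$-statement $\phi$ that is unprovable in $\PA$, I will exhibit a complete consistent theory $T\supseteq\MLR_c(X)$ with $\lnot\phi\in T$; since $\phi$ is true, this witnesses $\MLR_c(X)\not\vdash\phi$, so $\MLR_c(X)$ cannot prove all true $\mathrm{\Pi}_1$-statements. First I would record that consistency of $\MLR_c(X)$ forces every axiom $\K(X\uh n)\ge n-c$ to be true (a false $\mathrm{\Pi}_1$-axiom has a $\PA$-provable negation, which would make the theory inconsistent), so by the Levin--Schnorr theorem $X$ is Martin-L\"of random. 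It is worth stressing why a purely computability-theoretic attempt fails on its own: a theory proving all true $\mathrm{\Pi}_1$-statements is c.e.\ in $X$ and hence gives $\mathbf{0}'\leq_T X$, so one is tempted to pick a low member of a $\mathrm{\Pi}^0_1$-class as in the remark after Theorem~\ref{a_lot_is_not_enough}. But this cannot suffice, since $\MLR_c(X)$ is consistent for some $X\equiv_T\mathbf{0}'$ (e.g.\ $X=\Omega$); the randomness of $X$, not merely its Turing degree, must be used.

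The construction mirrors the proof of Theorem~\ref{thm:half-complexity-not-enough}. Fix a true $\mathrm{\Pi}_1$-statement $\phi$ unprovable in $\PA$ (say $\mathrm{Con}(\PA)$), and let $\mathcal{C}$ be the class of codes of complete consistent extensions of $\PA\cup\{\lnot\phi\}$; this is a nonempty $\mathrm{\Pi}^0_1$-class. Since $X$ is Martin-L\"of random, the basis-for-randomness theorem yields some $Z\in\mathcal{C}$ such that $X$ is Martin-L\"of random relative to $Z$, i.e.\ $\K^Z(X\uh n)\ge n-c'$ for all $n$ and some constant $c'$. Let $T$ be the complete theory coded by $Z$, so that $\lnot\phi\in T$. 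As in Theorem~\ref{thm:half-complexity-not-enough}, $T$ declares a version $\K_T$ of prefix complexity that is computable from $Z$ and satisfies the quantitative restrictions, whence $\K^Z\leq^+\K_T$ and therefore $\K_T(X\uh n)\ge n-c''$ for all $n$, with $c''=c'+O(1)$. Consequently $T$ proves every axiom $\K(X\uh n)\ge n-c''$, i.e.\ $T\supseteq\MLR_{c''}(X)$; being consistent and containing $\lnot\phi$, it shows $\MLR_{c''}(X)\not\vdash\phi$.

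The main obstacle is the bookkeeping of this constant. The step above produces a possibly larger $c''$, and since the theories $\MLR_c(X)$ get \emph{stronger} as $c$ \emph{decreases}, bounding $\MLR_{c''}(X)$ only settles the weaker theories $\MLR_c(X)$ with $c\ge c''$. The difficulty is concentrated at the threshold — the strongest consistent theory $\MLR_{c_{\min}(X)}(X)$ — which relativization cannot reach directly, because passing to an oracle can only lower prefix complexity and hence can only inflate the randomness deficiency. The crux is therefore to run the basis-for-randomness step while \emph{preserving} the deficiency up to an absolute additive constant, so that $Z$ may be chosen with $\K^Z(X\uh n)\ge n-c$ for (essentially) the \emph{given} $c$; equivalently, one must check that the theory $T$ certifies incompressibility of the prefixes of $X$ at the same accuracy as the axioms of $\MLR_c(X)$. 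Once the constant is controlled so that $T\supseteq\MLR_c(X)$, the argument closes: $T$ is a consistent extension of $\MLR_c(X)$ omitting the true statement $\phi$, so $\MLR_c(X)$ does not prove all true $\mathrm{\Pi}_1$-statements.
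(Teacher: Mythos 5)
There is a genuine gap, and it is exactly the one you flag yourself at the end: the control of the constant. Your argument produces, from the basis-for-randomness theorem, some $Z$ coding a complete consistent extension $T$ of $\PA\cup\{\lnot\phi\}$ with $\K^Z(X\uh n)\ge n-c'$, hence $T\supseteq\MLR_{c''}(X)$ for some $c''$ that may well exceed the given $c$; since decreasing $c$ \emph{strengthens} $\MLR_c(X)$, this says nothing about the theory you were asked about. Worse, the direct route does not appear salvageable as stated: a complete theory $T$ must declare \emph{some} value for $\K(X\uh n)$ for every $n$, and nothing in the basis theorem prevents it from declaring a value below $n-c$ for a few $n$, in which case $T$ is outright inconsistent with $\MLR_c(X)$ and cannot serve as the desired extension. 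So the "crux" you name is not a bookkeeping issue to be checked but the actual mathematical content of the theorem, and your proposal does not supply it.

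The paper closes this gap by abandoning the goal of exhibiting a consistent completion of $\MLR_c(X)\cup\{\lnot\phi\}$ and arguing by contradiction through Turing degrees instead. Assume $\MLR_c(X)$ is consistent and proves all true $\mathrm{\Pi}_1$-statements; then (as you also observe) $X\ge_T\mathbf{0}'$. Take a complete consistent extension $T$ of $\PA$ (not of $\PA\cup\{\lnot\phi\}$), of degree $\aaa$, relative to which $X$ is random, and let $\HH(n)$ be the value $T$ assigns to $\K(n)$. One gets $\K^{\aaa}-O(1)\le \HH\le \K$: the upper bound because a true statement ``$\K(n)<k$'' is $\PA$-provable, the lower bound from $\sum_n 2^{-\HH(n)}\le 1$ and the coding theorem. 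The key point you are missing is then the ample-excess property $\K^{\aaa}(X\uh n)-n\to+\infty$ for $\aaa$-random $X$: it forces $\HH(X\uh n)\ge n-c$ for the \emph{given} $c$ for all but finitely many $n$, and the finitely many exceptional axioms are simply true (by consistency of $\MLR_c(X)$) and are added by hand. The resulting theory $T'$ (all axioms $\K(u)\ge|u|-c$ for $u$ with $\HH(u)\ge|u|-c$, plus the finitely many exceptional prefixes) consists only of true statements, is $\aaa$-computable, and extends $\MLR_c(X)$ at the exact constant $c$. If it proved all true $\mathrm{\Pi}_1$-statements we would get $\aaa\ge_T\mathbf{0}'$, so $X$ would be $2$-random; but a $2$-random cannot compute $\mathbf{0}'$, contradicting $X\ge_T\mathbf{0}'$. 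Your write-up has several of these ingredients (the $\mathbf{0}'$ observation, the complete extension declaring complexity values, the coding-theorem comparison), but without the $\HH\le\K$ inequality combined with ample excess at the given $c$, and without the degree-theoretic endgame, the proof does not go through.
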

\begin{proof}
We start by the following observation. Suppose that for some random~$X$ and for some $c$ theory $\MLR_c(X)$ proves all $\mathrm{\Pi}_1$-statements while being consistent. Then, using~$X$ as an oracle, one can enumerate all theorems of $\MLR_c(X)$, so with oracle $X$ one can decide which $\mathrm{\Pi}_1$-statements are true and which are false (false $\mathrm{\Pi}_1$-statements can be enumerated without oracle), i.e., $X$ computes the halting problem. So we now can make use of this additional information.
 
Let~$X$ be a random sequence such that $\K(u)\ge |u|-c$ for all prefixes of~$X$, and~$X$ computes $\mathbf{0}'$ (the halting problem). Identifying complete arithmetical theories with infinite sequences (where the value of the $n$-th bit is~$1$ if and only if the $n$-th arithmetical formula, for some canonical order, is in the theory), we see that the set of complete and consistent extensions of $\PA$ is a non-empty $\mathrm{\Pi}^0_1$ class. The Turing degrees of the elements of this $\mathrm{\Pi}^0_1$ class are commonly referred to as $\mathsf{PA}\text{-degrees}$. By the low basis theorem for randomness (see~\cite[Proposition~7.4]{DowneyHMN2005} or~\cite[Theorem 8.7.2]{DowneyH2010}), there exists a complete and consistent extension $T$ of $\PA$ such that $X$ is Martin-L\"of random relative to $T$. Since $T$ is complete and consistent, for each~$n$ there is a unique value~$k_n$ such that $T \vdash ``\K(n)=k_n"$. Let $\HH: \N \rightarrow \N$ be the function $n \mapsto k_n$. Let us call $\aaa$ the Turing degree of $T$ and let us make three observations. First~$\HH$ is computable relative to $\aaa$. Second, we must have $\HH(n) \leq \K(n)$: indeed, if $\K(n) < \HH(n)=k_n$, then the statement ``$\K(n)<k_n$" is true and therefore provable in $\PA$, and a fortiori in~$T$, a contradiction with the definition of $k_n$. Third, since it is provable in $\PA$ that $``\sum_n 2^{-\K(n)} \leq 1"$, this must also hold in $T$ and therefore $\sum_n 2^{-\HH(n)} \leq 1$ (Note that these inequalities can be considered as statements about finite sums.). Since~$\HH$ is computable relative to $\aaa$, Levin's coding theorem indicates that $\K^\aaa \leq \HH+O(1)$. To sum up: $\K^\aaa \leq \HH+O(1) \leq \K+O(1)$. Since $X$ is $\aaa$-random and $\HH \geq \K^\aaa -O(1)$, we know that $\HH(X \uh n)-n \rightarrow +\infty$ (here we use the fact that every $\aaa$-Martin-L\"of random~$X$ has the property $\K^\aaa(X \uh n)-n \rightarrow +\infty$, see~\cite{Chaitin1987} for the unrelativized version) so there is an $N$ such that $\HH(X \uh n) \geq n-c$ for all $n \geq N$. 

Now consider the set of strings $R=\{u : \HH(u) \geq |u|-c\}$.
This set is $\aaa$-computable and contains all initial segments of $X$, except (possibly) the first~$N$. Now, consider $\PA$ with additional axioms $\K(u) \geq |u|-c$ for all prefixes $u$ of $X$ and for all $u\in R$.
This theory (called $T'$ in the sequel) is $\aaa$-computable because~$R$ is $\aaa$-computable and we add only finitely many axioms for prefixes of $X$. Moreover, it \emph{extends} $\MLR_c(X)$ and all $T'$-theorems are true (recall that $\HH \leq K$ by construction). If $\MLR_c(X)$ proved all true $\mathrm{\Pi}_1$-statements, then so would the stronger theory $T'$, hence $T'$ would compute $\mathbf{0}'$. But $T'$ is $\aaa$-computable so this would yield $\aaa \geq_T \mathbf{0}'$. This is impossible: our initial assumption is that $X$ is $\aaa$-random; if $\aaa \geq_T \mathbf{0}'$, then $X$ would be $\mathbf{0}'$-random (a.k.a. $2$-random), but no $\mathbf{0}'$-random sequence can compute $\mathbf{0}'$ (see for example~\cite[Theorem 5.3.16]{Nies2009}), and $X$ does compute $\mathbf{0}'$ by our initial remark. \qed
\end{proof}

\subsection{The theories $\MLR_c'(X)$ and $\MLR_c''(X)$}

Another possibility is to extend the language of \PA\ by a unary functional symbol~$f$. Then we can add one axiom saying that $f(0)f(1)\ldots$ is a binary sequence whose prefixes of every length $n$ have complexity at least ${n-c}$ (note that one single formula is now enough to claim that all strings $f(0)f(1)\ldots f(n)$ have complexity at least $n-c$), and a series of axioms that specify the elements of the sequence: $f(0)=x_0$, $f(1)=x_1$, etc. To make this theory reasonable, we also need to add induction over formulas that contain $f$ (just to prove that $f$ has some prefix of each length). Evidently, this theory, which we denote by $\MLR'_c(X)$, proves all the axioms of $\MLR_c(X)$. 

Moreover, it proves some statements that look stronger than $\MLR_c(X)$. Indeed, for each $n$ we can prove in $\MLR'_c(X)$ the statement $\mathsf{Ext}_c(X \uh n)$ where $\mathsf{Ext}_c(u)$ says that for every $n\ge|u|$ string $u$ is a prefix of some string $v$ of length $n$ such that for every prefix $w$ of $v$ the inequality $\K(w)\ge |w|-c$ holds. (Let $v$ be the prefix of $f(0)f(1)\ldots$ of length $n$.)

So, adding all statements $\mathsf{Ext}_c(X \uh n)$  to \PA, we get some intermediate theory between $\MLR_c(X)$ and $\MLR'_c(X)$ which we denote $\MLR''_c(X)$. Note that this theory, unlike $\MLR'_c(X)$, does not contain additional functional symbol $f$. It is natural to ask how these three theories compare to one another. Below are some answers:

\begin{thm}\label{pseudo_equivalence_between_MLR_notions}
   For every $X$ and $c$, the theory $\MLR'_c(X)$ is a conservative extension of $\MLR''_c(X)$: both theories prove the same arithmetical formulas \textup(without~$f$\textup).  
\end{thm}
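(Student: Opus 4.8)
The plan is to prove conservativity by a model-theoretic expansion argument: I will show that any sufficiently saturated model of $\MLR''_c(X)$ in which a given $f$-free sentence $\phi$ fails can be expanded, by interpreting the new symbol $f$, to a model of $\MLR'_c(X)$. Since the excerpt already establishes that $\MLR'_c(X)$ proves every axiom of $\MLR''_c(X)$, only conservativity is at issue. So suppose $\phi$ is an $f$-free sentence with $\MLR''_c(X)\not\vdash\phi$, and fix an $\aleph_1$-saturated model $M\models \MLR''_c(X)\cup\{\lnot\phi\}$. It suffices to define $f^M\colon M\to M$ with $(M,f^M)\models\MLR'_c(X)$: then $(M,f^M)\models\lnot\phi$ as well (as $\phi$ is $f$-free), whence $\MLR'_c(X)\not\vdash\phi$. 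Writing $\mathsf{Good}(v)$ for ``every prefix $w$ of $v$ satisfies $\K(w)\ge|w|-c$'', I need $f^M$ to be a binary sequence satisfying three things: (i) $f^M(\overline{i})=\overline{x_i}$ for every standard $i$ (the value axioms, i.e.\ $f^M\uh \overline n=\overline{X\uh n}$ for standard $n$); (ii) $\mathsf{Good}(f^M\uh m)$ for all $m\in M$ (the single complexity axiom $\forall m\,\K(f\uh m)\ge m-c$); and (iii) the full induction scheme in the language with $f$.

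The engine of the construction is the following fact, provable in $\PA$: $\mathsf{Ext}_c(u)\to \mathsf{Ext}_c(u{}^\frown0)\lor \mathsf{Ext}_c(u{}^\frown1)$. Indeed, if both children failed, pick lengths $m_0,m_1$ beyond which neither child has a good extension, set $m=\max(m_0,m_1)$, and take a good $v\succeq u$ of length $m$ guaranteed by $\mathsf{Ext}_c(u)$; whichever child $v$ passes through, the length-$m_i$ prefix of $v$ is good (a prefix of a good string is good) and extends that child, a contradiction. Combined with the trivial implications $\mathsf{Ext}_c(v)\to\mathsf{Good}(v)$ and ``prefixes of good strings are good'', this lets one follow a \emph{leftmost extendible branch} from any extendible node: define by primitive recursion $g(k{+}1)=g(k)^\frown b_k$, with $b_k=0$ if $\mathsf{Ext}_c(g(k)^\frown0)$ and $b_k=1$ otherwise, which $\PA$ proves total and which keeps every $g(k)$ extendible, hence good. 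Crucially this branch is $\PA$-definable from its starting node, which is what will let induction transfer.

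The remaining point is to start the branch at a node that already contains all of $X$. Here I invoke saturation. Consider the type $p(s)=\{\mathsf{Ext}_c(s)\}\cup\{\overline{X\uh n}\preceq s : n\in\N\}$ over the countably many parameters $\overline{X\uh n}$. It is finitely satisfiable in $M$: a finite subset mentions a longest prefix $\overline{X\uh{n_0}}$, and $s=\overline{X\uh{n_0}}$ works, since $M\models \mathsf{Ext}_c(\overline{X\uh{n_0}})$ is an axiom of $\MLR''_c(X)$ and $\overline{X\uh n}\preceq \overline{X\uh{n_0}}$ for $n\le n_0$. By $\aleph_1$-saturation $p$ is realized by some $s\in M$; as $s$ extends $\overline{X\uh n}$ for all standard $n$, its length $N=|s|$ is nonstandard. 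Now define $f^M$ by $f^M\uh m = s\uh m$ for $m\le N$ and by the leftmost extendible branch out of $s$ for $m> N$ (its values lie in $\{0,1\}$ since $s$ is binary and the branch appends bits). Then $f^M$ is definable in $M$ with the single parameter $s$, so every $f$-formula is equivalent over $(M,f^M)$ to an arithmetic formula with parameter $s$; since $M$ satisfies the $\PA$ induction scheme for all such formulas, $(M,f^M)$ satisfies induction in the expanded language, giving (iii). Requirement (ii) holds because $\mathsf{Ext}_c(s)$ makes every prefix of $s$ good while the leftmost branch keeps all longer prefixes extendible, hence good; and (i) holds because $f^M\uh \overline n=s\uh\overline n=\overline{X\uh n}$ for every standard $n<N$. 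Thus $(M,f^M)\models\MLR'_c(X)$, completing the proof.

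I expect the main obstacle to be reconciling the two competing demands on $f$: it must pass through the externally given sequence $X$ (on the standard cut, which is not definable in $M$) while remaining definable enough that the expanded structure still validates induction. The device that resolves this is the combination of the saturation step, which absorbs all the standard data about $X$ into one nonstandard parameter $s$, with the leftmost-extendible-branch lemma, a $\PA$-provable surrogate for K\"onig's lemma made available precisely because $\mathsf{Ext}_c$ asserts ``good extensions of every length exist'' rather than ``an infinite good branch exists''.
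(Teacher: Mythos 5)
Your proof is correct, and its engine is the same as the paper's: expand a model of $\MLR''_c(X)\cup\{\lnot\phi\}$ by interpreting $f$ as the leftmost extendible branch through the tree of strings all of whose prefixes $w$ satisfy $\K(w)\ge|w|-c$ (using the $\PA$-provable step $\mathsf{Ext}_c(u)\to\mathsf{Ext}_c(u^\frown 0)\lor\mathsf{Ext}_c(u^\frown 1)$ as a surrogate for K\"onig's lemma), and recover induction in the expanded language from the definability of $f$. Where you genuinely diverge is in how the infinitely many value axioms $f(\bar\imath)=\overline{x_i}$ are absorbed. The paper argues in the other contrapositive direction: a proof of $\phi$ from $\MLR'_c(X)$ mentions only finitely many value axioms, say up to index $N$, so it suffices to expand an \emph{arbitrary} model of $\MLR''_c(X)\cup\{\lnot\phi\}$ to a model of the finite restriction of $\MLR'_c(X)$, rooting the branch at the standard node $\overline{x_0\ldots x_N}$, whose extendibility is literally one of the $\MLR''_c(X)$ axioms. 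You instead pass to an $\aleph_1$-saturated model and realize the type $\{\mathsf{Ext}_c(s)\}\cup\{\overline{X\uh n}\preceq s : n\in\N\}$, whose finite satisfiability is again given by the $\MLR''_c(X)$ axioms; the resulting nonstandard node $s$ sits above the entire standard part of $X$, so the branch rooted at $s$ satisfies \emph{all} value axioms and you get a model of the full theory $\MLR'_c(X)$ in one shot. The trade-off is clear: the paper's route is more elementary (no saturation, works in any model) but must invoke the finiteness of proofs; yours needs the extra model-theoretic machinery but yields the cleaner intermediate fact that the full $\MLR'_c(X)$ is satisfiable over any suitably saturated model of $\MLR''_c(X)$, with conservativity falling out directly. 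Both arguments are sound; the only points worth making explicit in a write-up are that $s$ should be required (in the type) to be a binary string, and that the recursion defining the branch, though it tests a $\mathrm{\Pi}_1$ condition at each step, is still given by a single arithmetical formula with parameter $s$, so full induction in $M$ applies to it.
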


\begin{proof}
Assume that $\MLR'_c(X)\vdash \varphi$ and the formula $\varphi$ does not involve the symbol~$f$. We need to prove that $\MLR''_c(X) \vdash \varphi$.  Only finitely many axioms $f(i)=x_i$ are involved in the proof of~$\varphi$ from $\MLR'_c(X)$. Let $N$ be the maximal $i$ that appears in these axioms. Now suppose that $\MLR''_c(X)$ does not prove $\varphi$. Consider a model $\mathfrak{M}$ of $\MLR''_c(X) \cup \{\neg \varphi\}$. We want to interpret $f$ in this model and get a model of $\MLR'_c(X)$ restricted to axioms ``$f(n)=x_n$" for $n \leq N$, in which $\varphi$ is false, thus contradicting the assumption. Note that $\mathfrak{M}$ may be a non-standard model.

Consider the string $x=x_0\ldots x_N$ and all its extensions $y$ with the following property: all prefixes $v$ of $y$ satisfy the inequality $\K(v)\ge |v|-c$. If the axioms of $\MLR_c''(X)$ are true in the standard model, these $y$ form a subtree which is infinite. K\"onig Lemma then guarantees that this subtree has an infinite branch. Moreover, we get a definable branch if we take on each level the leftmost vertex in the subtree  that has arbitrarily long extensions in the subtree. Now we can formalize this argument in \PA\ and observe that the same formula defines some branch (i.e., function $f$) in $\mathfrak{M}$, and all vertices on this branch satisfy the inequality $\K(v)\ge |v|-c$ in $\mathfrak{M}$. This allows us to extend $\mathfrak{M}$ to a model of (restricted) $\MLR_c'(X)$, where $\phi$ is false, thus contradicting our assumption. (Note that induction for formulas containing $f$ is possible in $\mathfrak{M}$ since $f$ is definable.)
   %
\qed
\end{proof}


In contrast to Theorem~\ref{mlr-c}, there \emph{is} a theory $\MLR'_c(X)$ which is consistent \emph{and proves all true $\mathrm{\Pi}_1$-statements}. 
%

\begin{thm}\label{random_number_in_model_2_for_PA}
For every~$c$, there exists a sequence~$X$ such that $\MLR''_c(X)$ is consistent and proves all true $\mathrm{\Pi}_1$-statements. 
\end{thm}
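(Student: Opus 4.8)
The plan is to construct $X$ explicitly as the leftmost infinite branch of a suitable $\mathrm{\Pi}^0_1$ tree, and then to show that the extendibility axioms, \emph{together with the} $\Sigma_1$ \emph{facts that are automatically provable in} $\PA$, let the theory pin down this branch precisely enough to recover a modulus for the halting problem. Fix $c$; we may assume the set of $c$-random sequences is non-empty, since this is necessary for $\MLR''_c(X)$ to be consistent for any $X$. Let $T_c=\{u\in\fs: \K(w)\ge |w|-c \text{ for every prefix } w \text{ of } u\}$; this is a co-c.e.\ (hence $\mathrm{\Pi}^0_1$) tree whose infinite branches are exactly the $c$-random sequences. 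Let $E_c=\{u\in T_c : u \text{ has extensions in } T_c \text{ of every length}\}$ be its subtree of extendible nodes, so that $\mathsf{Ext}_c(u)$ holds iff $u\in E_c$. The point is that $E_c$ is again $\mathrm{\Pi}^0_1$: a node is \emph{non}-extendible iff at some finite level \emph{all} of its extensions have already dropped out of $T_c$, and this is a $\Sigma_1$ event, being built from the c.e.\ conditions $\K(w)<|w|-c$. I would take $X$ to be the leftmost branch of $E_c$.

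Since $X$ is a branch of $T_c$, every prefix $X\uh n$ lies in $T_c$ and is extended by $X$ itself, so all axioms $\mathsf{Ext}_c(X\uh n)$ are true and $\MLR''_c(X)$ is consistent. The key extra feature of the leftmost branch is that for every $n$ each string $u<_{\mathrm{lex}}X\uh n$ of length $n$ is non-extendible, and non-extendibility is a \emph{true} $\Sigma_1$ statement, hence provable in $\PA$. Combining these $\Sigma_1$ theorems with the axiom $\mathsf{Ext}_c(X\uh n)$ (whose numeral for $X\uh n$ is available to the theory), $\MLR''_c(X)$ proves that $X\uh n$ \emph{is} the leftmost extendible node of length $n$. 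Moreover, from the $\Sigma_1$ proofs it extracts an explicit stage number $S(n)$ by which every left-neighbour of $X\uh n$ has been \emph{confirmed} non-extendible in the c.e.\ enumeration of the complement of $T_c$; the value of $S(n)$ is therefore provable in $\MLR''_c(X)$.

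Now consider the computable procedure $P(n,m)$ that runs the enumeration of non-extendible nodes for $m$ steps and outputs the leftmost length-$n$ node not yet confirmed non-extendible. For every $m\ge S(n)$ all left-neighbours of $X\uh n$ are confirmed while $X\uh n$, being genuinely extendible, is never confirmed, so $P(n,m)=X\uh n$. Hence $X\uh n$ is computable from $n$ and any $m\ge S(n)$, which gives $\K(m)\ge \K(X\uh n)-O(\log n)\ge n-c-O(\log n)$ for all such $m$. This is exactly the ingredient used in Theorem~\ref{thm:all-complexities}: any terminating program of length at most $n-c-O(\log n)$ must halt in fewer than $S(n)$ steps, since otherwise its halting time would be a number $\ge S(n)$ of complexity below its own length. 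As in the proofs of Theorems~\ref{thm:all-complexities} and~\ref{thm:finite-power}, this whole argument formalizes: using the provable value of $S(n)$, $\MLR''_c(X)$ proves that every short program either halts within $S(n)$ steps or never halts, and therefore proves non-termination for all non-terminating short programs. Letting $n\to\infty$ covers every program, and since each true $\mathrm{\Pi}_1$-statement is provably equivalent to the non-termination of some program, $\MLR''_c(X)$ proves all true $\mathrm{\Pi}_1$-statements.

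The step I expect to be the main obstacle is the faithful formalization inside $\MLR''_c(X)$, rather than the underlying computability idea (which is essentially the fact that the leftmost branch of $E_c$, being the supremum of a computable right-moving approximation, is a left-c.e.\ random real and hence computes $\mathbf{0}'$). One must check that the three ingredients are genuinely \emph{provable}, not merely true: that $\PA$ proves the $\Sigma_1$ non-extendibility of each left-neighbour \emph{and} extracts a uniform confirmation stage $S(n)$; that the equality $P(n,m)=X\uh n$ for $m\ge S(n)$ and the resulting complexity lower bound are derivable from the $\mathsf{Ext}_c(X\uh n)$ axiom together with these $\Sigma_1$ theorems; and that the halting-time bound is then provable exactly as in Theorem~\ref{thm:all-complexities}. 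Care is also needed so that the interaction between the extendibility axiom for $X\uh n$ and the $\Sigma_1$ facts about its neighbours is consistent in every model, which it is because all of these statements are true in the standard model.
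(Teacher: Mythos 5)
Your proposal is correct and follows essentially the same route as the paper's own proof: both take $X$ to be the leftmost branch of the $\mathrm{\Pi}^0_1$ tree of extendible $c$-random prefixes, use the provability in $\PA$ of the true $\Sigma_1$ non-extendibility facts for the lexicographically smaller strings to pin down a stage $S(n)$, derive that any $m\ge S(n)$ has complexity about $n$, and then conclude as in Theorem~\ref{thm:finite-power}. The only difference is that you spell out the formalization steps in somewhat more detail than the paper does.
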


\begin{proof}
In this proof we use again the ideas from the proof of Theorem~\ref{thm:finite-power}. 

Let  
\[
\D_c=\{ X \in \cs \mid \forall n\;  \K(X\upharpoonright n) \geq n-c\}
\]
Note that the set of all finite prefixes of strings in $\D_c$ is a co-c.e.\ set. Denote by~$Z$ be the leftmost path of $\D_c $ ($Z$ can be seen as a Chaitin $\mathrm{\Omeg}$ number.) 

By definition theory $\MLR''_c(Z)$ consists of  axioms $\mathsf{Ext}_c(Z \uh n)$. For each~$n$,  $Z\upharpoonright n$ is the first (in lexicographic order) string~$\sigma$ of length~$n$ such that $\mathsf{Ext}_c(\sigma)$ holds. Since $\mathsf{Ext}_c$ is a $\mathrm{\Pi}_1$ predicate,  we can enumerate the strings~$\tau$ such that $\neg \mathsf{Ext}_c(\tau)$. Denote by $t$ the step in this enumeration when we get the list of all  strings smaller than $Z \uh n$. Given $c$ and  $Z \uh n$ we can compute this number $t$. Moreover, given $\mathsf{Ext}_c(\tau)$ we can \emph{prove} that any time $s \geq t$ has complexity at least~$n-O(\log n)$ (similar to the proof Theorem \ref{thm:finite-power}). The rest of the proof is identical to Theorem \ref{thm:finite-power}. \qed
\end{proof}

However, the theories $\MLR''_c(X)$ are still weak in the sense of Theorem~\ref{mlr-asympt-c}: if a formula $\phi$ is provable in all theories $\MLR''_c(X)$ for a given~$X$, then $\phi$ is already provable in $\PA$. The proof is identical. 

This has the following interesting corollary. 

\begin{cor}
Let $\MLR''(X)$ be the statement: $(\exists c)\; \MLR''_c(X)$ (which can be made in $\PA$). Then $\MLR''(X)$ is conservative over $\PA$.
\end{cor}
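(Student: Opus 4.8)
The plan is to reduce the conservativity of the single sentence $\MLR''(X)=(\exists c)(\forall n)\,\mathsf{Ext}_c(X\uh n)$ over $\PA$ to the ``weak'' statement established just above (the $\MLR''$-analogue of Theorem~\ref{mlr-asympt-c}): a formula provable in \emph{all} the theories $\MLR''_c(X)$ is already provable in $\PA$. The bridge between a single existentially quantified axiom and the whole family $\{\MLR''_c(X)\}_c$ will be the intermediate theory $\MLR'_c(X)$ together with its conservativity over $\MLR''_c(X)$ (Theorem~\ref{pseudo_equivalence_between_MLR_notions}). Throughout, ``$X\uh n$'' is formalized via the function symbol $f$, exactly as in the definitions of $\MLR'_c(X)$ and $\MLR''_c(X)$.

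First I would observe that for every $c$ the theory $\MLR'_c(X)$ proves $\MLR''(X)$. Indeed, in $\MLR'_c(X)$ the symbol $f$ denotes a total sequence all of whose prefixes $f\uh k$ satisfy $\K(f\uh k)\ge k-c$; hence for every $n$ the prefix $f\uh n$ extends, inside $f$ itself, to arbitrarily long strings whose prefixes all have complexity at least their length minus $c$, which is precisely $(\forall n)\,\mathsf{Ext}_c(f\uh n)$, and a fortiori $(\exists c')(\forall n)\,\mathsf{Ext}_{c'}(f\uh n)$, i.e.\ $\MLR''(X)$. The key point is that $\MLR'_c(X)$, unlike the schema $\MLR''_c(X)$, can discharge the universal quantifier over $n$, because $f$ is a genuine function symbol subject to induction.

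Next, suppose $\phi$ is a sentence in the language of $\PA$ (with no occurrence of $f$) and $\PA\cup\{\MLR''(X)\}\vdash\phi$. Since $\MLR'_c(X)$ extends $\PA$ and proves $\MLR''(X)$, it proves $\phi$ as well, and this holds for every $c$. Because $\phi$ does not mention $f$, Theorem~\ref{pseudo_equivalence_between_MLR_notions} (conservativity of $\MLR'_c(X)$ over $\MLR''_c(X)$) yields $\MLR''_c(X)\vdash\phi$ for every $c$. Applying the weak statement quoted above, $\phi$ is then provable in all $\MLR''_c(X)$, hence $\PA\vdash\phi$, which is exactly the asserted conservativity.

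The main obstacle is precisely the passage from ``provable from the single sentence $\MLR''(X)$'' to ``provable in every schema-theory $\MLR''_c(X)$'': one cannot argue directly, because $\MLR''_c(X)$ only supplies the instances $\mathsf{Ext}_c(X\uh\bar n)$ for standard numerals $\bar n$ and need not prove the genuinely universal $(\forall n)\,\mathsf{Ext}_c(X\uh n)$ (a nonstandard instance may fail in a model of the schema). Routing through $\MLR'_c(X)$ is what resolves this: the function symbol lets that theory prove the universal statement outright, while Theorem~\ref{pseudo_equivalence_between_MLR_notions} guarantees that nothing is lost on $f$-free consequences. (As in the preceding theorem, one assumes $X$ is Martin-L\"of random, so that all the theories involved are consistent and the weak statement is applicable.)
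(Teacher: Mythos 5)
Your proof is correct, and at the top level it performs the same reduction as the paper's: everything comes down to the remark made just before the corollary, that a formula provable in every theory $\MLR''_c(X)$ is already provable in $\PA$. Where you genuinely diverge is in the bridge between the single sentence $\MLR''(X)$ and the family $\{\MLR''_c(X)\}_c$. The paper argues by contraposition and simply asserts that $\MLR''_d(X)$ proves $(\exists c)\,\MLR''_c(X)$, so that a $d$ with $\MLR''_d(X)\not\vdash\phi$ immediately witnesses $\PA\cup\{\MLR''(X)\}\not\vdash\phi$. You instead route through $\MLR'_c(X)$: that theory proves the genuinely universal $(\forall n)\,\mathsf{Ext}_c(f\uh n)$ outright (using the function symbol $f$ and the extended induction), hence proves $\MLR''(X)$ and therefore $\phi$, and Theorem~\ref{pseudo_equivalence_between_MLR_notions} then pushes $\phi$ down to $\MLR''_c(X)$. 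This detour buys something real: as you observe, $\MLR''_d(X)$ only supplies the instances $\mathsf{Ext}_d(X\uh \bar n)$ at standard numerals, and it is not evident that such a schema proves the universally quantified sentence --- indeed, to even write $\MLR''(X)$ one apparently needs the symbol~$f$, which is absent from the language of $\MLR''_d(X)$ --- so the paper's one-line justification glosses over exactly the point you repair. Two small caveats, both of which you at least partially address: the argument requires $X$ to be such that the theories $\MLR''_c(X)$ are consistent (true) for all sufficiently large $c$, a hypothesis the corollary's statement omits; and ``conservative over $\PA$'' must be read as ``no new $f$-free theorems,'' which is precisely the form in which Theorem~\ref{pseudo_equivalence_between_MLR_notions} is available.
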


\begin{proof}
From the above discussion, if $\phi$ is a formula without constant~$f$ that is not provable in $\PA$, then there exists a constant~$d$ such that $\MLR''_d(X)$ does not prove~$\phi$. Since $(\exists c)\, \MLR''_c(X)$ is provable from $\MLR''_d(X)$, it follows that $(\exists c)\; \MLR''_c(X)$ does not prove $\phi$. \qed
\end{proof}

\subsection{Initial segment complexity of nonrandom sequences}

In Theorem \ref{mlr-c} we proved that if $X$ is a Martin-L\"of sequence such that the axioms ``$\K(X \uh n)\geq n-c$'' are true for all $n$, then the theory consisting of these axioms does not prove all true $\mathrm{\Pi}_1$-statements. What happens if we consider sequences $X$ that are not random, but for which ``$\K(X \uh n)\geq n-c$'' is still true for infinitely many $n$? In constrast to Theorem \ref{mlr-c}, there is a sequence $X$ and a constant $c$ such that all true axioms of the form ``$\K(X \uh n)\geq n-c$'' prove all true $\mathrm{\Pi}_1$-statements.

\begin{thm}
\label{nonrandomsequence}
Fix some constant $c \ge 0$. There exists a sequence $X$ and an infinite set $A \subseteq \N$ such that the theory consisting of axioms
\[ \mbox{``$\K(X \uh n) \ge n - c$''} \]
for all $n \in A$ is consistent and proves all true $\mathrm{\Pi}_1$-statements.
\end{thm}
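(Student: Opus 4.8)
The plan is to mimic the construction of the leftmost path~$Z$ of~$\D_c$ used in Theorem~\ref{random_number_in_model_2_for_PA}, but to arrange the path so that it leaves the incompressible regime in long stretches (making~$X$ non-random) while returning to \emph{exact} $c$-incompressibility at a sparse infinite set of checkpoints $A=\{n_1<n_2<\cdots\}$. Concretely, I would build~$X$ together with~$A$ in stages, keeping $X\uh{n_k}$ equal to the leftmost string $\sigma$ of length $n_k$ that extends $X\uh{n_{k-1}}$ and satisfies $\K(\sigma)\ge n_k-c$. All the added axioms are then true by construction, so the resulting theory is automatically consistent; the whole content of the theorem is that it is nonetheless powerful.

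For the power I would reuse the time-bound argument of Theorem~\ref{thm:finite-power}. Since the axiom for $n_k$ literally contains the numeral for $X\uh{n_k}$, the theory knows this string and knows it is $c$-incompressible. Arguing by induction on~$k$, the axioms for $n_1,\ldots,n_{k-1}$ let us pin down $X\uh{n_{k-1}}$ (each is the leftmost $c$-incompressible extension of its predecessor, and leftmostness is provable because the lexicographically earlier competitors are provably compressible, a $\mathrm{\Sigma}_1$ fact); then the axiom for $n_k$ identifies $X\uh{n_k}$ as the leftmost $c$-incompressible length-$n_k$ extension of $X\uh{n_{k-1}}$. From this we provably compute the stage $B(n_k)$ at which all lexicographically earlier extensions have been confirmed compressible, and conclude, exactly as in Theorem~\ref{thm:finite-power}, that every non-halting program of size at most $n_k-O(\log n_k)$ can be proved non-halting. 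Hence the theory proves all true $\mathrm{\Pi}_1$-statements of complexity at most $n_k-O(\log n_k)$, and since~$A$ is infinite it proves all of them.

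For non-randomness I would exploit that the leftmost choice favours leading zeros: after the checkpoint $X\uh{n_{k-1}}$ the path takes as many zeros as are compatible with some length-$n_k$ extension still being $c$-incompressible. A counting argument shows this number of zeros can be taken of order $\log n_k$, so the prefix $X\uh{m}$ at the end of this zero-run has complexity deficiency $m-\K(X\uh m)$ of order $\log n_k\to\infty$. Thus $\limsup_m\,(m-\K(X\uh m))=\infty$ and $X$ is not Martin-L\"of random. This is consistent with the checkpoints being exactly $c$-incompressible, because incompressibility of $X\uh{n_k}$ only forces $\K(X\uh m)\ge m-c-O(\log(n_k-m))$ for $m<n_k$, leaving an $O(\log)$ budget of deficiency to spend between checkpoints.

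The hard part is to make all three requirements hold at once, and the difficulty is precisely the gap between the plain axioms and the $\mathsf{Ext}_c$-axioms. For a random~$X$ the plain axioms are provably too weak (Theorem~\ref{mlr-c}), and the argument there produces a $\PA$-degree~$\aaa$ relative to which~$X$ is random; this obstruction disappears only because our~$X$ is non-random (a non-random sequence is $Z$-random for no~$Z$), which is exactly why we insist on leaving the incompressible regime. Technically, the delicate point is the \emph{existence} at every stage of a $c$-incompressible extension of $X\uh{n_{k-1}}$: incompressibility of the previous checkpoint only guarantees extensions of deficiency $c\pm O(\log)$, so the checkpoints, the injected deficiency, and the sparsity of~$A$ must be balanced so that the deficiency profile returns to $\le c$ at infinitely many lengths while still diverging in between. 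Getting this balance to work simultaneously with provable leftmostness is the crux of the proof.
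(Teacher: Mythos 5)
Your overall strategy --- checkpoint strings whose leftmostness among $c$-incompressible extensions is provable because compressibility is a $\mathrm{\Sigma}_1$ fact, followed by the time-bound argument of Theorem~\ref{thm:finite-power} --- is the right one and is essentially the paper's. But two steps fail as written. First, the claim that the axiom at $n_k$ handles ``every non-halting program of size at most $n_k-O(\log n_k)$, exactly as in Theorem~\ref{thm:finite-power}'' is wrong for your construction: to recover $X\uh{n_k}$ from a long-running program $p$ you must also supply $X\uh{n_{k-1}}$, and that string is itself $c$-incompressible, so it costs about $n_{k-1}$ bits. The contradiction with $\K(X\uh{n_k})\ge n_k-c$ therefore only covers programs of size below $n_k-n_{k-1}-O(\log n_k)$: the budget is the gap between checkpoints, not $n_k$. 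This is repairable by making the gaps grow, but the paper sidesteps it with a device you are missing: between consecutive incompressible checkpoints $x_n$ it deliberately inserts a string $y_{n+1}$ with $\K(y_{n+1})<|y_{n+1}|-(n+1)-c$ and conditions on $y_n$ rather than on the previous checkpoint. Since $\K(y_n)$ undershoots $|y_n|$ by more than $n$, the description of $x_n$ from $(p,y_n)$ has complexity below $|x_n|+|p|-n-c+O(\log|p|)$, giving a budget of $n$ independently of the spacing.

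Second, your argument for non-randomness --- that the leftmost choice forces zero-runs of length $\Theta(\log n_k)$, hence deficiency tending to infinity --- is unsubstantiated: the counting gives only an \emph{upper} bound $O(\log n_k)$ on such runs, and you offer no lower bound showing the deficiency injected between checkpoints exceeds $O(1)$. Fortunately non-randomness is not part of the statement (consistency already follows from the axioms being true by construction), so this is misdirected effort rather than a fatal gap; in the paper the complexity dips are built in by the $y_{n+1}$'s and non-randomness is only a remark afterwards. Likewise the ``delicate point'' you single out --- existence of a $c$-incompressible extension of each checkpoint --- is just the standard fact that every string has a Martin-L\"of random extension $Z$ with $\K(Z\uh m)-m\to\infty$, so all sufficiently long prefixes satisfy the required bound (this is also exactly where prefix complexity is essential; the paper notes the step fails for $\C$). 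The genuine crux is the one you gloss over with ``exactly as in Theorem~\ref{thm:finite-power}'': making the complexity ledger balance when the string you condition on is itself incompressible.
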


\begin{proof}
We order all strings by length and then lexicographically. That is, $\sigma < \tau$ if and only if $|\sigma| < |\tau|$, or $|\sigma| = |\tau|$ and $\sigma$ is lexicographically before $\tau$.

We construct $X$ as follows: let $y_0$ be some string with $\K(y_0) < |y_0| - c$. Inductively, let $x_n$ be the first (for the above order) string $x$ that extends $y_n$ with $\K(x) \ge |x| - c$, and let $y_{n+1}$ be some string extending $x_n$ such that 
\[ \K(y_{n+1}) < |y_{n+1}| - (n+1) - c. \]
Note that $x_n$ must exist, as every string has a Martin-L\"{o}f random extension, and for a Martin-L\"{o}f random sequence $Z$
\[ \lim_{n \to \infty} \left( \K(Z \uh n) - n \right) = \infty. \]

Let $\displaystyle X = \lim_{n \to \infty} x_n$. Consider the axioms
\[ \mbox{``$\K(x_n) \ge |x_n| - c$''} \]
for all $n \in \N$. (That is: $A = \{|x_n| : n \in \N \}$ in the statement of the theorem.) We claim that this theory can prove all true $\mathrm{\Pi}_1$-statements. The proof is similar to the proof of Theorem \ref{thm:finite-power}.

Let $A(p)$ be any algorithm. As in Theorem \ref{thm:finite-power}, it is sufficient to prove that for every input $p$ such that $A(p)$ does not terminate, our theory proves this non-termination.

Define $t_n$ to be the first $t$ such that $\K^t(x) < |x| - c$ for all strings $x$ that extend $y_n$ and come before $x_n$ in our order. We prove that from the length $|p|$ of a program $p$, we can compute a number $n$ such that the computation $A(p)$ either terminates in less than $t_n$ steps, or does not terminate at all.

Every string $p$ determines the number of steps needed for the termination of $A(p)$. Knowing $p$ and $y_n$, we find this number $t(p)$ and take the first~$x$ that extends $y_n$ such that $\K^{t(p)}(x) \ge |x| - c$. If $A(p)$ does not halt within $t_n$ steps, then we know that $x = x_n$. On the other hand, for every $p$ such that $A(p)$ terminates we get some string $x$ extending $y_n$ with $\K(x) <  \K(p) + \K(y_n) + O(1)$. By definition, $y_n$ has a low complexity. Consequently
\begin{align*}
\K(x) &< |p| + |y_n| - n - c + O(\log |p|) \\
     &< |x| + |p| - n - c + O(\log |p|) \\
\end{align*}
Given the program size $|p|$, we can find an $n$ that is large enough such that $|p|-n+O(\log |p|)$ is negative. For such an $n$, we know that~$x$ is different from~$x_n$. Hence, whether or not $x$ differs from $x_n$ or not, determines whether $A(p)$ terminates or not. So if $A(p)$ terminates at all, then it must do so in less than $t_n$ steps.

As in the proof of Lemma \ref{finite-power_lemma}, this reasoning can be formalized in PA. Having ``$\K(x_n) \ge |x_n| - c$'' as an axiom, we can prove that $x_n$ is the first string extending $y_n$ such that $\K(x_n) \ge |x_n| - c$. Then, given $y_n$, we can prove the value of $t_n$. Finally, given $p$ and taking $y_n$ for $n$ suitably large, we can prove (doing the above prove inside PA) that $A(p)$ either terminates in $t_n$ steps or does not terminate at all, as required.
\end{proof}

Remark that the sequence $X$ that we constructed in the proof, has arbitrarily large \emph{complexity dips} in between the initial segments $x_n$ with complexity at least $|x_n| - c$. Hence $X$ is not Martin-L\"{o}f random. This is essential by Theorem \ref{mlr-c}. Indeed, even if we choose a Martin-L\"of random $X$ and a constant $c$ small enough such that $\K(X \uh n) > n-c$ is not true for all $n$, it still must be true for all but finitely many $n$. In this case the proof of Theorem \ref{mlr-c} still works to show that the theory consisting of all true axioms ``$\K(X \uh n) > n-c$'' does not prove all true $\mathrm{\Pi}_1$-statements.

For plain complexity, the proof of Theorem \ref{nonrandomsequence} does not work. The reason is that not every string has an extension with high plain complexity.

\begin{question}
\label{questionforplaincomplexity}
Does there exist a sequence $X$ and a theory $T$ consisting of infinitely many axioms of the form 
\[ \mbox{``$\C(X  \uh n) > n - c$''} \]
such that $T$ is consistent and proves all true $\mathrm{\Pi}_1$-statements?
\end{question}

Note that if there does exists such a sequence $X$, then $X$ must be $2$-random. This makes the question quite different from Theorem \ref{nonrandomsequence}, as the sequence constructed in the proof of the theorem was necessarily non-random, whereas Question \ref{questionforplaincomplexity} relates to the properties of random sequences.

Moreover, remark that, although there are no Turing-complete $2$-random sequences, some corresponding theory $T$ might still be Turing complete.

A summary of this section and related results can be found in Figure \ref{summary}.

\begin{figure}[h]
    \centering
    Does there exist $A \subseteq 2^{< \omega}$ such that we can prove all true $\mathrm{\Pi}_1$ sentences with consistent axioms\ldots
    
    \vspace{0.5em}
    
    \renewcommand{\arraystretch}{2}
    \setlength{\tabcolsep}{0.4em}
    \begin{tabular}{|c|c|c|}
         \cline{2-3}
         \multicolumn{1}{c|}{} &
         \parbox{9em}{\begin{center} ``$\C(\sigma) > |\sigma| - c$'' \\ for $\sigma \in A$ \end{center}}  &
         \parbox{9em}{\begin{center} ``$\K(\sigma) > |\sigma| - c$'' \\ for $\sigma \in A$ \end{center}} \\
         
         \hline
         \parbox{11em}{\begin{center} $A$ contains at most one string of each length. \end{center}} &
          \textbf{Yes} &
          \textbf{Yes} \\
         
         \hline
         \parbox{11em}{\begin{center} $A$ contains infinitely many initial segments of a sequence. \end{center}} &
         \parbox{10em}{\begin{center} \textbf{Maybe} \\ \scriptsize Note: axioms imply that sequence is $2$-random \end{center}} &
          \textbf{Yes} \\
         
         \hline
         \parbox{11em}{\begin{center} $A$ contains all initial segments of a sequence. \end{center}} &
         \cellcolor{lightgray} \parbox{10em}{\begin{center} \scriptsize Axioms are never consistent \end{center}} &
         \parbox{10em}{\begin{center} \textbf{No} \\ \scriptsize Note: axioms imply that sequence is $1$-random \end{center}} \\
         
         \hline
    \end{tabular}
    \caption{Summary of results about the strength of theories whose axioms express that certain strings have high complexities.}
    \label{summary}
\end{figure}

\section{Acknowledgements}

The authors would like to thank all the colleagues with whom they discussed these results. Special thanks go to Ilya~Razenshteyn for bringing Richard Lipton's post~\cite{Lipton-blog} to our attention and to Chris Porter for many helpful comments on preliminary versions of this paper. This work was supported by ANR-08-EMER-008 NAFIT and grant EMC ANR-09-BLAN-0164-01.

\bibliographystyle{alpha}

\bibliography{randomness_in_PA}

%

%

%
%
%
%
%

\end{document}